\DeclarePairedDelimiter{\ceil}{\lceil}{\rceil}
\DeclarePairedDelimiter{\floor}{\lfloor}{\rfloor}
\newsavebox{\@brx}
\newcommand{\llangle}[1][]{\savebox{\@brx}{\(\m@th{#1\langle}\)}%
  \mathopen{\copy\@brx\kern-0.6\wd\@brx\usebox{\@brx}}}
\newcommand{\rrangle}[1][]{\savebox{\@brx}{\(\m@th{#1\rangle}\)}%
  \mathclose{\copy\@brx\kern-0.6\wd\@brx\usebox{\@brx}}}
\newcommand{\VERT}[1][]{\savebox{\@brx}{\(\m@th{#1|}\)}%
 \mathopen{\copy\@brx\kern-0.6\wd\@brx\copy\@brx\kern-0.6\wd\@brx\usebox{\@brx}}}
\newcommand{\VERTT}[1][]{\savebox{\@brx}{\(\m@th{#1|}\)}%
 \mathopen{\copy\@brx\kern-0.6\wd\@brx\copy\@brx\kern-0.6\wd\@brx\copy\@brx\kern-0.6\wd\@brx\usebox{\@brx}}} 
\newcommand{\RECT}[1][]{\savebox{\@brx}{\(\m@th{#1[\hspace{-0.3mm}]}\)}}
\newcommand{\bN}{\mathbb{N}} 
\newcommand{\bE}{\mathbb{E}}
\newcommand{\bZ}{\mathbb{Z}}
\newcommand{\bR}{\mathbb{R}}
\newcommand{\bT}{\mathbb{T}}
\newcommand{\bM}{\mathbb{M}}
\newcommand{\cB}{\mathcal{B}}
\newcommand{\cV}{\mathcal{V}}
\newcommand{\cD}{\mathcal{D}}
\newcommand{\cP}{\mathcal{P}}
\newcommand{\cX}{\mathcal{X}}
\newcommand{\cK}{\mathcal{K}}
\newcommand{\fA}{\mathbf{A}}
\newcommand{\fB}{\mathbf{B}}
\newcommand{\fV}{\mathbf{V}}
\newcommand{\fI}{\mathbf{I}}
\newcommand{\fL}{\mathbf{L}}
\newcommand{\fQ}{\mathbf{Q}}
\newcommand{\fP}{\mathbf{P}}
\newcommand{\fT}{\mathbf{T}}
\newcommand{\fU}{\mathbf{U}}
\newcommand{\fX}{\mathbf{X}}
\newcommand{\fY}{\mathbf{Y}}
\newcommand{\fZ}{\mathbf{Z}}
\newcommand{\frM}{\mathfrak{M}}
\newcommand{\sS}{\mathscr{S}}
\newcommand{\sC}{\mathscr{C}}
\newcommand{\ri}{\mathrm{i}}
\newcommand{\rd}{\mathrm{d}}
\newcommand{\rdim}{\mathsf{d}}
\newcommand{\rc}{\mathrm{c}}
\newcommand{\rb}{\mathrm{b}}
\newcommand{\ry}{\mathrm{y}}
\newcommand{\rt}{\mathrm{t}}
\newcommand{\rD}{\mathrm{D}}
\newcommand{\sIR}{\mu}
\newcommand{\uIR}{\eta}
\newcommand{\uv}{\kappa}
\newcommand{\oo}{{\mathtt{g}}}
\newcommand{\ooo}{\mathtt{f}}
\newcommand{\oooo}{\mathtt{h}}
\newcommand{\I}{\mathtt{I}}
\newcommand{\vI}{\mathtt{I}}
\newcommand{\vJ}{\mathtt{J}}
\newcommand{\vK}{\mathtt{K}}
\newcommand{\vL}{\mathtt{L}}
\newcommand{\vM}{\mathtt{M}}
\newcommand{\ro}{\mathtt{l}}
\newcommand{\rri}{i}
\newcommand{\ra}{a}
\newcommand{\rrm}{m}
\newcommand{\rs}{s}
\newcommand{\rr}{r}
\newcommand{\rn}{n}
\newcommand*\botimes{{\mathpalette\botimes@{1.5}}}
\newcommand*\botimes@[2]{\mathbin{\vcenter{\hbox{\scalebox{#2}{\hspace{0.0mm}$\m@th#1\otimes$\hspace{0.0mm}}}}}}
\newcommand*\Cdot{{\mathpalette\Cdot@{.6}}}
\newcommand*\Cdot@[2]{\mathbin{\vcenter{\hbox{\scalebox{#2}{\hspace{0.5mm}$\m@th#1\bullet$\hspace{0.5mm}}}}}}
\newtheorem{thm}{Theorem}
\newtheorem{lem}[thm]{Lemma}
\theoremstyle{remark}
\newtheorem{rem}[thm]{Remark}
\theoremstyle{remark}
\theoremstyle{definition}
\newtheorem{dfn}[thm]{Definition}
\numberwithin{equation}{section}
\numberwithin{thm}{section}
\numberwithin{example}{section}
\numberwithin{ass}{section}
\definecolor{darkblue}{rgb}{0.0,0.0,0.5}
\begin{document}
\title{Renormalization of singular elliptic stochastic PDEs using flow equation}
\author{Pawe{\l} Duch
\\
Faculty of Mathematics and Computer Science \\  
Adam Mickiewicz University in Pozna\'n\\
ul. Uniwersytetu Pozna\'nskiego 4, 61-614 Pozna\'n, Poland\\
pawel.duch@epfl.ch
}
\date{\today}

\maketitle

\begin{abstract}
We develop a solution theory for singular elliptic stochastic PDEs with fractional Laplacian, additive white noise and cubic non-linearity. The method covers the whole sub-critical regime. It is based on the Wilsonian renormalization group theory and the Polchinski flow equation.

\vspace{1mm}

\noindent {\it \small MSC classification: 60H17, 81T17 }
\end{abstract}

\tableofcontents

\section{Introduction}

A general technique that allows to renormalize and prove universality of parabolic singular SPDEs with fractional Laplacian, additive noise and polynomial non-linearity was developed in~\cite{duch}. The goal of this paper is to give a different application of this technique. We present a self-contained construction of solutions of the non-local singular elliptic SPDEs
\begin{equation}\label{eq:spde}
 \big(1+(-\Delta)^{\sigma/2}\big)\varPhi(x) = \xi(x)+\lambda\varPhi(x)^3 - \infty\varPhi(x),
 \quad 
 x\in\bR^\rdim,
\end{equation}
where $\rdim\in\{1,\ldots,6\}$, $\sigma\in(\rdim/3,\rdim/2]$, $(-\Delta)^{\sigma/2}$ is the fractional Laplacian, $\lambda\in\bR$ is sufficiently small and $\xi$ is the periodization of the white noise on $\bR^\rdim$ with period $2\pi$. Recall that the regularity of the noise $\xi$ is slightly worse than $-\rdim/2$ and the expected regularity of the solution is slightly worse than $\sigma-\rdim/2$. For $\sigma>\rdim/2$ the above equation is not singular and can be solved using classical PDE theory. For $\sigma\leq\rdim/2$ the solution is not a function but only a distribution. As a result, the cubic term is ill-defined and has to be renormalized by subtracting an appropriate mass counterterm (for $\rdim>6$ other counterterms are needed even if one takes into account all the symmetries of the equation). The renormalization problem is tractable only if the equation is sub-critical (super-renormalizable). This is the case if the expected regularity of the renormalized non-linearity is better than the regularity of $\xi$, i.e. $3(\sigma-\rdim/2)>-\rdim/2$. Let us remark that for $\rdim=5$ and $\sigma=2$ the above equation is sometimes called the elliptic quantization equation of the $\varPhi^4_3$ model (provided $\xi$ is replaced by the white noise on $\bR^\rdim$). 

Let $G\in L^1(\bR^\rdim)$ be the fundamental solution for the pseudo-differential operator $\fQ:=1+(-\Delta)^{\sigma/2}$ and let $G_\uv$ be the smooth approximation of $G$ with a spatial UV cutoff of order $[\uv]:=\uv^{1/\sigma}$ introduced in Def.~\ref{dfn:kernel_G}. We rewrite the above singular SPDE in the regularized mild form
\begin{equation}\label{eq:intro_mild}
 \varPhi = G_\uv \ast F_\uv[\varPhi],\qquad \uv\in(0,1/2].
\end{equation}
The functional $F_\uv[\varphi]$, called the force, is defined by
\begin{equation}\label{eq:force}
 F_\uv[\varphi](x):= \xi(x) 
 +\lambda\, \varphi^3(x)
 +\sum_{i=1}^{i_\sharp}\lambda^i\, c_{\uv}^{[i]} \varphi(x),
\end{equation}
where $i_\sharp=\floor{\sigma/(3\sigma-\rdim)}$ and the parameters $c_{\uv}^{[i]}\in\bR$ depending on the UV cutoff $\uv$ are called the counterterms. Let us state our main result.

\begin{thm}
There exists a choice of counterterms and a random variable $\lambda_0$ such that $\bE(\lambda_0^{-n})<\infty$ for every $n\in\bN_+$ and for every random variable \mbox{$\lambda\in[-\lambda_0,\lambda_0]$} and $\uv\in(0,1/2]$ Eq.~\eqref{eq:intro_mild} has a periodic solution \mbox{$\varPhi_\uv\in C^\infty(\bR^\rdim)$} and for all $\beta<\sigma-\rdim/2$ the limit $\lim_{\uv\searrow0}\varPhi_\uv$ exists almost surely in the Besov space $\sC^\beta(\bR^\rdim)$. 
\end{thm}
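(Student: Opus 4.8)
\emph{Proof plan.} The strategy is to adapt the flow-equation scheme of \cite{duch} from the parabolic to the present elliptic setting: the heat kernel is replaced by the Green function $G$ of $\fQ=1+(-\Delta)^{\sigma/2}$ and the parabolic scaling by the isotropic scaling of exponent $\sigma$, while the sub-criticality bound $3(\sigma-\rdim/2)>-\rdim/2$ (equivalently $3\sigma>\rdim$, which holds since $\sigma>\rdim/3$) ensures that only finitely many counterterms, all of mass type, are needed. First I would fix a multiscale decomposition $G_\uv=\int_\uv^{\infty}\dot G_\mu\,\rd\mu$ with $\dot G_\mu:=\partial_\mu G_\mu$ smooth and localized at spatial scale $\mu^{1/\sigma}$ (a consequence of Def.~\ref{dfn:kernel_G}) and introduce the \emph{effective force} $\fF_\mu[\varphi]$, $\mu\ge\uv$ --- a functional of $\varphi$, expanded in powers of $\varphi$ with random, space-dependent coefficient kernels --- which encodes the effect of the kernel at scales up to $\mu^{1/\sigma}$ and reduces to the bare force~\eqref{eq:force} at $\mu=\uv$. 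It solves a Polchinski-type flow equation of the schematic form
\begin{equation}
\partial_\mu\fF_\mu[\varphi]\;=\;\fB\big(\fF_\mu,\fF_\mu\big)[\varphi]\;+\;\partial_\mu\Big(\sum_{i=1}^{i_\sharp}\lambda^i\,c^{[i]}_\mu\,\varphi\Big),
\end{equation}
where $\fB$ is a fixed bilinear operation built from one convolution with $\dot G_\mu$ and one functional $\varphi$-derivative.

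The renormalization is the second step. Without counterterms the flow produces coefficients of $\fF_\mu$ that diverge as $\uv\searrow0$, but only the coefficients multiplying $\varphi$ do so; fixing the values of these finitely many coefficients at a fixed reference scale to $\uv$-independent constants determines the $c^{[i]}_\uv$ and renders the whole flow finite. The analytic core is then a uniform-in-$\uv$ bound on all coefficients of the renormalized $\fF_\mu$ in suitable weighted norms, obtained by induction on the order in $\lambda$ and on a scaling index: each application of $\fB$ improves the scaling by $\sigma$, so sub-criticality forces all but finitely many coefficients to be given by absolutely convergent $\mu$-integrals, the finitely many exceptions being precisely those fixed by the renormalization conditions; rerunning the estimates for the difference of two cutoffs yields convergence of $\fF_\mu$ as $\uv\searrow0$. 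I expect \textbf{this to be the main obstacle}: one must show that the bilinear term $\fB$ does not spawn an unbounded family of divergent coefficients and that the divergent ones are exactly those absorbable into the mass counterterms $c^{[i]}_\uv\,\varphi$; this power-counting argument is where sub-criticality is used essentially.

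The third step supplies the probabilistic estimates. Every coefficient of $\fF_\mu$, together with a finite family of explicit stochastic convolutions built from $G_\uv$ and the Wick powers of the regularized noise, lies in a fixed inhomogeneous Wiener chaos; Gaussian hypercontractivity then reduces all $L^p(\Omega)$-bounds, $p<\infty$, to second-moment estimates, which are exactly the deterministic kernel bounds of the previous step. Hence this finite collection of stochastic objects (the ``enhanced noise'') converges in every $L^p(\Omega)$ as $\uv\searrow0$, and a Kolmogorov-type continuity argument promotes this to almost sure convergence in the relevant Besov spaces; in particular the noise-driven linear component converges in $\sC^\beta(\bR^\rdim)$ for every $\beta<\sigma-\rdim/2$.

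Finally I would solve the mild equation. For fixed $\uv>0$, existence of a periodic solution $\varPhi_\uv\in C^\infty(\bR^\rdim)$ follows from a standard contraction-mapping argument for $\varphi\mapsto G_\uv\ast F_\uv[\varphi]$ on a ball of smooth periodic functions, $G_\uv$ being smooth and $F_\uv$ polynomial. To obtain uniformity in $\uv$ and pass to the limit, I would instead recast~\eqref{eq:intro_mild} as a fixed-point equation for $\varPhi_\uv$ minus its explicit stochastic part, with non-linearity governed by the renormalized effective force; this map is a contraction on a ball of fixed radius in $\sC^\beta(\bR^\rdim)$ whenever $|\lambda|\le c\,(1+N_\uv)^{-q}$, where $N_\uv$ is the norm of the enhanced noise and $c,q>0$ are deterministic. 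Setting $\lambda_0:=c\,\big(1+\sup_{\uv\in(0,1/2]}N_\uv\big)^{-q}$ --- almost surely positive with $\bE(\lambda_0^{-n})<\infty$ for every $n\in\bN_+$ by the third step --- gives, for every random $\lambda\in[-\lambda_0,\lambda_0]$ and every $\uv\in(0,1/2]$, a solution $\varPhi_\uv$; the convergence of the enhanced noise and the continuous dependence of the fixed point on the data then produce the almost sure limit $\lim_{\uv\searrow0}\varPhi_\uv$ in $\sC^\beta(\bR^\rdim)$ for every $\beta<\sigma-\rdim/2$.
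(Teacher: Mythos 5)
Your plan sits inside the same framework as the paper — a Polchinski flow equation for the effective force, power counting and sub-criticality to isolate finitely many mass-type relevant coefficients, renormalization by fixing their values at a reference scale, probabilistic estimates for a finite enhanced noise, a Kolmogorov-type continuity argument, and a small-$\lambda$ restriction expressed through the noise norm. Two of your implementation choices nevertheless diverge substantively. First, for the probabilistic step you propose to place the stochastic objects in a fixed Wiener chaos and invoke Gaussian hypercontractivity to upgrade second-moment bounds; the paper instead derives a closed flow equation for the joint cumulants $E^\vI_{\uv,\sIR}$ of the effective-force coefficients (Lemmas~\ref{lem:flow_E_general} and~\ref{lem:flow_E_form_bound}) and proves bounds for cumulants of every order by one induction (Theorem~\ref{thm:cumulants}); since the first cumulant is the expectation, the counterterm conditions are propagated inside the same induction, which is what makes the scheme self-contained. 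Your hypercontractivity route would work, but it forces you to handle the choice of counterterms separately from the $L^2$ estimates. Second, you propose a Da Prato--Debussche-style fixed-point for the remainder; the paper never sets up a fixed point at the level of the equation. It defines $\varPhi_\uv:=G_\uv\ast F_{\uv,1}[0]$ as a convergent power series in $\lambda$ once Theorem~\ref{thm:deterministic_convergence} gives bounds for the coefficients, and then verifies the identity $F_{\uv,1}[0]=F_{\uv,\sIR}[G_{\uv,\sIR}\ast F_{\uv,1}[0]]$ by a linear-ODE argument (Lemma~\ref{lem:solution}), so the effective-force construction already produces the solution without a separate contraction step. Finally, you do not address the slow decay of $G$ at infinity, which the paper flags as the reason regularity structures do not apply and which requires a dedicated long-range estimate (Theorem~\ref{thm:deterministic_long_range}) to push the bounds from $\sIR\in(0,1/2]$ to $\sIR\in(0,1]$; your norms and estimates would need an analogous long-range step, and this is a gap worth closing explicitly before the small-$\lambda$ and convergence arguments can be carried out on the full parameter range.
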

\begin{proof}
We first establish bounds for cumulants of the enhanced noise introduced in Def.~\ref{dfn:modified_coefficients}. The bounds are stated in Theorem~\ref{thm:cumulants} and hold true for an appropriate choice of the counterterms. Using these bounds and a Kolmogorov type argument we deduce bounds for the enhanced noise stated in Theorem~\ref{thm:probabilistic_bounds}. This together with the deterministic result of Theorem~\ref{thm:solution} implies the statement.
\end{proof}

\begin{rem}
There exists a general technique developed in~\cite{hairer2014structures,chandra2016bphz,bruned2019algebraic,bruned2021renormalising} based on the theory of regularity structures that allows to systematically renormalize virtually all sub-critical singular SPDEs with local differential operators. However, due to the slow decay of the kernel $G$ at infinity, this technique does not cover Eq.~\eqref{eq:intro_mild}.
\end{rem}

\begin{rem}
Our technique requires a small parameter. In the case of the elliptic equations we assume that the prefactor of the non-linear term is sufficiently small. Alternatively, one could introduce a prefactor in front of the noise term and assume that it is sufficiently small. In the case of parabolic equations the technique allows~\cite{duch} to construct a solution in a sufficiently small time interval without any assumption about the strength of the non-linearity.
\end{rem}

\begin{rem}
We study Eq.~\eqref{eq:intro_mild} with a regularized Green function $G_\uv$ and the periodization of the white noise $\xi$. Our technique is also applicable~\cite{duch} to the equation with the standard Green function $G$ and a regularized noise $\xi_\uv$.
\end{rem}

\section{Effective force}\label{sec:intro_flow}

The basic object of the flow equation approach is the effective force functional
\begin{equation}
 \sS(\bM)\ni\varphi\mapsto F_{\uv,\sIR}[\varphi]\in \sS'(\bM)
\end{equation}
depending on the UV cutoff $\uv\in(0,1/2]$ and the flow parameter $\sIR\in[0,1]$, where $\sS(\bM)$ and $\sS'(\bM)$ denote the space of Schwartz functions and distributions on $\bM=\bR^\rdim$, respectively. Note that $\kappa$ is assumed to be strictly positive.

\begin{dfn}\label{dfn:kernel_G}
Fix $\chi\in C^\infty(\bR)$ such that $\chi(r)=0$ for $|r|\leq1$ and $\chi(r)=1$ for $|r|>2$ and let $\chi_\sIR(r) := \chi(r(1-\sIR)/\sIR)$ for $\sIR\in(0,1]$. Let $G\in L^1(\bM)$ be the fundamental solution for the pseudo-differential operator $\fQ:=1+(-\Delta)^{\sigma/2}$. For $\uv\in(0,1/2]$ and $\sIR\in(0,1]$ the smooth kernels $G_\uv,G_{\uv,\sIR}\in L^1(\bM)$ are defined by
\begin{equation}
 G_{\uv}(x) := 
 \chi_\uv(|x|^\sigma)\,
 G(x),
 \qquad G_{\uv,\sIR}(x) := 
 \chi_\uv(|x|^\sigma)\,
 \chi_\sIR(|x|^\sigma)\,
 G(x).
\end{equation}
\end{dfn}

\begin{rem}
Since $\chi_\uv(r)\chi_\sIR(r)=\chi_\uv(r)$ for $\sIR\leq\uv/2$ it holds that $G_{\uv,\sIR}=G_\uv$ for $\sIR\leq\uv/2$ and $G_{\uv,\sIR}=G_\sIR$ for $\sIR\geq 2\uv$. Moreover, $\lim_{\uv\searrow0}G_\uv=G$ in $L^1(\bM)$ and $G_{\uv,1}=0$ for all $\uv\in(0,1/2]$. Note that $\uv=1/2$ corresponds to UV cutoff at spatial scale $1$. For arbitrary fixed UV cutoff $\uv\in(0,1/2]$ the family $G_{\uv,\sIR}$, $\sIR\in[0,1]$, interpolates between $G_\uv$ and $0$. Because of slow decay of $G$ for $\sigma\notin2\bN_+$ the range $\sIR\in[1/2,1]$ will require some special treatment.
\end{rem}

By definition, the effective force satisfies the flow equation
\begin{equation}\label{eq:intro_flow_eq}
 \langle\partial_\sIR F_{\uv,\sIR}[\varphi],\psi\rangle
 =
 -\langle \rD F_{\uv,\sIR}[\varphi,\partial_\sIR G_{\uv,\sIR}\ast F_{\uv,\sIR}[\varphi]],\psi\rangle
\end{equation}
with the boundary condition $F_{\uv,0}[\varphi]=F_\uv[\varphi]$, where $\varphi,\psi\in \sS(\bM)$ and the force $F_\uv[\varphi]$ is defined by Eq.~\eqref{eq:force}. The pairing between a distribution $V$ and a test function $\psi$ is denoted by $\langle V,\psi\rangle$ and $\langle\rD V[\varphi,\zeta],\psi\rangle$ is the derivative of the functional $\sS(\bM)\ni\varphi\mapsto \langle V[\varphi],\psi\rangle\in\bR$ in the direction $\zeta\in \sS(\bM)$. In contrast to the force $F_\uv[\varphi]$ the effective force $F_{\uv,\sIR}[\varphi]$ is generically a non-local functional.

We claim that $\varPhi_\uv := G_\uv\ast F_{\uv,1}[0]$ is a solution of Eq.~\eqref{eq:intro_mild}. The above statement is a consequence of the equalities $G_{\uv,\sIR}=G_\uv$ and $F_{\uv,\sIR}[\varphi]=F_\uv[\varphi]$ that hold for all $\sIR\in[0,\uv/2]$ and the identity
\begin{equation}\label{eq:intro_stationary_relation}
 F_{\uv,1}[0] = F_{\uv,\sIR}[G_{\uv,\sIR}\ast F_{\uv,1}[0]]
\end{equation}
that holds for all $\sIR\in[0,1]$. In order to prove the last identity we use the flow equation to show that the difference between the LHS and RHS of Eq.~\eqref{eq:intro_stationary_relation}, denoted by $g_{\uv,\sIR}$, satisfies the linear ODE
\begin{equation}
 \partial_\sIR g_{\uv,\sIR}
 =
 -\rD F_{\uv,\sIR}[G_{\uv,\sIR}\ast F_{\uv,1}[0],\partial_\sIR G_{\uv,\sIR}\ast g_{\uv,\sIR}]
\end{equation}
with the boundary condition $g_{\uv,1}=0$. This implies that $g_{\uv,\sIR}=0$ for all $\sIR\in[0,1]$.

\begin{rem}
The effective force plays also a central role in the approach to singular SPDEs proposed earlier by Kupiainen~\cite{kupiainen2016rg,kupiainen2017kpz} and applied to the dynamical $\varPhi^4_3$ model and the KPZ equation. The method developed by Kupiainen is based on the Wilsonian discrete renormalization group theory~\cite{wilson1971}. In this approach one uses the fact that for $\sIR\geq\uIR$ the effective force satisfies the equation
\begin{equation}
 F_{\uv,\sIR}[\varphi] = F_{\uv,\uIR}[(G_{\uv,\uIR}-G_{\uv,\sIR})\ast F_{\uv,\sIR}[\varphi]+\varphi].
\end{equation}
Given $F_{\uv,\uIR}[\varphi]$ the above equation is viewed as an equation for $F_{\uv,\sIR}[\varphi]$ that can be solved using the Banach fixed point theorem. One defines recursively the effective force $F_{\uv,\sIR}[\varphi]$ for $\uv=2L^{-N}$ and $\sIR\in\{L^{-N},\ldots,1\}$, where $L>1$, starting with $F_{2L^{-N}}[\varphi]=F_{2L^{-N},L^{-N}}[\varphi]$ and finishing with $F_{2L^{-N},1}[\varphi]$. In order to prove uniform bounds for $F_{2L^{-N},1}[\varphi]$ one has to appropriately adjust the counterterms which are the coefficients of the force $F_{2L^{-N}}[\varphi]$. The fine-tuning problem becomes exceedingly difficult for equations close to criticality. The flow equation provides a different, more efficient and simpler method of constructing the effective force that allows to treat equations arbitrarily close to criticality. 
\end{rem}

\section{Construction of the effective force coefficients}

The starting point of the construction of the effective force is the formal ansatz
\begin{equation}\label{eq:intro_ansatz}
 \langle F_{\uv,\sIR}[\varphi],\psi\rangle
 :=\sum_{i=0}^\infty \sum_{m=0}^\infty \lambda^i\,\langle F^{i,m}_{\uv,\sIR},\psi\otimes\varphi^{\otimes m}\rangle,
\end{equation}
where $\lambda$ is the prefactor of the cubic non-linearity in the original equation. The distributions $F^{i,m}_{\uv,\sIR}\in\sS'(\bM^{1+m})$, $i,m\in\bN_0$, are called the effective force coefficients. By definition the expression $\langle F^{i,m}_{\uv,\sIR},\psi\otimes\varphi_1\otimes\ldots\otimes\varphi_m\rangle$ is invariant under permutations of the test functions $\varphi_1,\ldots,\varphi_m\in\ \sS(\bM)$. The coefficients $F^{i,m}_\uv$ of the force $F_\uv[\varphi]$ are defined by an equality analogous to Eq.~\eqref{eq:intro_ansatz}.

\begin{rem}
Let us list the non-vanishing force coefficients $F^{i,m}_{\uv}$:
\begin{gather}
 F^{0,0}_\uv(x)=\xi(x),\qquad 
 F^{1,3}_\uv(x;x_1,x_2,x_3)=\delta_\bM(x-x_1)\delta_\bM(x-x_2)\delta_\bM(x-x_3),
 \\
 F^{i,1}_\uv(x;x_1)=c_{\uv}^{[i]} \delta_\bM(x-x_1),
 \quad i\in\{0,\ldots,i_\sharp\},
\end{gather}
where $\delta_\bM\in\sS'(\bM)$ is the Dirac delta at $0\in\bM$.
\end{rem}

The flow equation~\eqref{eq:intro_flow_eq} for the effective force $F_{\uv,\sIR}[\varphi]$ formally implies that the effective force coefficients $F^{i,m}_{\uv,\sIR}$ satisfy the flow equation
\begin{multline}\label{eq:intro_flow_eq_i_m}
 \langle \partial_\sIR^{\phantom{i}} F^{i,m}_{\uv,\sIR}\,,
 \,\psi\otimes\varphi^{\otimes m}\rangle
 \\
 =
 -\sum_{j=0}^i\sum_{k=0}^m
 \,(1+k)
 \,\big\langle F^{j,1+k}_{\uv,\sIR}\otimes F^{i-j,m-k}_{\uv,\sIR},
 \psi\otimes \varphi^{\otimes k}
 \otimes \fV\partial_\sIR G_{\uv,\sIR}
 \otimes \varphi^{\otimes(m-k)}
 \big\rangle
\end{multline}
with the boundary condition $F^{i,m}_{\uv,0}=F^{i,m}_{\uv}$, where $\fV\partial_\sIR G_{\uv,\sIR}\in C^\infty(\bM\times\bM)$ is defined by $\fV\partial_\sIR G_{\uv,\sIR}(x,y):=\partial_\sIR G_{\uv,\sIR}(x-y)$.

The basic idea behind the flow equation approach is a recursive construction of the effective force coefficients $F^{i,m}_{\uv,\sIR}$:
\begin{enumerate}
 \item[(0)] We set $F^{0,0}_{\uv,\sIR}=\xi$ and $F^{i,m}_{\uv,\sIR}= 0$ if $m>3i$.
 \item[(I)] Assuming that all $F^{i,m}_{\uv,\sIR}$ with $i<i_\circ$, or $i=i_\circ$ and $m>m_\circ$ were constructed we define $\partial_\sIR F^{i,m}_{\uv,\sIR}$ with $i=i_\circ$ and $m=m_\circ$ with the use of Eq.~\eqref{eq:intro_flow_eq_i_m}.
 \item[(II)] Subsequently, $F^{i,m}_{\uv,\sIR}$ is defined by $F^{i,m}_{\uv,\sIR} = F^{i,m}_{\uv} + \int_0^\sIR \partial_\uIR F^{i,m}_{\uv,\uIR}\,\rd\uIR$.
\end{enumerate}
Using this procedure we construct all the effective force coefficients $F^{i,m}_{\uv,\sIR}$ for arbitrary $\uv\in(0,1/2]$, $\sIR\in[0,1]$. 

\begin{rem}\label{rem:explicit_coefficients}
One easily shows that the effective force coefficients $F^{i,m}_{\uv,\sIR}$ actually vanish if $i=0$ and $m>0$, or $i>0$ and $m>2(i-1)+3$. The only non-zero coefficients $F^{i,m}_{\uv,\sIR}$ which are independent of the value of the flow parameter $\sIR$ are $F^{0,0}_{\uv,\sIR}(x)=\xi(x)$ and $F^{1,3}_{\uv,\sIR}(x;x_1,x_2,x_3)=F^{1,3}_{\uv}(x;x_1,x_2,x_3)$. These coefficients happen to be independent of the UV cutoff $\uv$. Let us give some further examples:
\begin{equation}
\begin{gathered}
 F^{1,2}_{\uv,\sIR}(x;x_1,x_2)=3\varPsi_{\uv,\sIR}(x)\, \delta_\bM(x-x_1)\delta_\bM(x-x_2),
 \\
 F^{1,1}_{\uv,\sIR}(x;x_1)=(3\varPsi_{\uv,\sIR}^2(x)+c_{\uv}^{[1]})\, \delta_\bM(x-x_1),
 \qquad
 F^{1,0}_{\uv,\sIR}(x)=\varPsi_{\uv,\sIR}^3(x)+c_{\uv}^{[1]}\,\varPsi_{\uv,\sIR}(x),
\end{gathered} 
\end{equation}
where $\varPsi_{\uv,\sIR}:=G_{\uv\shortparallel\sIR}\ast \xi$ and $G_{\uv\shortparallel\sIR}:=G_\uv-G_{\uv,\sIR}$  is the so-called fluctuation propagator. The coefficient $F^{2,5}_{\uv,\sIR}(x;x_1,\ldots,x_5)$ is obtained from the distribution
$3\,\delta_\bM(x-x_1)\delta_\bM(x-x_2)\,G_{\uv\shortparallel\sIR}(x-x_3)\, \delta_\bM(x_3-x_4)\delta_\bM(x_3-x_5)$ by symmetrization in variables $x_1,\ldots,x_5$. We also have
\begin{multline}
 F^{2,1}_{\uv,\sIR}(x;x_1) = 
 (3\varPsi_{\uv,\sIR}^2(x)+c_{\uv}^{[1]})\,
 G_{\uv\shortparallel\sIR}(x-x_1)\,
 (3\varPsi_{\uv,\sIR}^2(x_1)+c_{\uv}^{[1]})
 \\
 +\left(\int_{\bM}6\varPsi_{\uv,\sIR}(x)\,G_{\uv\shortparallel\sIR}(x-x_2)\,
 (\varPsi_{\uv,\sIR}^3(x_2)+c_{\uv}^{[1]}\varPsi_{\uv,\sIR}(x_2))\,\rd x_2
 + c_{\uv}^{[2]}\right)\delta_\bM(x-x_1),
\end{multline}
\begin{equation}
 F^{2,0}_{\uv,\sIR}(x) = 
 (3\varPsi_{\uv,\sIR}^2(x)+c_{\uv}^{[1]})
 \!\int\! G_{\uv\shortparallel\sIR}(x-x_1)
 (\varPsi_{\uv,\sIR}^3(x_1)+c_{\uv}^{[1]}\varPsi_{\uv,\sIR}(x_1))\,\rd x_1
 + c_{\uv}^{[2]}\, \varPsi_{\uv,\sIR}(x).
\end{equation}
For the sake of brevity, we did not give expressions for the coefficients $F^{2,4}_{\uv,\sIR}$, $F^{2,3}_{\uv,\sIR}$, $F^{2,2}_{\uv,\sIR}$, which should be constructed after $F^{2,5}_{\uv,\sIR}$ and before $F^{2,1}_{\uv,\sIR}$.
\end{rem}

\begin{rem}
The effective force is an analog of the effective potential in QFT. A recursive construction of the effective potential coefficients based on the flow equation is the backbone of a very simple proof of perturbative renormalizability of QFT models proposed by Polchinski~\cite{polchinski1984} (see~\cite{muller2003} for a review).
\end{rem}

The solution of Eq.~\eqref{eq:intro_mild} is formally given by the sum
\begin{equation}\label{eq:solution_series}
 \varPhi_\uv = G_\uv\ast F_{\uv,1}[0] := G_\uv\ast \sum_{i=0}^\infty \lambda^i F^{i,0}_{\uv,1}.
\end{equation}
Assuming that $\lambda$ is sufficiently small in Sec.~\ref{sec:solution} we prove that the above series converges absolutely and $\varPhi_\uv$, as defined above, solves Eq.~\eqref{eq:intro_mild} and converges almost surely as $\uv\searrow0$ in the Besov space $\mathscr{C}^\beta(\bM)$ for every $\beta<\sigma-\rdim/2$. To this end, we will establish certain bounds for $\partial_\uv^r\partial_\sIR^s F_{\uv,\sIR}^{i,m}$ which are stated in Sec.~\ref{sec:deterministic}. The bounds involve a regularizing kernel $K_\sIR$, which is introduced in Sec.~\ref{sec:kernels}, and a norm $\|\Cdot\|_{\cV^m}$, which is introduced in Sec.~\ref{sec:topology}.

\section{Regularizing kernels}\label{sec:kernels}

\begin{dfn}
For $n\in\bN_+$ let $\cK^n\subset\sS'(\bM^n)$ be the space of signed measures $K$ on $\bM^n$ with finite total variation $|K|$. We set $\|K\|_{\cK^n} = \int_{\bM^n} |K(\rd x_1\ldots\rd x_n)|$. If $n=1$, then we write \mbox{$\cK^1=\cK\subset\sS'(\bM)$}. We denote by $\delta_\bM\in\cK$ the Dirac delta at $0\in\bM$. Given $K\in\cK$ and $n\in\bN_+$ we set $K^{\otimes n}:=K\otimes\ldots\otimes K\in\cK^n$.
\end{dfn}

\begin{rem} It holds that $\|K\|_{\cK^n}=\|K\|_{L^1(\bM^n)}$ for all $K\in L^1(\bM^n)\subset\cK^n$.
\end{rem}

\begin{dfn}
For $\sIR\geq 0$ the kernel $K_\sIR\in\cK$ is the unique solution of the equation $\fP_\sIR K_\sIR=\delta_\bM$, where $\fP_\sIR:=1-[\sIR]^2\Delta$ and $[\sIR]:=\sIR^{1/\sigma}$. We set $K_\sIR^{\ast0}:=\delta_\bM$, $\fP^0_\sIR:=1$ and $K^{\ast(\oo+1)}_\sIR:=K_\sIR\ast K^{\ast\oo}_\sIR$, $\fP^{\oo+1}_\sIR:=\fP_\sIR\, \fP^{\oo}_\sIR$ for $\oo\in\bN_0$. We omit the index $\sIR$ if $\sIR=1$. 
\end{dfn}

\begin{rem}
It holds that $K_0=\delta_{\bM}$ and $K_\sIR\in L^1(\bM)$ for $\sIR>0$. For $\oo\in\bN_0$ and $\sIR\geq 0$ the kernel $K_\sIR^{\ast\oo}$ is a positive measure with total mass $\|K_\sIR^{\ast\oo}\|_\cK=1$. We have $\fP^\oo_\sIR K^{\ast\oo}_\sIR=\delta_\bM$. The fact that the regularizing kernel $K_\sIR^{\ast\oo}$ is an inverse of a differential operator simplifies the analysis in Sec.~\ref{sec:taylor}.
\end{rem}

\begin{lem}\label{lem:kernel_u_v}
For any $\sIR,\uIR>0$ it holds that
\begin{equation}\label{eq:kernel_u_v}
\begin{gathered}
 K_\sIR =  \fP_\uIR K_\sIR \ast K_\uIR,
 \qquad
 \|\fP_\uIR K_\sIR\|_\cK= 1\vee(2[\uIR/\sIR]^2-1).
\end{gathered} 
\end{equation}
In particular, if $\sIR\geq\uIR$, then $\|\fP_\uIR K_\sIR\|_\cK=1$.
\end{lem}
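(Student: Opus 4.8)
The plan is to produce an explicit formula for $K_{\sIR,\uIR}$ and then check the two assertions directly. Since $\fP_\sIR=1-[\sIR]^2\Delta$ has Fourier symbol $1+[\sIR]^2|p|^2$, the kernel $K_\sIR$ is the positive integrable function with $\widehat{K_\sIR}(p)=(1+[\sIR]^2|p|^2)^{-1}$, so the relation $K_\sIR=K_{\sIR,\uIR}\ast K_\uIR$ forces $\widehat{K_{\sIR,\uIR}}(p)=(1+[\uIR]^2|p|^2)(1+[\sIR]^2|p|^2)^{-1}$. Using $[\uIR]^2/[\sIR]^2=[\uIR/\sIR]^2$ one has
\[
 \frac{1+[\uIR]^2|p|^2}{1+[\sIR]^2|p|^2}=[\uIR/\sIR]^2+\bigl(1-[\uIR/\sIR]^2\bigr)\frac{1}{1+[\sIR]^2|p|^2},
\]
which suggests defining $K_{\sIR,\uIR}:=[\uIR/\sIR]^2\,\delta_\bM+\bigl(1-[\uIR/\sIR]^2\bigr)K_\sIR$. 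As a linear combination of $\delta_\bM$ and an $L^1$ function this is an element of $\cK$.

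The first step would then be to verify $K_\sIR=K_{\sIR,\uIR}\ast K_\uIR$. The quickest route is via the Fourier transform, which is injective on $\sS'(\bM)\supset\cK$ and maps convolution of finite measures to pointwise products, so the identity reduces to the displayed algebraic one multiplied by $\widehat{K_\uIR}$. A route that stays within the elementary framework of the paper is to apply $\fP_\sIR$ to $K_{\sIR,\uIR}\ast K_\uIR$: using $\fP_\uIR K_\uIR=\delta_\bM$ one rewrites $\fP_\sIR K_\uIR=[\sIR/\uIR]^2\,\delta_\bM+(1-[\sIR/\uIR]^2)K_\uIR$, and together with $\fP_\sIR K_\sIR=\delta_\bM$ this gives $\fP_\sIR(K_{\sIR,\uIR}\ast K_\uIR)=\delta_\bM$; since $K_{\sIR,\uIR}\ast K_\uIR\in\cK$, the uniqueness clause in the definition of $K_\sIR$ closes the argument. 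The case $\uIR=0$ is trivial, since then $K_{\sIR,0}=K_\sIR$ and $K_0=\delta_\bM$.

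For the norm I would use that, for $\sIR>0$, the kernel $K_\sIR$ is a positive measure, absolutely continuous with respect to Lebesgue measure (as $K_\sIR\in L^1(\bM)$) and of total mass $\widehat{K_\sIR}(0)=1$; in particular $\delta_\bM$ and $K_\sIR$ are mutually singular, so $\|c_1\delta_\bM+c_2K_\sIR\|_\cK=|c_1|+|c_2|$ for all real $c_1,c_2$. Taking $c_1=[\uIR/\sIR]^2\ge0$ and $c_2=1-[\uIR/\sIR]^2$ yields $\|K_{\sIR,\uIR}\|_\cK=[\uIR/\sIR]^2+\bigl|1-[\uIR/\sIR]^2\bigr|$, which equals $1$ when $[\uIR/\sIR]^2\le1$ (hence in particular when $\uIR\le\sIR$) and $2[\uIR/\sIR]^2-1$ otherwise; both cases are summarised by $1\vee(2[\uIR/\sIR]^2-1)$. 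There is no deep obstacle here; the one point that genuinely needs justification — because an exact value of the norm is claimed rather than just an upper bound, which would be immediate from the triangle inequality — is the positivity of $K_\sIR$ and the absence of an atom at the origin, and this follows from the remark preceding the lemma applied with a single convolution factor (together with $K_\sIR\in L^1(\bM)$).
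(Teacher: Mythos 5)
Your proposal is correct and takes the same route as the paper: the paper's entire proof is the one-liner ``Note that $K_{\sIR,\uIR} = [\uIR/\sIR]^2\delta_\bM + (1-[\uIR/\sIR]^2) K_\sIR$ and $\|\delta_\bM\|_\cK=\|K_\sIR\|_{\cK}=1$,'' which encodes exactly your decomposition. You are in fact more careful than the printed proof on the one point that deserves care: the triangle inequality alone gives only $\|K_{\sIR,\uIR}\|_\cK\le[\uIR/\sIR]^2+|1-[\uIR/\sIR]^2|$, and the claimed equality needs the observation (which you supply, and the paper leaves implicit) that $\delta_\bM$ and the positive $L^1$ function $K_\sIR$ are mutually singular, so that total-variation norms add.
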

\begin{proof}
We have $\fP_\uIR K_\sIR = [\uIR/\sIR]^2\delta_\bM + (1-[\uIR/\sIR]^2) K_\sIR$ and $\|\delta_\bM\|_\cK=\|K_\sIR\|_{\cK}=1$.
\end{proof}

\begin{dfn}\label{dfn:periodization}
Let $\bT=\bM/(2\pi\bZ)^\rdim$. For $V\in L^1(\bM)$ we define $\fT V\in L^1(\bT)$ by
\begin{equation}
 \fT V(x):= \sum_{y\in(2\pi\bZ)^\rdim} V(x+y).
\end{equation}
\end{dfn}
\begin{rem}\label{rem:periodic}
For $K\in L^1(\bM)$ and periodic $f\in C(\bT)$ it holds that $K\ast f = \fT K \star f$, where $\ast$ and $\star$ are the convolutions in $\bM$ and $\bT$, respectively.
\end{rem}

\begin{lem}\label{lem:kernel_simple_fact}
Let $\oo\in\bN_0$, $a\in\bN_0^{\rdim}$ and $n\in[1,\infty]$. The following is true:
\begin{enumerate}
\item[(A)]
If $|a|\leq\oo$, then $\|\partial^a K^{\ast\oo}_\sIR\|_\cK
 \lesssim [\sIR]^{-|a|}$ uniformly in $\sIR>0$. 
\item[(B)] 
$\|\fT K^{\ast\rdim}_\sIR\|_{L^n(\bT)}\lesssim [\sIR]^{-\rdim (n-1)/n}$ uniformly in $\sIR\in(0,1]$.
\item[(C)]
It holds that $\|\fP_\sIR\partial_\sIR K_\sIR\|_{\cK} \lesssim [\sIR]^{-\sigma}$ uniformly in $\sIR>0$.
\end{enumerate}
\end{lem}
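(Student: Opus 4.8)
The plan is to reduce everything to explicit properties of the elementary kernel $K_\sIR$ solving $\fP_\sIR K_\sIR = \delta_\bM$, i.e. $(1-[\sIR]^2\Delta)K_\sIR = \delta_\bM$. First I would record the scaling identity $K_\sIR(x) = [\sIR]^{-\rdim} K_1(x/[\sIR])$, which follows immediately from the fact that $\fP_\sIR$ is obtained from $\fP_1$ by the dilation $x\mapsto x/[\sIR]$; more generally $K_\sIR^{\ast\oo}(x) = [\sIR]^{-\rdim}K_1^{\ast\oo}(x/[\sIR])$. Since $K_1$ is the Bessel-type potential $(1-\Delta)^{-1}$ on $\bR^\rdim$, it is a positive, integrable, smooth-away-from-the-origin function decaying exponentially at infinity, and $K_1^{\ast\oo}$ inherits these properties with the singularity at the origin becoming milder as $\oo$ grows.

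For part (A): by the scaling identity, $\partial^a K_\sIR^{\ast\oo}(x) = [\sIR]^{-\rdim-|a|}(\partial^a K_1^{\ast\oo})(x/[\sIR])$, so $\|\partial^a K_\sIR^{\ast\oo}\|_\cK = [\sIR]^{-|a|}\,\|\partial^a K_1^{\ast\oo}\|_\cK$, and it remains to check that $\partial^a K_1^{\ast\oo}$ is an $L^1$ function (equivalently a finite signed measure) whenever $|a|\leq\oo$. This is where the regularity gained by convolution enters: $K_1\in W^{\sigma,1}$-type smoothing means each factor of $K_1$ in the $\oo$-fold convolution absorbs, roughly, two derivatives near the origin, and a clean way to see $L^1$-boundedness of up to $\oo$ derivatives is to write $\partial^a K_1^{\ast\oo}$ as a convolution of $\partial^{a_1}K_1,\dots,\partial^{a_\oo}K_1$ with $\sum a_j = a$ and $|a_j|\leq 1$, using $\|f_1\ast\cdots\ast f_\oo\|_{L^1}\leq\prod\|f_j\|_{L^1}$ together with the fact that $\partial^{a_j}K_1\in L^1(\bM)$ for $|a_j|\leq 1$ (first derivatives of the Bessel potential are still integrable in every dimension). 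For part (C): differentiating $\fP_\sIR K_\sIR=\delta_\bM$ in $\sIR$ gives $\fP_\sIR\partial_\sIR K_\sIR = -(\partial_\sIR[\sIR]^2)\Delta K_\sIR = -(\partial_\sIR[\sIR]^2)\,\frac{1}{[\sIR]^2}(K_\sIR-\delta_\bM)$, so $\partial_\sIR K_\sIR = -(\partial_\sIR[\sIR]^2)[\sIR]^{-2}\,K_\sIR\ast(K_\sIR-\delta_\bM)$; since $\partial_\sIR[\sIR]^2 = \tfrac{2}{\sigma}[\sIR]^{2-\sigma}$ and $\|K_\sIR\ast(K_\sIR-\delta_\bM)\|_\cK\leq 2$, this yields $\|\partial_\sIR K_\sIR\|_\cK\lesssim [\sIR]^{-\sigma}$ uniformly in $\sIR>0$.

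For part (B) the scaling gives $\fT K_\sIR^{\ast\rdim}(x) = \sum_{y\in(2\pi\bZ)^\rdim}[\sIR]^{-\rdim}K_1^{\ast\rdim}(x/[\sIR]+y/[\sIR])$. The two regimes $n=1$ and $n=\infty$ bracket the estimate. For $n=1$, $\|\fT K_\sIR^{\ast\rdim}\|_{L^1(\bT)} = \|K_\sIR^{\ast\rdim}\|_{L^1(\bM)} = 1$, matching $[\sIR]^{0}$. For $n=\infty$, I would use that $K_1^{\ast\rdim}$ is bounded (the $\rdim$-fold convolution of the Bessel potential is continuous and bounded on $\bR^\rdim$ — one factor suffices to kill the borderline logarithm in dimension $\rdim=2$ and the power singularities in higher dimensions), hence $\|K_1^{\ast\rdim}\|_{L^\infty}<\infty$; summing the periodization costs only a constant because $K_1^{\ast\rdim}$ decays exponentially and $[\sIR]\leq 1$, so $\|\fT K_\sIR^{\ast\rdim}\|_{L^\infty(\bT)}\lesssim [\sIR]^{-\rdim}$. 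The intermediate values of $n$ follow by interpolation (or directly by $\|\cdot\|_{L^n}\leq\|\cdot\|_{L^1}^{1/n}\|\cdot\|_{L^\infty}^{(n-1)/n}$), giving the exponent $-\rdim(n-1)/n$.

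The main obstacle is the $L^\infty$ (equivalently $L^1$-of-derivatives) bound at the borderline: one must confirm that exactly $\rdim$ convolution factors suffice to make $K_1^{\ast\rdim}$ bounded, and that exactly $\oo$ factors make $\partial^a K_1^{\ast\oo}$ integrable for $|a|\leq\oo$. Both reduce to the known local behaviour of the Bessel kernel $K_1(x)\sim |x|^{\sigma-\rdim}$ as $|x|\to 0$ (with a logarithm when $\sigma=\rdim$, which does not occur here since $\sigma\leq\rdim/2<\rdim$) and its exponential decay at infinity; convolving raises the local Hölder/Sobolev exponent additively, so after $\rdim$ (resp.\ enough) convolutions the singularity is absorbed. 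I would isolate this as a short preliminary computation on $K_1$ and then feed it into the scaling arguments above, which are otherwise routine.
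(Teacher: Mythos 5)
The paper states this lemma without proof, treating it as a collection of standard scaling and periodization estimates for the Bessel kernel $K_\sIR = (1-[\sIR]^2\Delta)^{-1}\delta_\bM$, so there is no paper's proof to compare against. Your argument is essentially correct and is the natural one: the scaling identity $K^{\ast\oo}_\sIR(x)=[\sIR]^{-\rdim}K_1^{\ast\oo}(x/[\sIR])$ reduces (A) and (B) to properties of $K_1^{\ast\oo}$, the decomposition $a=a_1+\cdots+a_\oo$ with $|a_j|\le1$ together with Young and $\nabla K_1\in L^1(\bM)$ gives (A), the $L^1$--$L^\infty$ interpolation gives (B), and direct differentiation of $\fP_\sIR K_\sIR=\delta_\bM$ gives (C).

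There are however a few slips worth fixing, none of which break the argument. First and most importantly, the local singularity of $K_1$ is $K_1(x)\sim|x|^{2-\rdim}$ near the origin ($\log$ when $\rdim=2$, bounded when $\rdim=1$), \emph{not} $|x|^{\sigma-\rdim}$: $\fP_\sIR=1-[\sIR]^2\Delta$ is a genuine second-order operator, and $\sigma$ enters only through the scale $[\sIR]=\sIR^{1/\sigma}$. Your concluding paragraph conflates $K_1$ with the Green's function $G$ of $1+(-\Delta)^{\sigma/2}$, which does behave like $|x|^{\sigma-\rdim}$; that is a different kernel. Related to this, the parenthetical in your proof of (B) that ``one factor suffices to kill... the power singularities in higher dimensions'' is false for $\rdim\ge5$ (e.g.\ $K_1^{\ast2}(x)\sim|x|^{4-\rdim}$ is still unbounded for $\rdim\ge5$), but the statement you actually use — that $K_1^{\ast\rdim}$ is bounded — holds because $2\rdim>\rdim$. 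Finally, a sign: differentiating $\fP_\sIR K_\sIR=\delta_\bM$ gives $\fP_\sIR\partial_\sIR K_\sIR=+(\partial_\sIR[\sIR]^2)\,\Delta K_\sIR$, not minus; this is harmless since only $\|\cdot\|_{\cK}$ enters, and your final estimate $\|\partial_\sIR K_\sIR\|_\cK\le \tfrac{4}{\sigma}[\sIR]^{-\sigma}$ is correct.
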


\begin{rem}
Let $f\in C(\bM)$, $\oo\in\bN_0$ and $a\in\bN_0^\rdim$ be such that $|a|\leq\oo$. Then $K^{\ast\oo}_{\sIR}\ast f\in C^\oo(\bM)$ and $\|\partial^a K^{\ast\oo}_{\sIR}\ast f\|_{L^\infty(\bM)}\lesssim [\sIR]^{-|a|}\|f\|_{L^\infty(\bM)}$ uniformly over $\sIR>0$ and $f\in C(\bM)$.
\end{rem}

\section{Function spaces for the coefficients}\label{sec:topology}

\begin{dfn}
For $\alpha<0$ and $\phi\in \sS'(\bM)$ we define
\begin{equation}
 \|\phi\|_{\sC^\alpha(\bM)}:= \sup_{\sIR\in(0,1]} [\sIR]^{-\alpha}\,
 \|K_\sIR^{\ast\oo}\ast \phi\|_{L^\infty(\bM)},
 \qquad
 \oo=\ceil{-\alpha}\in\bN_+.
\end{equation}
The space $\sC^\alpha(\bM)$ consists of $\phi\in\sS'(\bM)$ such that $\|\phi\|_{\sC^\alpha(\bM)}<\infty$. 
\end{dfn}

\begin{dfn}\label{dfn:sV}
Let $m\in\bN_0$. The vector space $\cV^m$ consists of $V\in C(\bM^{1+m})$ such that
\begin{equation}
 \|V\|_{\cV^m}
 :=
 \\
 \sup_{x\in\bM} \int_{\bM^m}
 |V(x;y_1,\ldots,y_m)|\,\rd y_1\ldots\rd y_m
\end{equation}
is finite and the function $x\mapsto V(x;y_1+x,\ldots,y_m+x)$ is $2\pi$ periodic for every $y_1,\ldots,y_m\in\bM$. For $\oo\in\bN_0$ the space $\cD^{m;\oo}$ consists of $V\in \sS'(\bM^{1+m})$ such that it holds $K^{\ast\oo,\otimes (1+m)}\ast V\in\cV^{m}$. The space $\cD^{m}$ is the union of the spaces $\cD^{m;\oo}$, $\oo\in\bN_0$. We also set $\cV^0=\cV$ and $\cD^0=\cD$.
\end{dfn}

\begin{rem}\label{rem:Vm_K}
For $V\in\cV^m$ and $K\in\cK^{1+m}$ it holds $\|K\ast V\|_{\cV^m}\leq \|K\|_{\cK^{1+m}} \|V\|_{\cV^m}$.
\end{rem}

\begin{rem}
It holds that $\cV=\cD^{0;0}=C(\bT)$. Moreover, we have $\|v\|_{\cV}=\|v\|_{L^\infty(\bM)}$ for all $v\in\cV$. Furthermore, $\cD=\sS'(\bT)$, since $K$ is the inverse of a differential operator.
\end{rem}

\begin{dfn}\label{dfn:map_Y}
The permutation group of the set $\{1,\ldots,n\}$ is denoted by $\cP_n$. For $m\in\bN_0$ and $V\in\cD^m$ and $\pi\in\cP_m$ we define $\fY_\pi V\in\cD^m$ by
\begin{equation}
 \big\langle \fY_\pi V,\psi \otimes
 \botimes_{q=1}^m \varphi_q\big\rangle
 :=
 \big\langle V,\psi\otimes
 \botimes_{q=1}^{m} \varphi_{\pi(q)}\rangle,
\end{equation}
where $\psi,\varphi_1,\ldots,\varphi_m\in\ \sS(\bM)$.
\end{dfn}
\begin{rem}\label{rem:fY_pi}
The map $\fY_\pi:\cV^{m}\to\cV^{m}$ is well defined and has norm one. 
\end{rem}

\begin{dfn}\label{dfn:map_B}
Let $m\in\bN_0$, $k\in\{0,\ldots,m\}$. We define the trilinear map $\fB\,:\,\sS(\bM)\times \cV^{1+k}\times\cV^{m-k}\to\cV^m$ by
\begin{multline}\label{eq:fB1_dfn}
 \fB(G,W,U)(x;y_1,\ldots,y_m)
 \\
 :=
 \int_{\bM^2} W(x;y_0,\ldots,y_k)
 \,G(y_0-z)\, 
 U(z;y_{1+k},\ldots,y_m)\,\rd y_0\rd z.
\end{multline}
\end{dfn}

\begin{lem}\label{lem:fB1_bound}
The map $\fB:\sS(\bM)\times \cV^{1+k}\times\cV^{m-k}\to\cV^m$ is well defined and
\begin{equation}\label{eq:fB1_bound}
 \|\fB(G,W,U)\|_{\cV^{m}}
 \leq
 \|G\|_\cK\,\|W\|_{\cV^{1+k}} \|U\|_{\cV^{m-k}}.
\end{equation}
\end{lem}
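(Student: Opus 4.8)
The estimate \eqref{eq:fB1_bound} is a direct consequence of the definition of the maps and the relevant norms, so the plan is simply to unwind the definitions and apply the triangle inequality together with Tonelli's theorem. First I would recall that, by Definition~\ref{dfn:map_B}, for fixed $x\in\bM$ and $y_1,\ldots,y_m\in\bM$ the value $\fB(G,W,U)(x;y_1,\ldots,y_m)$ is the double integral over $(y_0,z)\in\bM^2$ of the product $W(x;y_0,y_1,\ldots,y_k)\,G(y_0-z)\,U(z;y_{1+k},\ldots,y_m)$. Taking absolute values inside the integral and then integrating over $y_1,\ldots,y_m\in\bM$ (all integrals of nonnegative integrands, so Tonelli applies and the order is immaterial) gives
\begin{equation}
 \int_{\bM^m} |\fB(G,W,U)(x;y_1,\ldots,y_m)|\,\rd y_1\ldots\rd y_m
 \leq
 \int_{\bM^{2+m}} |W(x;y_0,\ldots,y_k)|\,|G(y_0-z)|\,|U(z;y_{1+k},\ldots,y_m)|\,\rd y_0\,\rd z\,\rd y_1\ldots\rd y_m.
\end{equation}

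Next I would perform the integrations in the right order. Integrating out $y_{1+k},\ldots,y_m$ first bounds the $U$-factor by $\sup_{z}\int_{\bM^{m-k}}|U(z;\cdot)| = \|U\|_{\sV^{m-k}}$ uniformly in $z$; what remains is $\|U\|_{\sV^{m-k}}\int_{\bM^{1+k}}|W(x;y_0,\ldots,y_k)|\big(\int_\bM |G(y_0-z)|\,\rd z\big)\rd y_0\ldots\rd y_k$. The inner $z$-integral equals $\|G\|_{L^1(\bM)}=\|G\|_\cK$ (here one uses translation invariance of Lebesgue measure, and the fact that for $G\in\sS(\bM)\subset L^1(\bM)$ the $\cK$-norm coincides with the $L^1$-norm). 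The remaining integral over $y_0,\ldots,y_k$ is bounded by $\|W\|_{\sV^{1+k}}$, again uniformly in $x$. Combining these three bounds and taking the supremum over $x\in\bM$ yields exactly \eqref{eq:fB1_bound}.

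The only point deserving a word of care is the periodicity/continuity claim implicit in the statement that $\fB(G,W,U)$ actually lies in $\sV^m$ (not merely that its $\sV^m$-seminorm is finite): one must check that $\fB(G,W,U)\in C(\bM^{1+m})$ and that $x\mapsto \fB(G,W,U)(x+z_1,\ldots)$ has the right $2\pi$-periodicity. Continuity follows from dominated convergence using the integrable majorant $|W(x;\cdot)|\,|G(\cdot)|\,\|U\|_{\sV^{m-k}}$, and the periodicity of $\fB(G,W,U)(x;y_1+x,\ldots,y_m+x)$ in $x$ follows by the change of variables $y_0\mapsto y_0+x$, $z\mapsto z+x$ from the corresponding periodicity of $W$ and $U$. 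I do not expect any genuine obstacle here; the whole lemma is a bookkeeping exercise whose sole purpose is to record the multiplicativity of the bound under the basic contraction operation $\fB$ that appears on the right-hand side of the flow equation~\eqref{eq:intro_flow_eq_i_m}.
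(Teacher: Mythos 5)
Your proof is correct and follows the same route as the paper's: write out the definition, take absolute values inside the integral, and recognize the product of norms. The paper's own proof stops after the absolute-value step with an ``easy to see'' remark; you simply fill in that step by integrating in the order $y_{1+k},\ldots,y_m$, then $z$, then $y_0,\ldots,y_k$, and you additionally verify the membership $\fB(G,W,U)\in\sV^m$ (continuity and periodicity), which is implicit but unremarked in the paper.
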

\begin{proof}
To prove well-definedness note that
\begin{multline}
 \fB(G,W,U)(x;y_1+x,\ldots,y_m+x)
 \\
 =
 \int_{\bM^2} W(x;y_0+x,\ldots,y_k+x)
 \,G(y_0-z)\, 
 U(z+x;y_{1+k}+x,\ldots,y_m+x)\,\rd y_0\rd z
\end{multline}
is $2\pi$ periodic. We have
\begin{multline}
 \|\fB(G,W,U)\|_{\cV^{m}}=
 \sup_{x\in\bM}  \int_{\bM^{m}} |\fB(G,W,U)(x;y_1,\ldots,y_m)|\,\rd y_1\ldots\rd y_m
 \\
 \leq
 \sup_{x\in\bM}\int_{\bM^{2+m}} |W(x;y_0,\ldots,y_k)|\,
  |G(y_0-z)|\, |U(z;y_{1+k},\ldots,y_{m})|\, \rd z\rd y_0\ldots\rd y_m.
\end{multline}
It is easy to see that the last line is bounded by the RHS of~\eqref{eq:fB1_bound}.
\end{proof}

\begin{rem}\label{rem:fB_Ks}
The fact that $\fP_\sIR^\oo K_\sIR^{\ast\oo}=\delta_\bM$ implies that for all $\sIR>0$ it holds that
\begin{equation}
\begin{gathered}
 K_\sIR^{\ast\oo,\otimes(1+m)}\ast\fB(G,W,U)=
 \fB\big(\fP^{2\oo}_\sIR G,
 K_\sIR^{\ast\oo,\otimes(2+k)}\ast W,
 K_\sIR^{\ast\oo,\otimes(1+m-k)}\ast U\big).
\end{gathered} 
\end{equation}
This allows to define $\fB(G,W,U)\in\cD^m$ for all $G\in\sS(\bM)$, $W\in\cD^{1+k}$ and $U\in\cD^{m-k}$.
\end{rem}

\begin{rem}
The RHS of Eq.~\eqref{eq:intro_flow_eq_i_m} can be written compactly using the map $\fB$.
\end{rem}

\section{Deterministic bounds for the irrelevant coefficients}\label{sec:deterministic}

\begin{dfn}
Recall that $\sigma\in(\rdim/3,\rdim/2]$, $[\sIR]=\sIR^{1/\sigma}$ and set
\begin{equation}
 \dim(\xi):=\rdim/2>0,\qquad 
 \dim(\varPhi):=\rdim/2-\sigma \geq 0,\qquad
 \dim(\lambda):=3\sigma-\rdim>0.
\end{equation} 
\end{dfn}

\begin{dfn}\label{dfn:relevant_irrelevant}
For $\varepsilon\geq0$ and $i,m\in\bN_0$ we define
\begin{equation}
 \varrho_\varepsilon(i,m) := 
 -\dim(\xi)-\varepsilon
 + m\, (\dim(\Phi)+3\varepsilon)
 + i\, (\dim(\lambda)-9\varepsilon)\in \bR.
\end{equation}
We omit $\varepsilon$ if $\varepsilon=0$. The effective force coefficients $F^{i,m}_{\uv,\sIR}$ such that $\varrho(i,m)\leq 0$ are called relevant. The remaining coefficients are called irrelevant.
\end{dfn}
\begin{rem}
The number of relevant coefficients $F^{i,m}_{\uv,\sIR}$ such that $m\leq3i$ is always finite (recall that $F^{i,m}_{\uv,\sIR}=0$ if $m>3i$). For example, in the case $\rdim=5$ and $\sigma=2$ the relevant coefficients are $F^{0,0}_{\uv,\sIR}$, $F^{1,0}_{\uv,\sIR}$, $F^{1,1}_{\uv,\sIR}$, $F^{1,2}_{\uv,\sIR}$, $F^{1,3}_{\uv,\sIR}$, $F^{2,0}_{\uv,\sIR}$, $F^{2,1}_{\uv,\sIR}$. For explicit expressions for these coefficients see Remark~\ref{rem:explicit_coefficients}.
\end{rem}

\begin{rem}\label{rem:rho}
For arbitrary $\varepsilon>0$ and $i,m\in\bN_0$ such that $m\leq 3i$, $\varrho_\varepsilon(i,m)<\varrho(i,m)$ holds. Let $i_\diamond$ be the smallest integer such that $\varrho(i_\diamond+1,0)>0$. Then $i_\diamond\in\bN_+$. Moreover, let $\varrho_\diamond>0$ be the minimum of $\varrho(i,m)+\ro$ for $i\in\{0,\ldots,i_\diamond\}$, $m\in\{0,\ldots,3i\}$, $\ro\in\bN_+$ such that $\varrho(i,m)+\ro>0$. Define $\varepsilon_\diamond:=\dim(\xi)/3\wedge\dim(\lambda)/11\wedge\varrho_\diamond/(10+9i_\diamond)\wedge\sigma$. We claim that for all $\varepsilon\in(0,\varepsilon_\diamond)$ and all \mbox{$i,m,\ro\in\bN_0$} it holds that \mbox{$\varrho_\varepsilon(i,m)+\ro>0$} if \mbox{$\varrho(i,m)+\ro>0$}. In what follows, we fix some $\varepsilon\in(0,\varepsilon_\diamond/3)$.
\end{rem}

\begin{lem}\label{lem:kernel_G}
For all $\oo\in\bN_0$, $r\in\bN_0$ it holds that $\partial_\uv^r \partial_\sIR G_{\uv,\sIR}\in C^\infty_\rc(\bM)$ and
\begin{equation}\label{eq:bound_G}
 \|\fP^{\oo}_\sIR \partial_\uv^r \partial_\sIR G_{\uv,\sIR}\|_\cK \lesssim [\uv]^{(\varepsilon-\sigma)r}\,[\sIR]^{-\varepsilon r}
\end{equation} 
uniformly in $\uv\in(0,1/2]$, $\sIR\in(0,1]$, where $G_{\uv,\sIR}$ was introduced in Def.~\ref{dfn:kernel_G}.
\end{lem}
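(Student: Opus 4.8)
The plan is to differentiate the explicit formula of Definition~\ref{dfn:kernel_G}, reduce the claim to $L^1(\bM)$-bounds on finitely many space derivatives of $\partial_\uv^r\partial_\sIR G_{\uv,\sIR}$, and estimate these by inserting pointwise bounds on $G$ and on the cutoffs, integrated over the shells on which the cutoff derivatives are supported. Since $G$ does not depend on $\uv$ or $\sIR$, for $r\ge1$ we have
\[
 \partial_\uv^r\partial_\sIR G_{\uv,\sIR}(x)
 =\big(\partial_\uv^r\chi_\uv(|x|^\sigma)\big)\,\big(\partial_\sIR\chi_\sIR(|x|^\sigma)\big)\,G(x),
\]
and for $r=0$ the same identity with $\partial_\uv^r\chi_\uv$ replaced by $\chi_\uv$. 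Because $\chi$ is locally constant near $0$ and near $+\infty$, the factor $\partial_\uv^r\chi_\uv(|x|^\sigma)$ (for $r\ge1$) is supported in the bounded shell $\{\,|x|^\sigma\in[\uv/(1-\uv),2\uv/(1-\uv)]\,\}$ and $\partial_\sIR\chi_\sIR(|x|^\sigma)$ in $\{\,|x|^\sigma\in[\sIR/(1-\sIR),2\sIR/(1-\sIR)]\,\}$ for $\sIR<1$, whereas for $\sIR=1$ a direct computation gives $\partial_\sIR G_{\uv,\sIR}=0$ since $\chi'$ vanishes near $0$; both shells avoid the origin, where the only singularity of $G$ lies, so $\partial_\uv^r\partial_\sIR G_{\uv,\sIR}\in C^\infty_\rc(\bM)$ and $\|V\|_\cK=\|V\|_{L^1(\bM)}$ for $V:=\fP^\oo_\sIR\partial_\uv^r\partial_\sIR G_{\uv,\sIR}$. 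Expanding $\fP^\oo_\sIR=\sum_{p=0}^\oo\binom{\oo}{p}(-1)^p[\sIR]^{2p}\Delta^p$, it remains to bound $\big\|[\sIR]^{2p}\Delta^p\partial_\uv^r\partial_\sIR G_{\uv,\sIR}\big\|_{L^1(\bM)}$ by the right-hand side of~\eqref{eq:bound_G} for each $0\le p\le\oo$.

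For this I would apply the Leibniz rule to distribute the $2p$ derivatives of $\Delta^p$ over the three factors and insert the elementary estimates $\big|\partial^a\big(\partial_\uv^r\chi_\uv(|x|^\sigma)\big)\big|\lesssim\uv^{-r}|x|^{-|a|}$ and $\big|\partial^a\big(\partial_\sIR\chi_\sIR(|x|^\sigma)\big)\big|\lesssim\big(\sIR(1-\sIR)\big)^{-1}|x|^{-|a|}$ on the respective shells, $|\partial^a\chi_\uv(|x|^\sigma)|\lesssim|x|^{-|a|}$, together with the standard bounds $|\partial^a G(x)|\lesssim|x|^{\sigma-\rdim-|a|}$ for $0<|x|\le1$ and $|\partial^a G(x)|\lesssim|x|^{-\rdim-\sigma}$ for $|x|\ge1$, both of which follow from $\widehat G(k)=(1+|k|^\sigma)^{-1}$. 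The argument then splits into cases. If $\sIR\le\uv/2$, then $G_{\uv,\sIR}=G_\uv$ by the remark after Definition~\ref{dfn:kernel_G}, so $\partial_\sIR G_{\uv,\sIR}=0$ and there is nothing to prove. If $r\ge1$, the two shells above are disjoint unless $\sIR$ and $\uv$ are comparable, so one may assume $\sIR\sim\uv$, hence $[\sIR]\sim[\uv]\le1$; in the remaining case $r=0$ one may assume $\uv\lesssim\sIR$. After these reductions the integrand is concentrated near a single length scale $L$, with $L=[\uv]\sim[\sIR]\le1$ when $r\ge1$, and $L=(\sIR/(1-\sIR))^{1/\sigma}$ when $r=0$; here $L\le1$ and $1-\sIR\sim1$ if $\sIR\lesssim1/2$, while $L\ge1$ and $[\sIR]\sim1$ if $\sIR$ is close to $1$.

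I would then split the $L^1$ integral over $\{|x|\le1\}$ and $\{|x|\ge1\}$, using the near-origin bound on $G$ on the first set and the decay bound on the second. On $\{|x|\le1\}$ each of the $2p$ distributed derivatives gains a factor $L^{-1}$ wherever it lands, the cutoff derivative has size $\sim L^{-\sigma}$ (using $1-\sIR\sim1$), $|G|\sim L^{\sigma-\rdim}$, the shell has volume $\sim L^\rdim$, and $[\sIR]^{2p}\sim L^{2p}$ (since $[\sIR]\sim L$ here), so the contributions multiply to $\uv^{-r}\cdot L^{2p}\cdot L^{-2p}\cdot L^{-\sigma}\cdot L^{\sigma-\rdim}\cdot L^{\rdim}=\uv^{-r}$, the factor $\uv^{-r}$ being absent and the product equal to $1$ when $r=0$. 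On $\{|x|\ge1\}$ only the $r=0$, $\sIR\nearrow1$ regime contributes: there $[\sIR]^{2p}\sim1$, the cutoff derivative has size $(1-\sIR)^{-1}\sim L^\sigma$, $|G|\sim L^{-\rdim-\sigma}$, the shell volume is $\sim L^\rdim$, only the derivatives falling on the cutoff gain $L^{-1}$, and the product is $\le L^\sigma\cdot L^{-\rdim-\sigma}\cdot L^\rdim=1$. In all cases one obtains $\big\|[\sIR]^{2p}\Delta^p\partial_\uv^r\partial_\sIR G_{\uv,\sIR}\big\|_{L^1}\lesssim\uv^{-r}=[\uv]^{-\sigma r}$ when $r\ge1$ — which is $\lesssim[\uv]^{(\varepsilon-\sigma)r}[\sIR]^{-\varepsilon r}$ because $\sIR\sim\uv$ forces $[\sIR]\sim[\uv]$ — and $\lesssim1$ when $r=0$, giving~\eqref{eq:bound_G}.

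The delicate point is the far-field regime $\sIR\nearrow1$ inside the $r=0$ case. There the supporting shell moves to $L=(\sIR/(1-\sIR))^{1/\sigma}\to\infty$, the derivative $\partial_\sIR\chi_\sIR$ is as large as $(1-\sIR)^{-1}\sim L^\sigma$, and the shell volume is $\sim L^\rdim$, so these would overwhelm the estimate were it not for the decay $|G(x)|\lesssim|x|^{-\rdim-\sigma}$ — exactly the slow decay mentioned in the introduction, which makes the product $O(1)$ with nothing to spare. One has to check that the extra $2p$ derivatives generated by $\fP^\oo_\sIR\Delta^p$ do not create positive powers of $L$: those landing on $G$ are harmless because $|\partial^aG(x)|\lesssim|x|^{-\rdim-\sigma}$ already, while those landing on the cutoffs produce factors $L^{-1}$ that, together with the fact that $[\sIR]^{2p}\sim1$ in this regime, keep the total $\lesssim1$. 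The remaining (near-origin) cases are routine once one keeps track of the homogeneities $|x|^\sigma\sim\uv$, respectively $|x|^\sigma\sim\sIR$, of the inner cutoff together with the scaling $|\partial^aG(x)|\lesssim|x|^{\sigma-\rdim-|a|}$.
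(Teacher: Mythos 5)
Your proof is correct and follows essentially the same approach as the paper's: differentiate the explicit formula in Definition~\ref{dfn:kernel_G}, use pointwise bounds on $G$ and on the cutoff functions on the shells where they are supported, and estimate the $L^1$ norm, with the same case split $\sIR\lesssim 1/2$ versus $\sIR\nearrow1$ and with the decay $|\partial^aG(x)|\lesssim|x|^{-\rdim-\sigma-|a|}$ handling the far-field regime. Your explicit identification of the supporting shells and the single scale $L$, together with the cleaner pointwise bound $|\partial^a\partial_\uv^r\chi_\uv(|x|^\sigma)|\lesssim\uv^{-r}|x|^{-|a|}$ on the shell, makes the homogeneity count more transparent than the paper's (whose displayed intermediate bound on $\partial_\uv^r\chi_\uv$ appears to contain a misprint in the exponent, though its final $L^1$ estimate is correct and agrees with yours).
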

\begin{proof}
Let $a\in\bN_0^\rdim$. It holds that $|\partial^a G(x)|\lesssim |x|^{\sigma-\rdim-|a|}$ uniformly for $|x|\leq 2$. If $\sigma\in2\bN_+$, then $\partial^a G$ decays fast at infinity. In general, $|\partial^a G(x)|\lesssim |x|^{-\sigma-\rdim-|a|}$ uniformly for $|x|> 1$. Moreover, we have
\begin{equation}
 |\partial^a \partial_\uv^r \chi_\uv(|x|^\sigma)|\lesssim [\uv]^{(\varepsilon-\sigma)r} |x|^{-\varepsilon r-|a|},
 \qquad
 |\partial^a \partial_\sIR\chi_\sIR(|x|^\sigma)|\lesssim |x|^{\sigma-|a|}/\sIR^2
\end{equation}
uniformly in $\uv\in(0,1/2]$, $\sIR\in(0,1]$ and $x\in\bM$. Furthermore, $|\partial^a \partial_\sIR\chi_\sIR(|x|^\sigma)|$ vanishes unless $\sIR<(1-\sIR)|x|^\sigma\leq 2\sIR$. Using the above properties and considering separately $\sIR\in(0,1/2]$ and $\sIR\in(1/2,1]$ we obtain $\|\partial^a \partial_\uv^r \partial_\sIR G_{\uv,\sIR}\|_\cK \lesssim [\uv]^{(\varepsilon-\sigma)r}[\sIR]^{-\varepsilon r-|a|}$. This implies the lemma since $\fP_\sIR^\oo = (1-[\sIR]^2\Delta_x)^\oo$.
\end{proof}

\begin{rem}\label{rem:tilde_R}
Given $\oo\in\bN_0$ there exists $\tilde R>0$ such that 
\begin{equation}
 \frac{(1+m)\,\sigma}{\varrho_{3\varepsilon}(i,m)}\leq \tilde R^{1/2},
 \qquad
 \|\fP^{2\oo}_\sIR \partial_\uv^r \partial_\sIR G_{\uv,\sIR}\|_\cK \leq 1/6\,\tilde R^{1/2}\, [\uv]^{(\varepsilon-\sigma)r}\,[\sIR]^{-\varepsilon}
\end{equation}
for all $i,m\in\bN_0$ such that $\varrho(i,m)>0$ and all $r\in\{0,1\}$, $\uv\in(0,1/2]$, $\sIR\in(0,1]$. The first of the above bounds implies, in particular, that $\sigma/(\dim(\varPhi)+9\varepsilon)\leq\tilde R^{1/2}$.
\end{rem}

\begin{thm}\label{thm:deterministic_convergence}
Fix $\oo\in\bN_0$. Let $\tilde R>1$ as in Remark~\ref{rem:tilde_R}. Assume that for $r\in\{0,1\}$, $s=0$, all $i,m\in\bN_0$ such that $\varrho(i,m)\leq0$ and all $\uv\in(0,1/2]$, $\sIR\in(0,1]$ the following bound holds 
\begin{equation}\label{eq:bound_deterministic_main}
 \|K_{\sIR}^{\ast\oo,\otimes(1+m)}\ast \partial_\uv^r\partial_\sIR^s F_{\uv,\sIR}^{i,m}\|_{\cV^m}
 \!\leq\! \frac{\tilde R^{1-s/2+2(3i-m)}}{4(1+i)^2\,4(1+m)^{2-s}} [\uv]^{(\varepsilon-\sigma)r}\, [\sIR]^{\varrho_{3\varepsilon}(i,m)-\sigma s}.
\end{equation}
Then the above bound holds for all $r,s\in\{0,1\}$, $i,m\in\bN_0$, $\uv\in(0,1/2]$, $\sIR\in(0,1]$.
\end{thm}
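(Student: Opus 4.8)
The plan is to run a continuity/bootstrap argument in the flow parameter $\sIR$, upgrading the hypothesis (which covers the \emph{relevant} coefficients, $\varrho(i,m)\le 0$, only for $s=0$) to the full statement. The key observation is that the recursive construction of Section~3 builds $F^{i,m}_{\uv,\sIR}$ by integrating $\partial_\sIR F^{i,m}_{\uv,\sIR}$ from $0$, and $\partial_\sIR F^{i,m}_{\uv,\sIR}$ is given by the quadratic expression on the RHS of~\eqref{eq:intro_flow_eq_i_m}, which involves only coefficients $F^{j,1+k}$ and $F^{i-j,m-k}$ with $j\le i$, $k\le m$ and (crucially) strictly lower in the recursive order — either $j<i$, or $j=i$ with $k\ge 1$ so $1+k>m-k$ forces one of the two factors to have a strictly larger second index, or $i-j<i$. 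So one proceeds by induction on the recursion order $(i,m)$ (descending in $m$ for fixed $i$), and within each step the bound on $\partial_\sIR F^{i,m}_{\uv,\sIR}$ follows \emph{algebraically} from bounds already available for the factors, via Lemma~\ref{lem:fB1_bound}, Remark~\ref{rem:fB_Ks}, and the kernel estimate of Remark~\ref{rem:tilde_R} for $\fP^{2\oo}_\sIR\partial_\sIR G_{\uv,\sIR}$ (and Lemma~\ref{lem:kernel_G} for the $\partial_\uv$ case).

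First I would fix $(i,m)$ and assume inductively that~\eqref{eq:bound_deterministic_main} holds with $s=0$ for all coefficients appearing on the RHS of~\eqref{eq:intro_flow_eq_i_m} — i.e.\ for $F^{j,1+k}_{\uv,\sIR}$ and $F^{i-j,m-k}_{\uv,\sIR}$, which is legitimate because these are constructed before $F^{i,m}_{\uv,\sIR}$; for the \emph{relevant} ones this is exactly the hypothesis, and for the \emph{irrelevant} ones it is the conclusion of a previous step of the same induction. Apply $K_\sIR^{\ast\oo,\otimes(1+m)}\ast$ to~\eqref{eq:intro_flow_eq_i_m}, rewrite the RHS using $\fB$ and the intertwining identity of Remark~\ref{rem:fB_Ks} (which converts $K_\sIR^{\ast\oo}\ast$ acting on a $\fB$ into $\fB$ with $\fP^{2\oo}_\sIR$ on the kernel slot and $K_\sIR^{\ast\oo}\ast$ on each argument slot), then bound in $\sV^m$ using Lemma~\ref{lem:fB1_bound}. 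This yields
\[
 \|K_{\sIR}^{\ast\oo,\otimes(1+m)}\ast \partial_\uv^r\partial_\sIR F_{\uv,\sIR}^{i,m}\|_{\sV^m}
 \le \sum_{j,k}(1+k)\,
 \|\fP^{2\oo}_\sIR\partial_\uv^{r}\partial_\sIR G_{\uv,\sIR}\|_\cK\,
 \|K_\sIR^{\ast\oo,\otimes}\ast F^{j,1+k}_{\uv,\sIR}\|_{\sV^{1+k}}\,
 \|K_\sIR^{\ast\oo,\otimes}\ast F^{i-j,m-k}_{\uv,\sIR}\|_{\sV^{m-k}},
\]
where for $r=1$ one also distributes the $\partial_\uv$ by the Leibniz rule over the three factors and uses the $r=1$ part of the inductive hypothesis together with Lemma~\ref{lem:kernel_G}/Remark~\ref{rem:tilde_R}. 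Plugging in the inductive bounds and the kernel estimate $\|\fP^{2\oo}_\sIR\partial_\sIR G_{\uv,\sIR}\|_\cK\le \tfrac16\tilde R^{1/2}[\uv]^{(\varepsilon-\sigma)r}[\sIR]^{-\varepsilon}$, the $[\sIR]$-powers combine as $[\sIR]^{\varrho_\varepsilon(j,1+k)+\varrho_\varepsilon(i-j,m-k)-\varepsilon}=[\sIR]^{\varrho_\varepsilon(i,m)-\sigma}$ (this is the additivity of $\varrho_\varepsilon$ in its arguments, modulo the constants, which is built into Definition~\ref{dfn:relevant_irrelevant}), matching the target exponent $\varrho_\varepsilon(i,m)-\sigma\cdot 1$ for $s=1$; the combinatorial prefactors must be shown to sum to at most the target constant $\tilde R^{1/2+2(3i-m)}/\big(4(1+i)^2\,4(1+m)\big)$ — here one uses $(1+k)$ against the $4(1+k)^2$ and $4(1+(m-k))^2$ denominators from the two factors, the factor $\tilde R^{2(3j-1-k)+2(3(i-j)-(m-k))}=\tilde R^{2(3i-m)-2}$ against $\tilde R^{1/2}$ from the kernel (giving $\tilde R^{2(3i-m)-3/2}<\tilde R^{1/2+2(3i-m)}$ since $\tilde R>1$), and standard comparisons like $\sum_{j\le i}1/\big((1+j)^2(1+i-j)^2\big)\lesssim 1/(1+i)^2$ with a numerical constant absorbed by the $\tfrac16$.

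Second I would integrate: $K_{\sIR}^{\ast\oo,\otimes(1+m)}\ast\partial_\uv^r F^{i,m}_{\uv,\sIR}=K_{\sIR}^{\ast\oo,\otimes(1+m)}\ast\partial_\uv^r F^{i,m}_{\uv}+\int_0^\sIR K_{\sIR}^{\ast\oo,\otimes(1+m)}\ast\partial_\uv^r\partial_\uIR F^{i,m}_{\uv,\uIR}\,\rd\uIR$, and — since $K_{\sIR}^{\ast\oo}=K_{\sIR,\uIR}^{\ast\oo}\ast K_{\uIR}^{\ast\oo}$ with $\|K_{\sIR,\uIR}\|_\cK=1$ for $\sIR\ge\uIR$ by Lemma~\ref{lem:kernel_u_v}, and the total masses of $K$-powers are $1$ — bound $\|K_{\sIR}^{\ast\oo,\otimes(1+m)}\ast(\cdot)\|_{\sV^m}\le \|K_{\uIR}^{\ast\oo,\otimes(1+m)}\ast(\cdot)\|_{\sV^m}$ inside the integral, apply the $s=1$ bound just obtained, and integrate $\int_0^\sIR[\uIR]^{\varrho_\varepsilon(i,m)-\sigma}\,\rd\uIR=\sigma\int_0^{[\sIR]}t^{\varrho_\varepsilon(i,m)-1}\,\rd t=(\sigma/\varrho_\varepsilon(i,m))[\sIR]^{\varrho_\varepsilon(i,m)}$, which is exactly why the factor $(1+m)\sigma/\varrho_\varepsilon(i,m)\le\tilde R^{1/2}$ appears in Remark~\ref{rem:tilde_R}: it converts the $s=1$ constant into the $s=0$ constant. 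For the irrelevant coefficients ($\varrho(i,m)>0$, hence $\varrho_\varepsilon(i,m)>0$) the boundary term $F^{i,m}_\uv$ vanishes (Remark~\ref{rem:explicit_coefficients}: the only $\sIR$-independent nonzero coefficients are $F^{0,0}$ and $F^{1,3}$, both relevant), so this closes the induction for the irrelevant $s=0$ bound as well; for the relevant ones the $s=0$ bound is the hypothesis and needs no re-derivation.

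The main obstacle is bookkeeping the constants: one must verify that the explicit prefactor $\tilde R^{1-s/2+2(3i-m)}/\big(4(1+i)^2\,4(1+m)^{2-s}\big)$ is \emph{reproduced} — not merely bounded up to an $(i,m)$-dependent constant — by the sum over $j,k$ of products of two such prefactors times the kernel constant $\tfrac16\tilde R^{1/2}$ and the multiplicity $(1+k)$, for both $s=1$ (from the flow equation) and $s=0$ (from the $\sigma/\varrho_\varepsilon$ integration). This is the familiar "self-improving estimate" mechanism of the Polchinski-flow approach, and the constants $\tilde R$, the $1/6$, the $4(1+i)^2$, $4(1+m)^{2-s}$ have clearly been chosen precisely so the induction closes; the work is to check each inequality (the $\tilde R$-power drop $2(3i-m)-3/2<2(3i-m)+1/2$, the summability of $\sum_j(1+j)^{-2}(1+i-j)^{-2}$ and $\sum_k(1+k)^{-1}(1+m-k)^{-2}$ against the $1/6$ and the extra $1/4$'s), separately in the regimes $\sIR\in(0,1/2]$ and $\sIR\in(1/2,1]$ as in Lemma~\ref{lem:kernel_G}. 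No new analytic idea is needed beyond Lemmas~\ref{lem:kernel_u_v}, \ref{lem:fB1_bound}, \ref{lem:kernel_G} and Remarks~\ref{rem:fB_Ks}, \ref{rem:tilde_R}.
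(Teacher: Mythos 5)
Your proposal follows the same approach as the paper's proof: induction in the recursive order $(i,m)$, bound $\partial_\sIR F^{i,m}_{\uv,\sIR}$ via the flow equation together with Lemma~\ref{lem:fB1_bound}, Remark~\ref{rem:fB_Ks} and the kernel estimate of Remark~\ref{rem:tilde_R}, then integrate in $\sIR$ using Lemma~\ref{lem:kernel_u_v} to compare $K_\sIR$ with $K_\uIR$, observe that the boundary term $F^{i,m}_\uv$ vanishes for irrelevant $(i,m)$, and use $\int_0^\sIR[\uIR]^{\varrho_\varepsilon-\sigma}\rd\uIR=(\sigma/\varrho_\varepsilon)[\sIR]^{\varrho_\varepsilon}$ with $(1+m)\sigma/\varrho_\varepsilon\le\tilde R^{1/2}$ to close. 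One small arithmetic slip: in tracking the $\tilde R$-powers you dropped the $\tilde R^{1}\cdot\tilde R^{1}$ contributions from the two factors' prefactors $\tilde R^{1-s/2+\ldots}$ (with $s=0$), so the product is actually $\tilde R^{2(3i-m)}$, not $\tilde R^{2(3i-m)-2}$; with the kernel's $\tilde R^{1/2}$ this matches the $s=1$ target power $\tilde R^{1/2+2(3i-m)}$ exactly, rather than with room to spare as you claimed — so the constant-bookkeeping is tight, not slack, exactly as the paper's combinatorial identity shows.
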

\begin{rem}
By Theorems~\ref{thm:cumulants},~\ref{thm:probabilistic_bounds},~\ref{thm:deterministic_taylor},~\ref{thm:deterministic_long_range} (applied in this order), there exists a choice of counterterms such that the assumption of the above theorem is satisfied for some random $\tilde R>0$ such that $\bE \tilde R^n <\infty$ for all $n\in\bN_+$.
\end{rem}

\begin{rem}
If the bound~\eqref{eq:bound_deterministic_main} holds for $\oo=\ooo$, then it also holds for all $\oo>\ooo$.
\end{rem}

\begin{rem}
Let us comment on the assumption in two simple cases. For $i=0$, $m=0$ the bound~\eqref{eq:bound_deterministic_main} says that \mbox{$\|K_\sIR^{\ast\oo}\ast\xi\|_{\cV}\leq \tilde R/4^2\,[\sIR]^{-\dim(\xi)-3\varepsilon}$}, which is known to be true for $\oo=\rdim$ and some random $\tilde R>0$ such that $\bE \tilde R^n<\infty$ for all $n\in\bN_+$. For $i=1$, $m=3$ the bound~\eqref{eq:bound_deterministic_main} says that $\|K_\sIR^{\ast\oo,\otimes4}\ast F^{1,3}_{\uv,\sIR}\|_{\cV^3}\leq \tilde R/4^5\,[\sIR]^{-\varepsilon}$. It is easy to see that the above bound is satisfied for $\tilde R=4^5$ using the fact that $F^{1,3}_{\uv,\sIR}(x;x_1,x_2,x_3)=\delta_\bM(x-x_1)\delta_\bM(x-x_2)\delta_\bM(x-x_3)$.
\end{rem}
\begin{rem}
For $\lambda\leq \tilde R^{-6}$ the bounds~\eqref{eq:bound_deterministic_main} imply convergence of the series~\eqref{eq:solution_series} defining $\varPhi_\uv$, which is our candidate for the solution of Eq.~\eqref{eq:intro_mild}. In fact, one easily proves that $\|K^{\ast\oo}\ast\partial_\uv^r \varPhi_\uv\|_{\cV}\leq R\,[\uv]^{(\varepsilon-\sigma)r}$ for $r\in\{0,1\}$ and all $\uv\in(0,1]$. This implies the existence of the limit $\lim_{\uv\searrow0}\varPhi_\uv$ in $\sS'(\bM)$. In Sec.~\ref{sec:solution} we will prove that $\varPhi_\uv$ solves Eq.~\eqref{eq:solution_series} and $\lim_{\uv\searrow0}\varPhi_\uv$ exists in $\sC^{-\dim(\varPhi)-9\varepsilon}(\bM)$.
\end{rem}

\begin{proof}
Fix some $i_\circ\in\bN_+$, $m_\circ\in\bN_0$ and assume that the statement with $s=0$ is true for all $i,m\in\bN_+$ such that either $i<i_\circ$ or $i=i_\circ$ and $m>m_\circ$. We shall prove the statement for $i=i_\circ$, $m=m_\circ$. We first consider the case $s=1$. Using the flow equation~\eqref{eq:intro_flow_eq_i_m} and the notation introduced in Sec.~\ref{sec:topology} we obtain
\begin{multline}\label{eq:flow_deterministic_i_m}
 \partial_\sIR \partial_\uv^r F^{i,m}_{\uv,\sIR}
 =
 -\frac{1}{m!}
 \sum_{\pi\in\cP_m}\sum_{j=0}^i\sum_{k=0}^m\sum_{u+v+w=r} 
 (1+k)\,
 \\
 \times \fY_\pi\fB\big(\partial_\uv^u\partial_\sIR G_{\uv,\sIR},\partial_\uv^v F^{j,1+k}_{\uv,\sIR},\partial_\uv^wF^{i-j,m-k}_{\uv,\sIR}\big),
\end{multline}
where $r,u,v,w\in\{0,1\}$. By Remark~\ref{rem:fY_pi}, Lemma~\ref{lem:fB1_bound} and Remark~\ref{rem:fB_Ks} we get
\begin{multline}
 \|K_{\sIR}^{\ast\oo,\otimes(1+m)}\ast\partial_\sIR \partial_\uv^r F^{i,m}_{\uv,\sIR}\|_{\cV^m}
 \leq 
 (1+m)~\sum_{u+v+w=r} \|\fP_\sIR^\oo\partial_\uv^u\partial_\sIR G_{\uv,\sIR}\|_\cK 
 \\
 \times \sum_{j=0}^i\sum_{k=0}^m\|K_{\sIR}^{\ast\oo,\otimes(2+k)}\ast \partial_\uv^v F^{j,1+k}_{\uv,\sIR}\|_{\cV^{1+k}}~
 \|K_{\sIR}^{\ast\oo,\otimes(1+m-k)}\partial_\uv^wF^{i-j,m-k}_{\uv,\sIR}\|_{\cV^{m-k}}.
\end{multline}
The statement of the theorem with $s=1$ follows now from the induction hypothesis, Remark~\ref{rem:tilde_R}, 
the inequality
\begin{equation}
 \varrho_{3\varepsilon}(i,m)-\sigma \leq \varrho_{3\varepsilon}(j,1+k) + \varrho_{3\varepsilon}(i-j,m-k)-\varepsilon
\end{equation}
and the bound
\begin{equation}
 \sum_{j=0}^i\sum_{k=0}^m \frac{\tilde R^{1+2(3j-k-1)}}{4(1+j)^2\,4(2+k)^2}
 \frac{\tilde R^{1+2(3i-3j-m+k)}}{4(1+i-j)^2\,4(1+m-k)^2}
 \leq \frac{\tilde R^{2(3i-m)}}{4(1+i)^2\,4(1+m)^2},
\end{equation}
which is a consequence of the inequality
\begin{equation}
 \sum_{j=0}^i \frac{1}{4(1+j)^2\,4(1+i-j)^2} \leq \frac{1}{4(1+i)^2}.
\end{equation}
In order to prove the statement of the theorem for $s=0$ and $i,m\in\bN_0$ such that $\varrho(i,m)>0$ we use the identity
\begin{equation}\label{eq:determinisitc_intagrate}
 F^{i,m}_{\uv,\sIR} = F^{i,m}_{\uv} + \int_0^\sIR \partial_\uIR F^{i,m}_{\uv,\uIR}\,\rd\uIR.
\end{equation}
We first observe that $F^{i,m}_{\uv,0}=F^{i,m}_\uv=0$ if $\varrho(i,m)>0$. Next, we note that
\begin{equation}\label{eq:determinisitc_bound_u_v}
 \|K_{\sIR}^{\ast\oo,\otimes(1+m)}\ast
\partial_\uv^r\partial_\uIR F_{\uv,\uIR}^{i,m}\|_{\cV^m}\leq\|K_{\uIR}^{\ast\oo,\otimes(1+m)}\ast
\partial_\uv^r\partial_\uIR F_{\uv,\uIR}^{i,m}\|_{\cV^m}
\end{equation}
for $\uIR\leq\sIR$ by Lemma~\ref{lem:kernel_u_v} and Remark~\ref{rem:Vm_K}. The statement of the theorem with $s=0$ follows now from the statement with $s=1$ and the bounds
\begin{equation}
 \|K_{\sIR}^{\ast\oo,\otimes(1+m)}\ast \partial_\uv^r F_{\uv,\sIR}^{i,m}\|_{\cV^m}
 \\
 \leq
 \int_{0}^{\sIR}
 \|K_{\uIR}^{\ast\oo,\otimes(1+m)}\ast \partial_\uv^r\partial_\uIR F_{\uv,\uIR}^{i,m}\|_{\cV^m}\,\rd\uIR
\end{equation}
and
\begin{equation}
 \int_{0}^\sIR [\uIR]^{\varrho_{3\varepsilon}(i,m)-\sigma}\,\rd\uIR \leq  \frac{\sigma\,[\sIR]^{\varrho_{3\varepsilon}(i,m)}}{\varrho_{3\varepsilon}(i,m)}
 \leq \frac{\tilde R^{1/2}\,[\sIR]^{\varrho_{3\varepsilon}(i,m)}}{1+m}
\end{equation}
We stress that the first inequality in the last bound is valid only if $\varrho_{3\varepsilon}(i,m)> 0$, which holds provided $\varrho(i,m)>0$ by Remark~\ref{rem:rho}.
\end{proof}

\section{Generalized effective force coefficients}\label{sec:modified_coefficients}

In this section we introduce generalized coefficients $F^{i,m,a}_{\uv,\sIR}$ and $f^{i,m,a}_{\uv,\sIR}$. Assuming certain bounds for $f^{i,m,a}_{\uv,\sIR}$, which can be verified using probabilistic methods, in Sec.~\ref{sec:deterministic_relavant} and~\ref{sec:long_range} we prove bounds for $F^{i,m,a}_{\uv,\sIR}$, which in particular imply the bounds~\eqref{eq:bound_deterministic_main} for all $F^{i,m}_{\uv,\sIR}=F^{i,m,0}_{\uv,\sIR}$ such that $\varrho(i,m)\leq0$. 

\begin{dfn}\label{dfn:modified_coefficients}
We denote the set of multi-indices by $\frM=\bN_0^\rdim$. For $m\in\bN_+$ and $a=(a_1,\ldots,a_m)\in\frM^m$ we define $|a|:=|a_1|+\ldots+|a_m|$. We also set $\frM^0\equiv\{0\}$. For $a\in\frM$ we define $\cX^a\in C^\infty(\bM)$ by $\cX^a(x):=x^a$ and for $a\in\frM^m$ we define $\cX^{m,a}\in C^\infty(\bM^{1+m})$ by
\begin{equation}
 \cX^{m,a}(x;y_1,\ldots,y_m)
 := 
 (x-y_1)^{a_1} \ldots (x-y_m)^{a_m}.
\end{equation}
For $i,m\in\bN_0$, $a\in\frM^m$ we define $F^{i,m,a}_{\uv,\sIR}\in\sS'(\bM^{1+m})$ and $f^{i,m,a}_{\uv,\sIR}\in\sS'(\bM)$ by
\begin{equation}
 F^{i,m,a}_{\uv,\sIR}:= \cX^{m,a} F^{i,m}_{\uv,\sIR},
 \qquad
 \langle f^{i,m,a}_{\uv,\sIR},\psi\rangle:=\langle F^{i,m,a}_{\uv,\sIR},\psi\otimes 1_\bM^{\otimes m}\rangle,
\end{equation}
where $\psi\in\sS(\bM)$ and $1_\bM(x)=1$ for all $x\in\bM$. The force coefficients $F^{i,m,a}_\uv$ and $f^{i,m,a}_\uv$ are defined analogously. The effective force coefficients $F^{i,m,a}_{\uv,\sIR}$ or $f^{i,m,a}_{\uv,\sIR}$ such that $\varrho(i,m)+|a|\leq 0$ are called relevant. The remaining coefficients are called irrelevant. The finite collection of all relevant coefficients $f^{i,m,a}_{\uv,\sIR}$ such that $m\leq 3i$ is called the enhanced noise (recall that $f^{i,m,a}_{\uv,\sIR}=0$ if $m>3i$).
\end{dfn}

\begin{rem}
Let us list the non-zero force coefficients $f^{i,m,a}_{\uv\phantom{0}}=f^{i,m,a}_{\uv,0}\in\sS'(\bM)$:
\begin{equation}
 f^{0,0,0}_\uv = \xi(x),\quad
 f^{1,3,0}_\uv = 1,\quad
 f^{i,1,0}_\uv = c^{[i]}_\uv,
 \quad i\in\{1,\ldots,i_\sharp\}.
\end{equation}
\end{rem}

\begin{rem}\label{rem:taylor}
Given $i,m\in\bN_0$, $a\in\frM^m$ such that $\varrho(i,m)+|a|\in(-\ro,1-\ro]$ for some $\ro\in\bN_+$ the relevant coefficient $F^{i,m,a}_{\uv,\sIR}$ can be expressed in terms of irrelevant coefficients $F^{i,m,b}_{\uv,\sIR}$, $|b|=\ro$, and relevant coefficients $f^{i,m,b}_{\uv,\sIR}$, $|b|<\ro$. The above fact plays a crucial role in the proof of bounds for the relevant coefficients $F^{i,m,a}_{\uv,\sIR}$ given in Sec.~\ref{sec:deterministic_relavant}. In Sec.~\ref{sec:taylor} we show that it is a consequence of the Taylor theorem. The relevant coefficients $f^{i,m,a}_{\uv,\sIR}\in\sS'(\bM)$, which are elements of the enhanced noise, are bounded using probabilistic methods. On the other hand, the irrelevant coefficients $F^{i,m,a}_{\uv,\sIR}\in\sS'(\bM^{1+m})$ are bounded deterministically using the same strategy as in the proof of Theorem~\ref{thm:deterministic_convergence}. 
\end{rem}

\begin{rem}
In the case $\rdim=5$ and $\sigma=2$ the enhanced noise consists of $f^{0,0,0}_{\uv,\sIR}=\xi$, $f^{1,0,0}_{\uv,\sIR}$, $f^{1,1,0}_{\uv,\sIR}$, $f^{1,2,0}_{\uv,\sIR}$, $f^{1,3,0}_{\uv,\sIR}=1$, $f^{2,0,0}_{\uv,\sIR}$, $f^{2,1,0}_{\uv,\sIR}$. The following coefficients are relevant:
\begin{equation}
\begin{gathered}
 F^{0,0}_{\uv,\sIR}(x)=f^{0,0,0}_{\uv,\sIR}(x)=\xi(x),
 \\
 F^{1,0}_{\uv,\sIR}(x)=f^{1,0,0}_{\uv,\sIR}(x),
 \qquad
 F^{1,1}_{\uv,\sIR}(x;x_1)=f^{1,1,0}_{\uv,\sIR}(x)\delta_\bM(x-x_1),
 \\
 F^{1,2}_{\uv,\sIR}(x;x_1,x_2)=f^{1,2,0}_{\uv,\sIR}(x)\delta_\bM(x-x_1)\delta_\bM(x-x_2),
 \\
 F^{1,3}_{\uv,\sIR}(x;x_1,x_2,x_3)=f^{1,3,0}_{\uv,\sIR}(x)\delta_\bM(x-x_1)\delta_\bM(x-x_2)\delta_\bM(x-x_3),
 \\
 F^{2,0}_{\uv,\sIR}(x)=f^{2,0,0}_{\uv,\sIR}(x),
 \qquad
 F^{2,1}_{\uv,\sIR}(x;x_1).
\end{gathered}
\end{equation}
The coefficient $F^{2,1}_{\uv,\sIR}(x;x_1)$ is non-local in the sense that it is not proportional to $\delta_\bM(x-x_1)$. The coefficient $F^{2,1}_{\uv,\sIR}$ can be expressed in terms of the relevant coefficient $f^{2,1,0}_{\uv,\sIR}$ and the irrelevant coefficient $F^{2,1,a}_{\uv,\sIR}$ with $|a|=1$. Since the coefficients $F^{1,m}_{\uv,\sIR}$ are local the coefficients $F^{1,m,a}_{\uv,\sIR}$ vanish identically for $m\in\bN_+$ and $a\neq 0$. Explicit expressions for all of the above coefficients can be easily deduced from expressions given in Remark~\ref{rem:explicit_coefficients}.
\end{rem}

For a list of multi-indices $a=(a_1,\ldots,a_m)\in\frM^m$ and a permutation $\pi\in\cP_n$ we set $\pi(a):=(a_{\pi(1)},\ldots,a_{\pi(m)})$. Note that
\begin{equation}\label{eq:poly_perm}
 \cX^{m,a}(x;y_1,\ldots,y_m)
 \\
 =
 \cX^{m,\pi(a)}(x;y_{\pi(1)},\ldots,y_{\pi(m)}).
\end{equation}
We claim that for all $y_0,z\in\bM$ it holds that
\begin{multline}\label{eq:poly_binom}
 \cX^{m,a}(x;y_1,\ldots,y_m)
 =
 \sum_{b,c,d} \frac{a!}{b!c!d!}\,
 \cX^{1+k,(b_{1+k}+\ldots+b_m,b_1,\ldots,b_k)}(x;y_0,y_1,\ldots,y_k)
 \\
 \times\cX^{1,c_{1+k}+\ldots+c_m}(y_0;z)
 \,\cX^{m-k,(d_{1+k},\ldots,d_m)}(z;y_{1+k},\ldots,y_m),
\end{multline}
where the sum is over all $b,c,d\in\frM^m$ such that $b_p=a_p$, $c_p=0$, $d_p=0$ for $p\in\{1,\ldots,k\}$ and $b_p+c_p+d_p=a_p$ for $p\in\{1+k,\ldots,m\}$. Throughout the paper we use the following schematic notation for the sums of the above type
\begin{multline}\label{eq:poly_binom_notation}
 \cX^{m,a}(x;y_1,\ldots,y_m)
 =\sum_{b+c+d=a} \frac{a!}{b!c!d!}\,\cX^{1+k,b}(x;y_0,y_1,\ldots,y_k)
 \\
 \times
 \cX^{c}(y_0-z) \cX^{m-k,d}(z;y_{1+k},\ldots,y_m).
\end{multline}
Let us note that the formula~\eqref{eq:poly_binom} above was obtained by rewriting $(x-y_p)^{a_p}$, $p\in\{1+k,\ldots,m\}$, as
\begin{equation}
 (x-y_p)^{a_p} 
 =\sum_{\substack{b_p,c_p,d_p\\b_p+c_p+d_p=a_p}}\frac{a_p!}{b_p!c_p!d_p!}~(x-y_0)^{b_p}(y_0-z)^{c_p}(z-y_p)^{d_p}.
\end{equation}

Using the flow equation~\eqref{eq:intro_flow_eq_i_m} and the identity~\eqref{eq:poly_binom} we show that the effective force coefficients $F^{i,m,a}_{\uv,\sIR}$ satisfy the flow equation
\begin{multline}\label{eq:flow_deterministic_i_m_a}
 \partial_\sIR^{\phantom{i}}\partial_\uv^r F^{i,m,a}_{\uv,\sIR}
 =
 -\frac{1}{m!}
 \sum_{\pi\in\cP_m}\sum_{j=0}^i\sum_{k=0}^m\,(1+k)\sum_{u+v+w=r}\frac{r!}{u!v!w!}
 \sum_{b+c+d=\pi(a)}\frac{a!}{b!c!d!}
 \\
 \times\fY_\pi\fB\big(\cX^c\partial_\uv^u\partial_\sIR G_{\uv,\sIR},\partial_\uv^v F^{j,1+k,b}_{\uv,\sIR},\partial_\uv^w F^{i-j,m-k,d}_{\uv,\sIR}\big),
\end{multline}
where $r\in\bN_0$, $u,v,w\in\bN_0$ and the sum over multi-indices $b,c,d$ is restricted to the set specified below Eq.~\eqref{eq:poly_binom}. Note that $\frac{r!}{u!v!w!}=1$ above if $r=1$.

\section{Taylor polynomial and remainder}\label{sec:taylor}

\begin{dfn}
For $a\in\frM^m$, $V\in C^\infty(\bM^{1+m})$ we define $\partial^a V\in C^\infty(\bM^{1+m})$ by $\partial^a V(x;x_1,\ldots,x_m):=
 \partial^{a_1}_{x_1} \ldots\partial^{a_m}_{x_m}
 V(x;x_1,\ldots,x_m)$.
The above map extends in an obvious way to $\sS'(\bM^{1+m})\supset\cD^m$. 
\end{dfn}

\begin{dfn}\label{dfn:map_L_m}
For $v\in\cD$ we define $\fL^m v\in\cD^m$ by the equality
\begin{equation}
 \langle\,\fL^m v\,,\,\psi\otimes\varphi_1\otimes\ldots\otimes\varphi_m\,\rangle= \langle v\,,\,\psi\varphi_1\ldots\varphi_m\rangle,
\end{equation}
where $\psi,\varphi_1,\ldots,\varphi_m\in \sS(\bM)$ are arbitrary.
\end{dfn}

\begin{dfn}\label{dfn:map_Z}
For $\tau>0$ and $V\in C(\bM^{1+m})$ we define $\fZ_\tau V\in C(\bM^{1+m})$ by the equality
\begin{equation}
 \fZ_\tau V(x;y_1,\ldots,y_m)
 :=\tau^{-\rdim m}\,V(x;x+(y_1-x)/\tau,\ldots,x+(y_m-x)/\tau).
\end{equation}
The above map extends in an obvious way to $\sS'(\bM^{1+m})\supset\cD^m$. 
\end{dfn}

\begin{dfn}\label{dfn:map_I}
The linear map $\fI\,:\,\cV^m\to\cV$ is defined by
\begin{equation}
 \fI V(x):= \int_{\bM^m} V(x;y_1,\ldots,y_m)\,\rd y_1\ldots\rd y_m.
\end{equation}
The map $\fI$ is extended to $V\in\cD^{m;\oo}$ by the formula
\begin{equation}
 \langle \fI V,\psi\rangle := \langle (\delta_\bM\otimes K^{\ast\oo,\otimes m})\ast V,\psi\otimes 1^{\otimes m}_\bM\rangle,
\end{equation}
where $\psi\in \sS(\bM)$.
\end{dfn}
\begin{lem}\label{lem:map_I}
The map $\fI$ is well defined and has the following properties.
\begin{enumerate}
\item[(A)] $\fI\big( K_\sIR^{\ast\oo,\otimes(1+m)} \ast V\big)=K^{\ast\oo}_\sIR\ast\fI V$
for all $V\in\cV^m$, $\oo\in\bN_0$ and $\sIR>0$.

\item[(B)] $\|\fI V\|_\cV \leq \|V\|_{\cV^m}$ for $V\in\cV^m$.
\end{enumerate}
\end{lem}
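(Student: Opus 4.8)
The plan is to verify the two claimed identities first on the dense subspace $\sV^m$ (where $\fI$ is literally an integral) and then argue that the extension to $\cD^m_\oo$ via the displayed formula is well defined and consistent with these identities. For part~(B), if $V\in\sV^m$ then by Fubini and the definition of $\|\cdot\|_{\sV^m}$ we immediately get
\begin{equation}
 \|\fI V\|_\sV=\sup_{x\in\bM}\Big|\int_{\bM^m} V(x;y_1,\ldots,y_m)\,\rd y_1\ldots\rd y_m\Big|\leq \sup_{x\in\bM}\int_{\bM^m}|V(x;y_1,\ldots,y_m)|\,\rd y_1\ldots\rd y_m=\|V\|_{\sV^m},
\end{equation}
and the periodicity of $x\mapsto\fI V(x)$ follows from that of $V$, so $\fI V\in\sV$. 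For part~(A), again for $V\in\sV^m$, one writes out $K_\sIR^{\ast\oo,\otimes(1+m)}\ast V$ explicitly: the convolution in the variable $x$ against $K_\sIR^{\ast\oo}$ commutes with integrating out $y_1,\ldots,y_m$, while each convolution in a variable $y_q$ against $K_\sIR^{\ast\oo}$, after integrating $y_q$ over $\bM$, contributes the total mass $\|K_\sIR^{\ast\oo}\|_\cK=1$ (using that $K_\sIR^{\ast\oo}$ is a positive measure of total mass $1$). Hence $\fI\big(K_\sIR^{\ast\oo,\otimes(1+m)}\ast V\big)=K_\sIR^{\ast\oo}\ast\fI V$, which is~(A).

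It remains to check that the extension of $\fI$ to $\cD^m_\oo$ is well defined, i.e.\ that $\langle\fI V,\psi\rangle:=\langle K^{\ast\oo,\otimes(1+m)}\ast V,\fP^\oo\psi\otimes 1_\bM^{\otimes m}\rangle$ does not depend on the choice of $\oo$ with $K^{\ast\oo,\otimes(1+m)}\ast V\in\sV^m$, and that it restricts to the integral formula on $\sV^m$. For consistency under $\oo\leadsto\oo+1$: replacing $\oo$ by $\oo+1$ replaces $K^{\ast\oo,\otimes(1+m)}\ast V$ by $K^{\ast(\oo+1),\otimes(1+m)}\ast V=K^{\ast\oo,\otimes(1+m)}\ast\big(K^{\otimes(1+m)}\ast V\big)$ and $\fP^\oo$ by $\fP^{\oo+1}=\fP\,\fP^\oo$; using $\fP K=\delta_\bM$ in the $x$-variable and $\|K\|_\cK=1$ with $K$ a positive measure in each $y$-variable (so that $\langle\,\cdot\,,1_\bM\rangle$ is unchanged under convolution by $K$ in a $y$-slot), the two pairings agree. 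The restriction property on $\sV^m$ is the special case $\oo=0$, since $\fP^0=1$ and $K^{\ast0,\otimes(1+m)}=\delta_{\bM^{1+m}}$. Finally, identity~(A) extends from $\sV^m$ to $\cD^m_\oo$ by applying it with the smoothed object $K^{\ast\oo,\otimes(1+m)}\ast V\in\sV^m$ in place of $V$ and unwinding the definitions, and~(B) extends by density together with the already-established bound on $\sV^m$ (or, directly: $\|\fI V\|_{\sV}$ is computed through $K^{\ast\oo}_\sIR\ast\fI V$ in the norm of $\sC$-type, controlled by $\|K^{\ast\oo,\otimes(1+m)}\ast V\|_{\sV^m}$).

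There is no serious obstacle here; the only point requiring a little care is the well-definedness of the $\oo$-independent extension, which hinges on the two structural facts that $K$ is the inverse of the differential operator $\fP$ (so that the $\fP^\oo$ on the test-function side cancels the $K^{\ast\oo}$ in the $x$-variable) and that $K$ is a probability measure (so that convolution by $K$ in the $y$-variables is invisible after pairing against $1_\bM$). Both facts were recorded in the remarks following the definition of $K_\sIR$, so the argument is essentially bookkeeping.
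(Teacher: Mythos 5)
Your proof is correct and follows essentially the same reasoning as the paper's (much terser) proof: Part (B) is immediate from the definition of the $\sV^m$ norm, while Part (A) and the well-definedness of the extension both rest on the two structural facts that $K_\sIR^{\ast\oo}$ is a probability measure (so integrating out a $y$-slot after convolution by $K$ leaves the result unchanged) and that $\fP K = \delta_\bM$. You simply spell out the bookkeeping that the paper compresses into two sentences.
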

\begin{proof}
The well-definedness and Part (A) follow from $\int_\bM K_\sIR(x)\,\rd x=1$. Part~(B) is a consequence of Def.~\ref{dfn:sV} of the norm~$\cV^m$.
\end{proof}

\begin{dfn}\label{dfn:map_X}
For $\ro\in\bN_+$, $a\in\frM^m$ such that $|a|<\ro$ and two collections of distributions: $v^b\in\cD$, $b\in\frM^m$, $|b|<\ro$, and $V^b\in\cD^m$, $b\in\frM^m$, $|b|=\ro$, the distribution $\fX_\ro^a(v^b,V^b)\in \cD^m$ is defined by the equality 
\begin{equation}
 \fX_\ro^a(v^b,V^b)
 :=
 \sum_{|a+b|<\ro} \frac{1}{b!}
 \partial^b \fL^m(v^{a+b})
 +
 \sum_{|a+b|=\ro} \frac{|b|}{b!}
 \int_0^1 (1-\tau)^{|b|-1}\,
 \partial^b \fZ_\tau(V^{a+b})\,\rd\tau,
\end{equation} 
where the sums above are over $b\in\frM^m$.
\end{dfn}

\begin{thm}\label{thm:taylor_bounds}
Let $\ro\in\bN_+$ and $\oo\in\bN_0$. There exists a constant $c>0$ such that the following statement is true. Let $V^b\in\cV^m$, $v^b\in\cV$ be as in Def.~\ref{dfn:map_X} and $\sIR\in(0,1]$. Assume that there exists a constant $C>0$ such that for $b\in\frM$
\begin{equation}\label{eq:taylor_ass_V}
 \|K_\sIR^{\ast\oo,\otimes(1+m)}\ast V^b\|_{\cV^m} \leq C\, [\sIR]^{|b|},\quad |b|=\ro,
\quad\quad
 \|K_\sIR^{\ast\oo}\ast v^b\|_\cV \leq C\, [\sIR]^{|b|}.
 \quad |b|<\ro,
\end{equation}
Then for $a\in\frM$ such that $|a|<\ro$, it holds that
\begin{equation}\label{eq:taylor_thm_bound}
 \| K_\sIR^{\ast(2\oo+\ro),\otimes(1+m)} \ast \fX_\ro^a(v^b,V^b)\|_{\cV^m} \leq c\, C\, [\sIR]^{|a|}.
\end{equation}
\end{thm}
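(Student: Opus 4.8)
The plan is to estimate the two sums in the definition of $\fX_\ro^a(v^b,V^b)$ separately, using the structural facts about the regularizing kernels established in Sections~\ref{sec:kernels} and~\ref{sec:topology}. The key point to exploit is that every operation appearing in the definition of $\fX_\ro^a$ — the embedding $\fL^m$, the rescaling $\fZ_\tau$, the derivatives $\partial^b$ — interacts in a controlled way with convolution against powers of $K_\sIR$, precisely because $K_\sIR$ is an inverse of the differential operator $\fP_\sIR$. The extra $6\oo+\ro$ copies of $K_\sIR$ on the left of~\eqref{eq:taylor_thm_bound} (as opposed to $\oo$ in the hypothesis~\eqref{eq:taylor_ass_V}) are there exactly to absorb: $\ro$ copies to smooth out the $|b|\le\ro$ derivatives $\partial^b$, and the remaining extra copies to compensate for the factor of up to $6$ that appears when one commutes $K_\sIR^{\ast\oo}$ past the rescaling map $\fZ_\tau$ (as in Remark~\ref{rem:fB_Ks}, where $\fP^{2\oo}$ rather than $\fP^\oo$ showed up after one convolution-type manipulation — here the iterated rescaling can cost a bounded multiple).

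First I would handle the first sum $\sum_{|a+b|<\ro}(-1)^{|b|}\binom{a+b}{a}\partial^b\fL^m(v^{a+b})$. For this term one needs a bound on $\|K_\sIR^{\ast(6\oo+\ro),\otimes(1+m)}\ast\partial^b\fL^m v\|_{\sV^m}$ in terms of $\|K_\sIR^{\ast\oo}\ast v\|_\sV$. The idea is: $\fL^m$ tensors a single-variable distribution into $m+1$ variables by multiplication, and when one smooths with the tensor-power kernel $K_\sIR^{\ast\oo,\otimes(1+m)}$ one can rearrange the convolutions so that effectively one copy of $K_\sIR^{\ast\oo}$ acts on $v$ itself (giving the hypothesized bound $C[\sIR]^{[a+b]}$) while the remaining copies, together with the $\ro$ extra ones, are used to make the derivatives $\partial^b$ with $|b|\le|a+b|<\ro$ harmless via Lemma~\ref{lem:kernel_simple_fact}(A), which gives $\|\partial^b K_\sIR^{\ast\oo'}\|_\cK\lesssim[\sIR]^{-|b|}$ whenever $|b|\le\oo'$. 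The powers of $[\sIR]$ then combine: $[\sIR]^{[a+b]}\cdot[\sIR]^{-[b]}=[\sIR]^{[a]}=[\sIR]^{|a|}$ (here $[b]$ should be read as $|b|$ in the exponent-of-$[\sIR]$ sense — matching the paper's convention in~\eqref{eq:taylor_ass_V}), giving the desired $[\sIR]^{|a|}$. The combinatorial prefactors $\binom{a+b}{a}$ and the number of terms are bounded by a constant depending only on $\ro$ and $m$, which is absorbed into $c$.

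The second sum $\sum_{|a+b|=\ro}|b|(-1)^{|b|}\binom{a+b}{a}\int_0^1(1-\tau)^{|b|-1}\partial^b\fZ_\tau(V^{a+b})\,\rd\tau$ is the main obstacle, and handling it is where the rescaling map $\fZ_\tau$ must be controlled uniformly in $\tau\in(0,1]$. The difficulty is that $\fZ_\tau$ contracts the $y$-variables towards $x$ by a factor $\tau$, so at small $\tau$ it concentrates mass; one must check that $\|K_\sIR^{\ast(6\oo+\ro),\otimes(1+m)}\ast\partial^b\fZ_\tau V\|_{\sV^m}$ stays bounded by $c\,C\,[\sIR]^{|a|}$ for $|a+b|=\ro$. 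The strategy is to commute the smoothing past $\fZ_\tau$: one shows $K_\sIR^{\ast\oo,\otimes(1+m)}\ast\fZ_\tau V=\fZ_\tau\big(K_{\sIR/\tau,\ldots}^{\ast\oo,\ldots}\ast V\big)$ up to a change of scale in the kernel, and since by Lemma~\ref{lem:kernel_u_v} one can re-factor $K_{\sIR'}$ in terms of $K_\sIR$ with $\cK$-norm cost at most $1\vee(2[\tau^{\pm2}]-1)$, the uniformity in $\tau$ follows after checking that the relevant ratios are bounded when $\sIR\le1$ and $\tau\le1$ — this is exactly the place where a bounded multiplicative loss of order up to $6$ enters, explaining the $6\oo$ in the statement. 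Once $\fZ_\tau$ is past the kernel, the hypothesis $\|K_\sIR^{\ast\oo,\otimes(1+m)}\ast V^{a+b}\|_{\sV^m}\le C[\sIR]^{[a+b]}=C[\sIR]^\ro$ applies; the $\ro$ extra kernel copies then kill $\partial^b$ with $|b|\le\ro$ via Lemma~\ref{lem:kernel_simple_fact}(A) at a cost $[\sIR]^{-|b|}$, and the $\tau$-integral of $(1-\tau)^{|b|-1}$ is finite, leaving $[\sIR]^{\ro-|b|}=[\sIR]^{|a|}$. I would pay particular attention to the fact that $\fZ_\tau$ preserves the $\sV^m$-norm up to the Jacobian $\tau^{-\rdim m}$ which is precisely cancelled by the $\tau^{-\rdim m}$ built into Definition~\ref{dfn:map_Z}, so no spurious $\tau$-powers survive; verifying this cancellation cleanly, together with the kernel-rescaling bookkeeping, is the technical heart of the argument.
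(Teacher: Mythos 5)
Your overall plan is the right one and matches the paper's strategy at the top level: split $\fX^a_\ro$ into the $\fL^m$-sum and the $\fZ_\tau$-sum, peel off $K_\sIR^{\ast\ro,\otimes(1+m)}$ to absorb the derivatives $\partial^b$ via $\|\partial^b K^{\ast\ro}_\sIR\|_\cK\lesssim[\sIR]^{-|b|}$ (Lemma~\ref{lem:kernel_simple_fact}(A)), and reduce the remaining kernel factors to the hypothesis by controlling how convolution with $K_\sIR^{\ast\oo,\otimes(1+m)}$ interacts with $\fL^m$ and $\fZ_\tau$. This is exactly the content the paper packages in Lemma~\ref{lem:taylor_bounds_aux}. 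Your treatment of the $\fL^m$-sum is essentially Lemma~\ref{lem:taylor_bounds_aux}(A), and it is sound; the power bookkeeping $[\sIR]^{|a+b|}\cdot[\sIR]^{-|b|}=[\sIR]^{|a|}$ and the absorption of the binomial prefactors into $c$ are fine.

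The gap is in the $\fZ_\tau$-sum, and specifically in the claim that $K_\sIR^{\ast\oo,\otimes(1+m)}\ast\fZ_\tau V$ equals $\fZ_\tau$ applied to $V$ smoothed with a rescaled $K$ (plus a cost controlled by Lemma~\ref{lem:kernel_u_v}). This commutation is not correct. Writing out $(K_\sIR^{\ast\oo,\otimes(1+m)}\ast\fZ_\tau V)(x;y)$ and substituting $z_j=x'+(y_j'-x')/\tau$, the $j$-th kernel factor becomes $K_\sIR^{\ast\oo}\bigl(y_j-x'-\tau(z_j-x')\bigr)$, which couples the $x'$-integration variable to $z_j$; this is not a rescaled convolution kernel acting on $z_j$ alone, and no choice of $\sIR'$ makes it one. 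The correct identity is $K^{\ast2\oooo,\otimes(1+m)}_\sIR\ast\fZ_\tau V=\check H^{\ast\oooo}_\tau\ast\fZ_\tau\bigl(K^{\ast\oooo,\otimes(1+m)}_\sIR\ast V\bigr)$, where $\check H_\tau=(K\otimes\check K_\tau^{\otimes m})\ast H_\tau$ with $\check K_\tau=\tau^2\delta_\bM+(1-\tau^2)K$ and $H_\tau$ as in Lemma~\ref{lem:taylor_aux}. These auxiliary kernels are $\tau$-dependent mixtures of $\delta_\bM$ and $K$ built so that $\|\check H_\tau\|_{\cK^{1+m}}$ stays bounded uniformly in $\tau\in(0,1]$ — a convexity-type fact, not a rescaling fact; this is what Lemma~\ref{lem:taylor_bounds_aux}(B) establishes and what your approach would need to reproduce. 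A second, smaller issue: your attribution of the exponent $6\oo$ to a ``multiplicative loss of order up to~$6$'' from kernel rescaling is not what is happening; once Lemma~\ref{lem:taylor_bounds_aux} is in hand, $2\oo+\ro$ copies already suffice (one factor of~$2$ from Part~(B), $\ro$ from the derivatives), and the surplus factors have $\cK$-norm one.
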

\begin{proof}
The theorem follows from Def.~\ref{dfn:map_X} of the map $\fX^a_\ro$, Lemma~\ref{lem:taylor_bounds_aux} and the bound $\|\partial^c K^{\ast\ro}_\sIR\|_\cK\lesssim |\sIR|^{-|c|}$, $c\in\frM$, $|c|\leq\ro$, proved in Lemma~\ref{lem:kernel_simple_fact}~(A). 
\end{proof}

\begin{thm}\label{thm:taylor}
Let $\ro\in\bN_+$ and $V\in\cD^m$ such that $\cX^{m,b} V \in \cD^m$ for all $b\in\frM^m$, $|b|\leq\ro$. Then
$
 \cX^{m,a} V = \fX_\ro^a(\fI(\cX^{m,b} V),\cX^{m,b} V)
$
for all $a\in\frM^m$, $|a|<\ro$.
\end{thm}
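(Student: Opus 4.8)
The plan is to recognize the identity $\cX^{m,a} V = \fX_\ro^a(\fI(\cX^{m,b} V),\cX^{m,b} V)$ as a distributional incarnation of Taylor's theorem with integral remainder, applied coordinate-by-coordinate in the $m$ ``internal'' variables. First I would reduce to a pointwise statement. For $V\in\cD^m$ with $\cX^{m,b}V\in\cD^m$ for all $|b|\le\ro$, the relevant quantities are smooth after convolving with $K^{\ast\oo,\otimes(1+m)}$; so it suffices to prove the identity after applying $K^{\ast\oo,\otimes(1+m)}\ast$ to both sides, where everything becomes an honest continuous function on $\bM^{1+m}$, and then invoke density/uniqueness to remove the mollification. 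On the left we get a continuous function $W(x;y_1,\ldots,y_m)$; the claim becomes a pointwise polynomial-in-$(y-x)$ identity.

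The core computation is the one-dimensional (in each $y_q$) Taylor expansion. Writing $W_a(x;y):=\tfrac1{a!}\prod_q(x-y_q)^{a_q}\,W(x;y)$, Taylor's formula with integral remainder, expanded simultaneously in all $m$ variables $y_1,\ldots,y_m$ around the points $y_q=x$, expresses $W_a$ as a sum of: (i) lower-order monomials $\tfrac1{b!}(x-y)^b$ times the ``moment'' obtained by integrating $\partial^b_y$ of the full weight against $1^{\otimes m}$ in the remaining slots — this is exactly $\partial^b\fL^m(v^{a+b})$ with $v^{a+b}=\fI(\cX^{m,a+b}V)$, the binomial coefficient $\binom{a+b}{a}$ coming from reorganizing $(x-y)^a$ against the Taylor monomial $(x-y)^b$; plus (ii) an integral remainder term which, after the change of variables $y_q\mapsto x+(y_q-x)/\tau$ that rescales toward the diagonal, is precisely $\partial^b\fZ_\tau$ applied to $\cX^{m,a+b}V$ with $|a+b|=\ro$, weighted by $|b|(1-\tau)^{|b|-1}$ and $\binom{a+b}{a}(-1)^{|b|}$. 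Matching these two pieces against Definition \ref{dfn:map_X} of $\fX^a_\ro$ gives the identity. The signs $(-1)^{|b|}$ arise because $\partial^b$ is taken in the $y$-variables while the Taylor monomials are in $(x-y)$; the multinomial bookkeeping over which derivatives land where is routine once one notes $\fL^m$ collapses the derivative-hit weight onto the diagonal and $\fI$ performs the integration.

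The compatibility of $\fI$ and $\fZ_\tau$ with the mollification $K^{\ast\oo,\otimes(1+m)}\ast$ is what makes the distributional statement legitimate: by Lemma \ref{lem:map_I}(A) we have $\fI(K_\sIR^{\ast\oo,\otimes(1+m)}\ast V)=K_\sIR^{\ast\oo}\ast\fI V$, and $\fZ_\tau$ intertwines with convolution by a rescaled kernel in an analogous way, so one can first prove the identity for the mollified data (genuine $C^\infty_\rb$ functions, where classical Taylor applies verbatim) and then pass to the limit using that $\cX^{m,b}V\in\cD^m$ guarantees all terms lie in a fixed $\cD^m_{\oo'}$. The main obstacle I expect is purely combinatorial: organizing the simultaneous multivariate Taylor expansion so that the regrouping of monomials $(x-y)^a(x-y)^b = \binom{a+b}{a}^{-1}\cdot a!b!/(a+b)!\cdot(x-y)^{a+b}$ together with the $m$-fold product structure reproduces exactly the two sums in Definition \ref{dfn:map_X} with the stated coefficients $(-1)^{|b|}\binom{a+b}{a}$ and remainder weight $|b|(1-\tau)^{|b|-1}$ — there is no analytic difficulty beyond this, since $K^{\ast\oo}$ being the inverse of a differential operator keeps all mollified objects smooth and bounded.
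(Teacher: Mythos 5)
Your intuition that this is a distributional Taylor theorem is exactly right, and the combinatorics you sketch (binomial coefficients from regrouping $(x-y)^a(y-x)^b$, signs from the $y$-derivatives) has the correct flavor. But the ``reduce to a pointwise statement'' step cannot be carried out as you describe, and the paper's proof takes a cleaner, dual route that avoids the obstruction entirely.

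Two concrete issues. First, multiplication by $\cX^{m,a}$ does not commute with convolution, so $K^{\ast\oo,\otimes(1+m)}\ast(\cX^{m,a}V)$ is not $\cX^{m,a}\big(K^{\ast\oo,\otimes(1+m)}\ast V\big)$. You set $W$ equal to the former and then compute with $W_a:=\cX^{m,a}W$ as if it were the mollified left-hand side; this conflation hides the actual content. Second, and more fundamentally, the term $\partial^b\fL^m(v^{a+b})$ appearing in Definition~\ref{dfn:map_X} is a genuinely singular distribution: by Definition~\ref{dfn:map_L_m}, $\fL^m v$ is a Dirac mass concentrated on the diagonal $\{y_1=\cdots=y_m=x\}$ weighted by $v$, so $\partial^b\fL^m v$ carries derivatives of delta functions. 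It is not ``a lower-order monomial times a moment,'' as your item (i) asserts. Consequently $\cX^{m,a}V=\fX^a_\ro(v^b,V^b)$ is not a pointwise identity of continuous functions even for smooth $V$; the singular pieces on the right only reconcile with the smooth left-hand side distributionally. Moreover, Taylor-expanding a smooth $W(x;y)$ in the $y$-variables around the diagonal yields the derivative data $(\partial^b_y W)(x;x,\ldots,x)$, which is \emph{not} the moment $v^{a+b}=\fI(\cX^{m,a+b}V)=\int\cX^{m,a+b}(x;y)V(x;y)\,\rd y$ that $\fX^a_\ro$ actually consumes, so the proposed computation does not connect to the target object.

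The paper's proof dualizes: it Taylor-expands the test function $\varphi(x;y)$ around the diagonal of $\bM^{1+m}$, multiplies by $\cX^{m,a}$ to assemble $(-1)^{|b|}\tbinom{a+b}{a}\cX^{m,a+b}(\partial^b\varphi)(x;x,\ldots,x)$, and then evaluates $\langle\cX^{m,a}V,\varphi\rangle=\langle V,\cX^{m,a}\varphi\rangle$ term by term. The moment $v^{a+b}$ appears precisely because $(\partial^b\varphi)(x;x,\ldots,x)$ depends on $x$ alone, so after pairing with $\cX^{m,a+b}V$ the test function acts like $\psi\otimes 1_\bM^{\otimes m}$, which is how $\fI$ is defined; the remainder contributes $\fZ_\tau$ through its formal adjoint $\fZ_\tau^\dagger$. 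No mollification, no density argument, no passage to the limit, and no need to assign pointwise meaning to the diagonal-supported terms.
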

\begin{proof}
First recall that for all $\varphi\in C^\infty(\bR^N)$, $N\in\bN_+$, it holds that
\begin{multline}
 \varphi(y) = \sum_{|b|<\ro} \frac{1}{b!}(y-x)^b\,(\partial^b \varphi)(x) 
 \\
 + \sum_{|b|=\ro} \frac{|b|}{b!}\,(y-x)^b \int_0^1 (1-\tau)^{|b|-1}\,(\partial^b \varphi)(x+\tau(y-x))\,\rd\tau
\end{multline} 
by the Taylor theorem, where the sums are over $b\in\bN_0^N$. Consequently, for all $\varphi\in \sS(\bM^{1+m})$ we have
\begin{multline}
 (\cX^{m,a}\varphi)(x;y_1,\ldots,y_m) 
 = \sum_{|a+b|<\ro} \frac{(-1)^{|b|}}{b!}\cX^{m,a+b}(x;y_1,\ldots,y_m) \,(\partial^b \varphi)(x;x,\ldots,x) 
 \\
 + \sum_{|a+b|=\ro} \frac{(-1)^{|b|}\,|b|}{b!}\cX^{m,a+b}(x;y_1,\ldots,y_m) \int_0^1 (1-\tau)^{|b|-1}\,(\fZ_{\tau}^\dagger\partial^b\varphi)(x;y_1,\ldots,y_m)\,\rd\tau,
\end{multline}
where the map $\fZ_{\tau}^\dagger:=\tau^{\rdim m}\fZ_{1/\tau}$ is a formal dual to the map $\fZ_\tau$ and the sums are over $b\in\frM$. To complete the proof we test $\cX^{m,a} V$ with $\varphi\in \sS(\bM^{1+m})$, apply the above formula and use the definitions of the maps $\fX_\ro^a$ and $\fI$.
\end{proof}

\begin{lem}\label{lem:taylor_bounds_aux}
Let $\oooo\in\bN_0$. The following bounds hold uniformly for $\tau,\sIR\in(0,1]$ and $v\in\cV$, $V\in\cV^m$:
\begin{enumerate}
\item[(A)] $\|K^{\ast\oooo,\otimes(1+m)}_\sIR\ast \fL^m v\|_{\cV^m} \lesssim  \|K^{\ast\oooo}_\sIR\ast v\|_\cV$,
\item[(B)]
$\|K^{\ast2\oooo,\otimes(1+m)}_\sIR\ast\fZ_\tau V\|_{\cV^m}
 \lesssim
 \|K^{\ast\oooo,\otimes(1+m)}_\sIR\ast V\|_{\cV^m}$.
\end{enumerate}
\end{lem}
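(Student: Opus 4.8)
The plan is to prove both bounds by reducing them to elementary facts about the regularizing kernels from Section~\ref{sec:kernels}, in particular that each $K^{\ast\oooo}_\sIR$ is a probability measure (total mass one) and that convolution against a probability measure is a contraction in the relevant $L^1$-type norms. For part~(A), recall from Def.~\ref{dfn:map_L_m} that $\fL^m v(x;y_1,\ldots,y_m)$ is (morally) $v(x)$ times a product of Dirac deltas $\delta_\bM(x-y_1)\cdots\delta_\bM(x-y_m)$, so $\fL^m v$ lives on the diagonal. First I would write
$K^{\ast\oooo,\otimes(1+m)}_\sIR\ast\fL^m v$ explicitly: convolving the $j$-th factor $K^{\ast\oooo}_\sIR$ (in the variable $y_j$, $j\geq1$) against $\delta_\bM(x'-y_j)$ just replaces the delta by $K^{\ast\oooo}_\sIR(x'-y_j)$, while the convolution in the $x$-variable together with the diagonal constraint reproduces, after integrating out $y_1,\ldots,y_m$, exactly $K^{\ast\oooo}_\sIR\ast v$ evaluated at $x$. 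More precisely, for fixed $x$ one has
\begin{equation}
 \int_{\bM^m}\big|(K^{\ast\oooo,\otimes(1+m)}_\sIR\ast\fL^m v)(x;y_1,\ldots,y_m)\big|\,\rd y_1\cdots\rd y_m
 \leq \big(\|K^{\ast\oooo}_\sIR\|_\cK\big)^m\,\big|(K^{\ast\oooo}_\sIR\ast v)(x)\big|,
\end{equation}
where the $m$ extra factors come from integrating each $K^{\ast\oooo}_\sIR(x'-y_j)$ in $y_j$; since $\|K^{\ast\oooo}_\sIR\|_\cK=1$ this is precisely $\lesssim\|K^{\ast\oooo}_\sIR\ast v\|_\sV$ after taking $\sup_x$.

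For part~(B) I would use the explicit rescaling in Def.~\ref{dfn:map_Z}: $\fZ_\tau V(x;y_1,\ldots,y_m)=\tau^{-\rdim m}V\big(x;x+(y_1-x)/\tau,\ldots,x+(y_m-x)/\tau\big)$, which is an $L^1$-isometry in the variables $y_1,\ldots,y_m$ for each fixed $x$ (the Jacobian $\tau^{-\rdim m}$ cancels under the change of variables $z_j=(y_j-x)/\tau$). The point is to commute the convolution by $K^{\ast2\oooo}_\sIR$ past $\fZ_\tau$. Here I would split $K^{\ast2\oooo}_\sIR=K^{\ast\oooo}_\sIR\ast K^{\ast\oooo}_\sIR$ and handle one copy of $K^{\ast\oooo}_\sIR$ before the rescaling and one after: schematically, convolving $\fZ_\tau V$ in the $x$-variable and the $y_j$-variables against $K^{\ast\oooo}_\sIR$-factors, then changing variables $z_j=(y_j-x)/\tau$, turns the $y_j$-convolutions into convolutions against $\tau^{-\rdim}K^{\ast\oooo}_\sIR(\Cdot/\tau)$, which is again a probability measure; one similarly absorbs the $x$-convolution. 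After bounding absolute values and integrating, each convolution against a probability measure only costs a factor $1$, and one is left with $\sup_x\int_{\bM^m}|(K^{\ast\oooo,\otimes(1+m)}_\sIR\ast V)(x;\Cdot)|=\|K^{\ast\oooo,\otimes(1+m)}_\sIR\ast V\|_{\sV^m}$ up to the implicit constant coming from the finite number of rearrangements.

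The main obstacle is bookkeeping rather than anything deep: in part~(B) the convolution in the $x$-variable does not commute cleanly with $\fZ_\tau$ because $\fZ_\tau$ couples $x$ to the $y_j$'s, so one must be careful about which kernel factor is applied before and which after the change of variables, and verify that after the rescaling the transported kernel $\tau^{-\rdim m}K^{\ast\oooo,\otimes m}_\sIR(\Cdot/\tau)$ is still a probability measure so that Young's inequality (in the form $\|\mu\ast g\|_{L^1}\leq\|\mu\|_{\mathrm{TV}}\|g\|_{L^1}$) applies with constant $1$. This is exactly the reason the statement requires $K^{\ast2\oooo}_\sIR$ on the left of~(B) but only $K^{\ast\oooo}_\sIR$ on the right — one power is "spent" on each side of the rescaling — and the uniformity in $\tau,\sIR\in(0,1]$ is then automatic because total masses are independent of $\tau$ and $\sIR$.
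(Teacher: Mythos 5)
Your argument does not actually prove the stated bounds, and the gap is the same in both parts: you are only controlling quantities like $\|v\|_{\sV}$ and $\|V\|_{\sV^m}$, whereas the lemma needs control in terms of the \emph{regularized} norms $\|K^{\ast\oooo}_\sIR\ast v\|_{\sV}$ and $\|K^{\ast\oooo,\otimes(1+m)}_\sIR\ast V\|_{\sV^m}$, which can be much smaller. Concretely, in part (A) you write
\begin{equation}
 (K^{\ast\oooo,\otimes(1+m)}_\sIR\ast\fL^m v)(x;y_1,\ldots,y_m)
 =\int_\bM K^{\ast\oooo}_\sIR(x-z)\,K^{\ast\oooo}_\sIR(y_1-z)\cdots K^{\ast\oooo}_\sIR(y_m-z)\,v(z)\,\rd z,
\end{equation}
and then claim that after taking absolute values and integrating in $y_1,\ldots,y_m$ one gets $\lesssim |(K^{\ast\oooo}_\sIR\ast v)(x)|$. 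That step is false: moving the absolute value inside the $z$-integral produces $(K^{\ast\oooo}_\sIR\ast|v|)(x)$, not $|(K^{\ast\oooo}_\sIR\ast v)(x)|$, and the inequality between the two goes the wrong way. If $v$ oscillates on a scale comparable to $[\sIR]$, the smoothed quantity $K^{\ast\oooo}_\sIR\ast v$ is nearly $0$ while $K^{\ast\oooo}_\sIR\ast|v|$ is of order $\|v\|_\sV$; your argument would then give the weaker (and, for the intended application in Theorem~\ref{thm:taylor_bounds}, useless) bound $\|K^{\ast\oooo,\otimes(1+m)}_\sIR\ast\fL^m v\|_{\sV^m}\lesssim\|v\|_{\sV}$. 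The same issue kills part (B): the naive splitting of $K^{\ast2\oooo}_\sIR$ and the change of variables $z_j=(y_j-x)/\tau$ yield, after taking absolute values, only $\|K^{\ast2\oooo,\otimes(1+m)}_\sIR\ast\fZ_\tau V\|_{\sV^m}\lesssim\|V\|_{\sV^m}$. The coupling between $x$ and the $y_j$'s in $\fZ_\tau$ cannot be ``absorbed'' by being careful with bookkeeping; the convolution in $x$ genuinely does not commute with $\fZ_\tau$, and the absolute value bars destroy the cancellations that would be needed to see $K^{\ast\oooo,\otimes(1+m)}_\sIR\ast V$ emerge intact.

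The missing idea is exactly the one the paper flags after introducing $K_\sIR$: one must exploit that $K^{\ast\oooo}_\sIR$ is the inverse of the differential operator $\fP^\oooo_\sIR$. For (A), the paper writes $v=\fP^\oooo_\sIR(\partial_z)(K^{\ast\oooo}_\sIR\ast v)$ and integrates by parts \emph{before} taking absolute values, so that the smooth quantity $K^{\ast\oooo}_\sIR\ast v$ appears verbatim and all the derivatives hit the product of $K^{\ast\oooo}_\sIR$'s, producing a new kernel $H_0^{\ast\oooo}\in\cK^{1+m}$ of bounded total variation. The resulting identity $K^{\ast\oooo,\otimes(1+m)}\ast\fL^m v=H_0^{\ast\oooo}\ast\fL^m(K^{\ast\oooo}\ast v)$ is what makes the bound hold with $\|K^{\ast\oooo}_\sIR\ast v\|_\sV$ on the right. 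For (B), the same mechanism yields the two identities
$K^{\otimes(1+m)}=(K\otimes\check K^{\otimes m}_\tau)\ast\fZ_\tau(\delta_\bM\otimes K^{\otimes m})$
and
$K^{\otimes(1+m)}=H_\tau\ast\fZ_\tau(K\otimes\delta_\bM^{\otimes m})$,
which ``commute'' the convolutions past $\fZ_\tau$ at the cost of a correction kernel $\check H_\tau$ whose $\cK^{1+m}$-norm is bounded uniformly in $\tau\in(0,1]$; only then do you get the target quantity $\fZ_\tau(K^{\ast\oooo,\otimes(1+m)}_\sIR\ast V)$ on the right, after which your change-of-variables / total-mass argument correctly finishes the estimate. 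So the issue is not bookkeeping but a missing structural input; without the differential-operator trick, you cannot pass from $\|v\|_\sV$ to $\|K^{\ast\oooo}_\sIR\ast v\|_\sV$. (A minor additional remark: the paper first reduces to $\sIR=1$ by exact scaling, which you do not do; that omission alone is harmless, but it is worth noting since all the intermediate identities are cleanest at $\sIR=1$.)
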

\begin{proof}
By the exact scaling of the norms and kernels it is enough to prove the lemma for $\sIR=1$ and all $v\in C_\rb(\bM)$, $V\in C(\bM^{1+m})$ such that $\|V\|_{\cV^m}<\infty$. Using $\fP^{\oooo} K^{\ast\oooo}=\delta_\bM$ we obtain
\begin{multline}
 (K^{\ast\oooo,\otimes(1+m)}\ast\fL^m v)(x;y_1,\ldots,y_m)
 \\
 =\int_\bM
 (K^{\ast\oooo}\ast v)(z)~
 \fP^{\oooo}(\partial_z) K^{\ast\oooo}(x-z)K^{\ast\oooo}(y_1-z)\ldots K^{\ast\oooo}(y_m-z)
 \,\rd z
 \\
 =\int_\bM
 (K^{\ast\oooo}\ast v)(z)~
 H^{\ast\oooo}_0(x-z,y_1-z,\ldots,y_m-z)
 \,\rd z,
\end{multline}
where $H_0\in\cK^{1+m}$ is defined in the lemma below and $\fP^{\oooo}(r)=(1-r^2)^\oooo$. This proves the bound (A) since
$
 K^{\ast\oooo,\otimes(1+m)}\ast \fL^m v = H_0^{\ast\oooo}\ast \fL^m(K^{\ast\oooo}\ast v).
$
The bound~(B) follows from the identities
\begin{equation}\label{eq:lem_taylor_useful_identities}
\begin{gathered}
 K^{\otimes(1+m)}
 =(K\otimes\check K^{\otimes m}_\tau) \ast \fZ_\tau(\delta_\bM\otimes K^{\otimes m}),
 \\
 K^{\otimes (1+m)}
 = H_\tau\ast\fZ_\tau(K\otimes\delta_\bM^{\otimes m}),
\end{gathered}
\end{equation}
where the kernels $\check K_\tau$ and $H_\tau$ are defined in the lemma below. Indeed, applying the above equalities recursively and using $\fZ_\tau H\ast \fZ_\tau V=\fZ_\tau(H\ast V)$ we arrive at
\begin{equation}\label{eq:taylor_S_hat_property}
 K^{\ast2\oooo,\otimes(1+m)}\ast\fZ_\tau V
 =
 \check H^{\ast\oooo}_\tau\ast \fZ_\tau(K^{\ast\oooo,\otimes(1+m)}\ast V),
 \quad  
 \check H_\tau:=(K\otimes \check K^{\otimes m}_\tau) \ast H_\tau,
\end{equation}
which implies Part (B) since $\|\check H_\tau\|_{\cK^{1+m}}\lesssim 1$ uniformly in $\tau\in(0,1]$ by the lemma below. It remains to establish the identities~\eqref{eq:lem_taylor_useful_identities}. The first one follows from the fact that $\fZ_\tau(\delta_\bM\otimes K^{\otimes m}) = \delta_\bM\otimes\breve K_\tau^{\otimes m}$, where $\breve K_\tau(x):=\tau^{-\rdim}K(x/\tau)$. To show the second one we observe that by the definition of $\check H_\tau$ given in the lemma below
\begin{equation}
 \fZ_{1/\tau}(H_\tau)(x;y_1,\ldots,y_m)
 =
 \tau^{\rdim m}\, \fP(\partial_x)\, K(x) 
 K(x+\tau(y_1-x)) \ldots K(x+\tau(y_1-x)).
\end{equation}
After convolving both sides with the kernel $K\otimes\delta_\bM^{\otimes m}$ we obtain
\begin{equation}
 \fZ_{1/\tau}(H_\tau)\ast\big(K\otimes\delta_\bM^{\otimes m}\big)
 =
 \fZ_{1/\tau}(K^{\otimes (1+m)})
\end{equation}
The second identity in \eqref{eq:lem_taylor_useful_identities} follows after applying the map $\fZ_\tau$ to both sides of the above equality.
\end{proof}

\begin{lem}\label{lem:taylor_aux}
The distributions $\check K_\tau\in\sS'(\bM)$ and $H_\tau\in\sS'(\bM^{1+m})$ defined by
\begin{equation}
\begin{gathered}
\check K_\tau(x):= \fP(\tau \partial_x) K(x) = \tau^2 \delta_\bM(x) + (1-\tau^2) K(x),
 \\ 
 H_\tau(x;y_1,\ldots,y_m)
 :=
 \fP(\partial_x+(1-\tau)\partial_y) \, K(x) K(y_1) \ldots K(y_m), 
\end{gathered}
\end{equation}
where $\partial_y:=\partial_{y_1}+\ldots+\partial_{y_m}$ and $\fP(r)=1-r^2$, are polynomials in $\tau>0$ of degree~$2$ whose coefficients belong to $\cK$ and $\cK^{1+m}$, respectively.
\end{lem}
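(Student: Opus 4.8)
The plan is to compute $\check K_\tau$ and $H_\tau$ directly from their definitions as polynomials in the single variable $\tau$ with operator-valued coefficients, then observe that each coefficient is a finite product of operators of the form $1$, $K$ (the kernel $\fP^{-1}$, which lies in $\cK$), and differential operators applied to $K$'s, all of which land in $\cK$ or $\cK^{1+m}$ by Lemma~\ref{lem:kernel_simple_fact}~(A). The key point is that $\fP(r)=1-r^2$ has degree $2$, so substituting $r\mapsto\tau\partial_x$ or $r\mapsto\partial_x+(1-\tau)\partial_y$ produces an expression of degree at most $2$ in $\tau$.

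For $\check K_\tau$: by definition $\check K_\tau = \fP(\tau\partial_x) K = (1-\tau^2\Delta_x) K = K - \tau^2 \Delta_x K$. Since $\fP K = \delta_\bM$, i.e. $K - \Delta_x K = \delta_\bM$, we get $\Delta_x K = K - \delta_\bM$, hence $\check K_\tau = K - \tau^2(K-\delta_\bM) = \tau^2\delta_\bM + (1-\tau^2)K$, which is the claimed formula. This is manifestly a degree-$2$ polynomial in $\tau$ whose coefficients $\delta_\bM$ and $K$ both belong to $\cK$ (with $\|\delta_\bM\|_\cK = \|K\|_\cK = 1$).

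For $H_\tau$: write $D := \partial_x + (1-\tau)\partial_y$ where $\partial_y = \partial_{y_1}+\ldots+\partial_{y_m}$, so $H_\tau = \fP(D)\,\big(K(x)K(y_1)\cdots K(y_m)\big) = \big(1 - D^2\big)\big(K^{\otimes(1+m)}\big)$. Expanding $D^2 = \partial_x^2 + 2(1-\tau)\partial_x\partial_y + (1-\tau)^2\partial_y^2$ shows $H_\tau$ is a polynomial in $\tau$ of degree $2$ (the $(1-\tau)^2$ term). Its coefficients are finite sums of terms of the shape $\partial^\alpha K \otimes \partial^{\beta_1} K \otimes \cdots \otimes \partial^{\beta_m} K$ with $|\alpha| + |\beta_1| + \cdots + |\beta_m| \le 2$; each such factor with $|\cdot|\le 1$ lies in $\cK$ by Lemma~\ref{lem:kernel_simple_fact}~(A) (with $\oo=1$, using $\fP K = \delta_\bM$ so $K\in\cK$ and $\Delta K = K-\delta_\bM\in\cK$, $\partial_i K\in\cK$), and a tensor product of elements of $\cK$ lies in $\cK^{1+m}$ with norm the product of the norms. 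Hence every coefficient of $H_\tau$ belongs to $\cK^{1+m}$.

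There is no real obstacle here; the statement is essentially a bookkeeping consequence of $\fP$ being quadratic together with the already-recorded fact that derivatives of $K$ up to the relevant order are finite measures. The only mild care needed is to confirm that $\partial^a K\in\cK$ for $|a|\le 2$: for $|a|=0$ this is $K\in L^1(\bM)\subset\cK$; for $|a|=2$ one uses $\Delta K = K - \delta_\bM$ componentwise together with the product structure, and the first-order derivatives $\partial_i K$ are handled as in Lemma~\ref{lem:kernel_simple_fact}~(A) with $\oo=1$. Once these membership facts are in place, reading off the polynomial degree in $\tau$ from the explicit expressions above completes the proof.
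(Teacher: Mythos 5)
The paper states this lemma without proof, so there is no paper proof to compare against; I will simply check your argument. Your computation of $\check K_\tau$ via $\Delta K=K-\delta_\bM$ is correct, and the expansion of $\fP(\partial_x+(1-\tau)\partial_y)$ indeed produces a polynomial in $\tau$ of degree $2$. The overall approach — reduce to membership of the explicit coefficients in $\cK$, $\cK^{1+m}$ — is the right one.

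However, the closing paragraph overclaims: the blanket assertion that one should ``confirm that $\partial^a K\in\cK$ for $|a|\le 2$'' is false for generic multi-indices with $|a|=2$. The Bessel-type kernel $K=(1-\Delta)^{-1}\delta_\bM$ has mixed second derivatives $\partial_i\partial_j K$ ($i\ne j$) whose singularity at the origin scales like $|x|^{-\rdim}$, which is not integrable, so $\partial_i\partial_j K\notin L^1(\bM)$; only the contracted combination $\Delta K=K-\delta_\bM$ lies in $\cK$. Fortunately the expansion
\begin{equation}
|\partial_x+(1-\tau)\partial_y|^2
=\Delta_x+2(1-\tau)\textstyle\sum_j\partial_x\cdot\partial_{y_j}
+(1-\tau)^2\Bigl(\textstyle\sum_j\Delta_{y_j}+\textstyle\sum_{j\ne k}\partial_{y_j}\cdot\partial_{y_k}\Bigr)
\end{equation}
applied to the tensor product $K(x)K(y_1)\cdots K(y_m)$ never lands two derivatives with distinct component indices on the same factor: every second-order operator acting on a single tensor slot is a Laplacian, while all cross terms distribute first derivatives across two different slots. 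So the only membership facts actually needed are $K\in\cK$, $\partial_i K\in\cK$ (Lemma~\ref{lem:kernel_simple_fact}~(A) with $\oo=1$), and $\Delta K=K-\delta_\bM\in\cK$ — exactly those you invoke — together with the multiplicativity of $\|\cdot\|_{\cK^{1+m}}$ under tensor product. You should replace the misleading sentence about $|a|\le2$ by this observation; the rest of the argument stands.
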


\section{Deterministic bounds for the relevant coefficients}\label{sec:deterministic_relavant}

\begin{lem}
For every $\oo\in\bN_0$, $a\in\frM$ and $r\in\bN_0$ the bound
\begin{equation}
 \|\fP^{\oo}_\sIR \cX^a\partial_\uv^r \partial_\sIR G_{\uv,\sIR}\|_\cK \lesssim [\uv]^{(\varepsilon-\sigma)r} [\sIR]^{|a|-\varepsilon r}
\end{equation}  
holds uniformly in $\uv,\sIR\in(0,1/2]$.
\end{lem}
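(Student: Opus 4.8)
The plan is to reduce the statement to the already-proven Lemma~\ref{lem:kernel_G} by absorbing the polynomial factor $\cX^a$ into the existing estimate. First I would recall that $\cX^a(x-y)=\frac{1}{a!}(x-y)^a$, so that for a kernel $H\in\cK$ we have $\cX^a H$ interpreted as the function $x\mapsto \frac{1}{a!}x^a H(x)$; the point is that multiplication by a monomial of degree $|a|$ preserves $\cK$ with a controllable norm provided $H$ has enough decay. Since Lemma~\ref{lem:kernel_G} already shows $\partial_\uv^r\partial_\sIR G_{\uv,\sIR}\in C^\infty_\rc(\bM)$, the support of $\partial_\uv^r\partial_\sIR G_{\uv,\sIR}$ is contained in a ball of radius $\lesssim[\sIR]$ (this is visible from the cutoff structure $\chi_\uv\chi_\sIR$ and the fact that $\partial_\sIR\chi_\sIR(|x|^\sigma)$ is supported where $\sIR<(1-\sIR)|x|^\sigma\le 2\sIR$, i.e. $|x|^\sigma\lesssim\sIR$, hence $|x|\lesssim[\sIR]$). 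On that support $|x^a|\lesssim[\sIR]^{|a|}$, so multiplication by $\cX^a$ costs at most a factor $[\sIR]^{|a|}$ in the $\cK$-norm.

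The main technical wrinkle is the presence of the differential operator $\fP^\oo_\sIR=(1-[\sIR]^2\Delta_x)^\oo$, because $\fP^\oo_\sIR$ does not commute with multiplication by $\cX^a$. The fix is the Leibniz rule: $\fP^\oo_\sIR(\cX^a H)$ is a finite sum of terms of the form $(\text{const})\,[\sIR]^{2|\gamma|}\,(\partial^\gamma \cX^a)(\partial^\delta H)$ with $|\gamma|+|\delta|\le 2\oo$ and $|\gamma|\le|a|$ (since $\cX^a$ is a polynomial of degree $|a|$). Now $\partial^\gamma\cX^a=\cX^{a-\gamma}$ up to a combinatorial constant, which is again a monomial, of degree $|a|-|\gamma|$, and on the support of $\partial^\delta H$ (still contained in the same ball of radius $\lesssim[\sIR]$, since differentiation does not enlarge support) it is bounded by $[\sIR]^{|a|-|\gamma|}$. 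Therefore each term is bounded in $\cK$ by $[\sIR]^{2|\gamma|}[\sIR]^{|a|-|\gamma|}\|\partial^\delta \partial_\uv^r\partial_\sIR G_{\uv,\sIR}\|_\cK=[\sIR]^{|a|+|\gamma|}\|\partial^\delta\partial_\uv^r\partial_\sIR G_{\uv,\sIR}\|_\cK$. For the factor $\|\partial^\delta\partial_\uv^r\partial_\sIR G_{\uv,\sIR}\|_\cK$ I would invoke the sharper estimate buried in the proof of Lemma~\ref{lem:kernel_G}, namely $\|\partial^\delta\partial_\uv^r\partial_\sIR G_{\uv,\sIR}\|_\cK\lesssim[\uv]^{(\varepsilon-\sigma)r}[\sIR]^{-\varepsilon r-|\delta|}$; combined with $[\sIR]^{|\gamma|}$ and $|\gamma|\ge 2|\gamma|-2\oo\ge\ldots$ one has to check that the net power of $[\sIR]$ is at least $|a|-\varepsilon r$. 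Concretely, the exponent is $|a|+|\gamma|-\varepsilon r-|\delta|$, and since for $\sIR\in(0,1/2]$ the quantity $[\sIR]$ is bounded above, a \emph{lower} bound on the exponent suffices; using $|\delta|\le 2\oo-|\gamma|$ would give $|a|+|\gamma|-\varepsilon r-|\delta|\ge |a|+2|\gamma|-2\oo-\varepsilon r$, which is not obviously $\ge|a|-\varepsilon r$.

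The clean way around this is to apply the compact-support argument \emph{after} differentiating, exactly as in Lemma~\ref{lem:kernel_G}: bound $\partial^\delta(\cX^a\partial_\uv^r\partial_\sIR G_{\uv,\sIR})$ pointwise using the pointwise bounds on $\partial^a G$, $\partial^a\partial_\uv^r\chi_\uv(|x|^\sigma)$, $\partial^a\partial_\sIR\chi_\sIR(|x|^\sigma)$ recorded there, together with the trivial bound $|\partial^\gamma x^a|\lesssim|x|^{|a|-|\gamma|}$, and then integrate over the ball $\{|x|\lesssim[\sIR]\}$ (for $\sIR\le 1/2$; for $\sIR\in(1/2,1]$ one handles it separately, noting $[\sIR]\asymp 1$ there). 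This yields $\|\partial^a\partial_\uv^r\partial_\sIR G_{\uv,\sIR}\|_\cK$ gains a genuine factor $[\sIR]^{|a|}$ relative to the $a=0$ case, i.e. $\|\partial^\delta(\cX^a\partial_\uv^r\partial_\sIR G_{\uv,\sIR})\|_\cK\lesssim[\uv]^{(\varepsilon-\sigma)r}[\sIR]^{|a|-\varepsilon r-|\delta|}$, and then expanding $\fP^\oo_\sIR$ and collecting the $[\sIR]^{2|\gamma|}$ factors against the $-|\delta|$ losses (with $|\gamma|+|\delta|\le 2\oo$, but now crucially $|\delta|\le 2\oo$ and $|\gamma|\ge 0$, and the worst term is $|\gamma|=0$, $|\delta|=2\oo$, giving exponent $|a|-\varepsilon r-2\oo+0$, which after using $|\delta|\le 2|\gamma|$ is not available — so one instead notes that every Laplacian in $\fP_\sIR$ carries its own $[\sIR]^2$, hence the term with $\partial^\delta$ of total order $2p$ carries exactly $[\sIR]^{2p}$ and contributes $[\sIR]^{|a|-\varepsilon r-2p+2p}=[\sIR]^{|a|-\varepsilon r}$) gives the claim. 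In other words, the matching of the $[\sIR]^{2p}$ from $\fP^\oo_\sIR$ against the $[\sIR]^{-2p}$ from the $2p$ derivatives is precisely the mechanism already used in Lemma~\ref{lem:kernel_G}, and the only new input is the factor $[\sIR]^{|a|}$ from the compactly supported monomial, which survives untouched. I expect the main obstacle to be purely bookkeeping: correctly organizing the Leibniz expansion of $\fP^\oo_\sIR(\cX^a\,\Cdot)$ so that the powers of $[\sIR]$ visibly cancel in pairs, and treating the regime $\sIR\in(1/2,1]$ as a trivial separate case; there is no genuine analytic difficulty beyond what Lemma~\ref{lem:kernel_G} already contains.
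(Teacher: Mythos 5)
Your overall approach is correct and is essentially the method the paper has in mind (the paper gives no explicit proof, only the remark that the result ``can be easily proved using the method of the proof of Lemma~\ref{lem:kernel_G}''). Two points deserve attention.

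First, your Leibniz bookkeeping is garbled midstream. When you expand $\fP^\oo_\sIR(\cX^a H)$ with $H=\partial^r_\uv\partial_\sIR G_{\uv,\sIR}$ via $(1-[\sIR]^2\Delta)^\oo=\sum_{p\le\oo}\binom{\oo}{p}(-1)^p[\sIR]^{2p}\Delta^p$, a generic term after Leibniz has the form $[\sIR]^{2p}(\partial^\gamma\cX^a)(\partial^\delta H)$ with $|\gamma|+|\delta|=2p$, not $[\sIR]^{2|\gamma|}$ as you wrote at one point; that typo is exactly what produced your apparent mismatch $|a|+2|\gamma|-2\oo-\varepsilon r$. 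With the correct prefactor the exponent is
\begin{equation}
 2p + (|a|-|\gamma|) + (-\varepsilon r - |\delta|) = |a|-\varepsilon r,
\end{equation}
identically, for every term, and there is nothing further to optimize. (The middle piece $|a|-|\gamma|$ uses $|\partial^\gamma\cX^a|\lesssim[\sIR]^{|a|-|\gamma|}$ on $\supp\partial_\sIR\chi_\sIR(|\Cdot|^\sigma)$ for $\sIR\le1/2$, and the last piece is the $a=0$ estimate from Lemma~\ref{lem:kernel_G}.) So your final conclusion is right but the reasoning you actually wrote down to reach it is not self-consistent; you correct course only by a hand-wave at the end.

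Second, your parenthetical ``for $\sIR\in(1/2,1]$ one handles it separately, noting $[\sIR]\asymp 1$ there'' is both unnecessary and false. The lemma claims the bound only for $\sIR\in(0,1/2]$, and the remark immediately following it in the paper explains why: for $\sigma\notin 2\bN_+$ the bound fails to be uniform on $(0,1]$ precisely because as $\sIR\to1$ the support of $\partial_\sIR\chi_\sIR(|\Cdot|^\sigma)$ (which lives where $\sIR<(1-\sIR)|x|^\sigma\leq 2\sIR$) escapes to infinity, the monomial $\cX^a$ blows up there, and the only polynomial decay $|\partial^b G(x)|\lesssim|x|^{-\sigma-\rdim-|b|}$ available at infinity is not enough to absorb it. Your claim that ``the support is contained in a ball of radius $\lesssim[\sIR]$'' is therefore wrong for $\sIR>1/2$; it is correct (and is all you need) only for $\sIR\le1/2$, where $1-\sIR\ge1/2$ forces $|x|\asymp[\sIR]$.
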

\begin{rem}
The above lemma can be easily proved using the method of the proof of Lemma~\ref{lem:kernel_G}. For $\sigma\notin2\bN_+$ and $|a|\geq\sigma$ the bound stated in the lemma does not hold uniformly for all $\sIR\in(0,1]$ because of the slow decay of $G(x)$ at infinity. For this reason the range $\sIR\in(1/2,1]$ is treated separately in Sec.~\ref{sec:long_range}.
\end{rem}

\begin{thm}\label{thm:deterministic_taylor}
Fix $R>1$. Assume that for all $i,m\in\bN_0$, $a\in\frM^m$ such that $\varrho(i,m)+|a|\leq0$ there exists $\oo\in\bN_0$ such that
\begin{equation}
 \|K^{\ast\oo}_\sIR\ast \partial_\uv^r f^{i,m,a}_{\uv,\sIR}\|_\cV \leq R\,[\uv]^{(\varepsilon-\sigma) r}[\sIR]^{\varrho_{3\varepsilon}(i,m)+|a|}
\end{equation}
for all \mbox{$r\in\{0,1\}$} and $\uv,\sIR\in(0,1/2]$. Then for all $i,m\in\bN_0$, $a\in\frM^m$ there exists $\oo\in\bN_0$ such that
\begin{equation}
 \|K_{\sIR}^{\ast\oo,\otimes(1+m)}\ast \partial_\uv^r\partial_\sIR^s F_{\uv,\sIR}^{i,m,a}\|_{\cV^m}\lesssim R^{1+3i -m}\,[\uv]^{(\varepsilon-\sigma)r}\, [\sIR]^{\varrho_{3\varepsilon}(i,m)+|a|-\sigma s}
\end{equation}
for all $r,s\in\{0,1\}$ uniformly in $\uv,\sIR\in(0,1/2]$. The constants of proportionality in the above bounds depend only on $i,m\in\bN_+$, $a\in\frM^m$ and are otherwise universal. 
\end{thm}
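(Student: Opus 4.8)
The plan is a single induction over the pairs $(i,m)$ with the ordering used in the proof of Theorem~\ref{thm:deterministic_convergence}: to treat $(i_\circ,m_\circ)$ one assumes the full conclusion — the stated bound for all $r,s\in\{0,1\}$ and all $a\in\frM^m$ — for every $(i,m)$ with $i<i_\circ$, or with $i=i_\circ$ and $m>m_\circ$. The pairs with $m>3i$ are vacuous since then $F^{i,m,a}_{\uv,\sIR}=0$, and $(0,0)$ is the base case: $F^{0,0,0}_{\uv,\sIR}=\xi$ has $\partial_\sIR\xi=\partial_\uv\xi=0$, and the remaining bound $\|K^{\ast\oo}_\sIR\ast\xi\|_\sV\le R\,[\sIR]^{\varrho_\varepsilon(0,0)}$ is exactly the assumption for the relevant coefficient $f^{0,0,0}_{\uv,\sIR}$. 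At a pair $(i,m)$ with $m\le 3i$ I would first settle $s=1$ for all $a$: insert the flow equation~\eqref{eq:flow_deterministic_i_m_a}, convolve with $K^{\ast\oo,\otimes(1+m)}_\sIR$, push the kernels inside $\fB$ via Remark~\ref{rem:fB_Ks}, and estimate using Remark~\ref{rem:fY_pi}, Lemma~\ref{lem:fB1_bound}, the bound $\|\fP^{2\oo}_\sIR\cX^c\partial_\uv^u\partial_\sIR G_{\uv,\sIR}\|_\cK\lesssim[\uv]^{(\varepsilon-\sigma)u}[\sIR]^{|c|-\varepsilon u}$ from the preceding lemma, and the inductive bounds for the factors $F^{j,1+k,b}_{\uv,\sIR}$ and $F^{i-j,m-k,d}_{\uv,\sIR}$; in every non-vanishing summand these two factors sit at pairs strictly earlier in the ordering (either $j<i$ and $i-j<i$, or $j=i$ and $k=m$, in which case the first factor is $F^{i,1+m,b}_{\uv,\sIR}$ with $1+m>m$ and the second is $F^{0,0,0}_{\uv,\sIR}=\xi$). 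Collecting powers, the UV exponent is $(\varepsilon-\sigma)(u+v+w)=(\varepsilon-\sigma)r$, the power of $R$ is $(1+3j-(1+k))+(1+3(i-j)-(m-k))=1+3i-m$, and the flow-parameter exponent equals $|b|+|c|+|d|+\varrho_\varepsilon(j,1+k)+\varrho_\varepsilon(i-j,m-k)-\varepsilon u=\varrho_\varepsilon(i,m)+|a|-\sigma+\varepsilon(1-u)\ge\varrho_\varepsilon(i,m)+|a|-\sigma$ by $|b|+|c|+|d|=|a|$ and the identity $\varrho_\varepsilon(j,1+k)+\varrho_\varepsilon(i-j,m-k)=\varrho_\varepsilon(i,m)-\sigma+\varepsilon$; the number of summands depends only on $(i,m,a)$.

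Next I would settle $s=0$. If $(i,m,a)$ is irrelevant then $F^{i,m,a}_{\uv,0}=F^{i,m,a}_\uv=0$, since the only non-zero force coefficients $F^{i,m,a}_\uv$, namely $F^{0,0,0}_\uv=\xi$, $F^{1,3,0}_\uv$ and $F^{i,1,0}_\uv$ with $i\le i_\sharp$, are all relevant; hence $F^{i,m,a}_{\uv,\sIR}=\int_0^\sIR\partial_\uIR F^{i,m,a}_{\uv,\uIR}\,\rd\uIR$, and combining the factorization $K^{\ast\oo,\otimes(1+m)}_\sIR=K^{\ast\oo,\otimes(1+m)}_{\sIR,\uIR}\ast K^{\ast\oo,\otimes(1+m)}_\uIR$ with $\|K_{\sIR,\uIR}\|_\cK=1$ for $\uIR\le\sIR$ (Lemma~\ref{lem:kernel_u_v}), the $s=1$ bound, and $\int_0^\sIR[\uIR]^{\varrho_\varepsilon(i,m)+|a|-\sigma}\,\rd\uIR=\tfrac{\sigma}{\varrho_\varepsilon(i,m)+|a|}[\sIR]^{\varrho_\varepsilon(i,m)+|a|}$ — finite because $\varrho_\varepsilon(i,m)+|a|>0$ by Remark~\ref{rem:rho} — gives the claim. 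If $(i,m,a)$ is relevant I would instead take the smallest $\ro\in\bN_+$ with $\varrho(i,m)+\ro>0$, so that $|a|<\ro$, every $f^{i,m,b}_{\uv,\sIR}$ with $|b|<\ro$ is relevant (hence controlled by the assumption) and every $F^{i,m,b}_{\uv,\sIR}$ with $|b|=\ro$ is irrelevant (hence controlled by the previous case). Since $\fI F^{i,m,b}_{\uv,\sIR}=f^{i,m,b}_{\uv,\sIR}$ and the polynomial multiples $\cX^{m,b}F^{i,m}_{\uv,\sIR}$, $|b|\le\ro$, lie in $\cD^m$ by construction, Theorem~\ref{thm:taylor} (cf. Remark~\ref{rem:taylor}) gives $F^{i,m,a}_{\uv,\sIR}=\fX_\ro^a\big((f^{i,m,b}_{\uv,\sIR})_{|b|<\ro},(F^{i,m,b}_{\uv,\sIR})_{|b|=\ro}\big)$, and $\partial_\uv$ commutes with the $\uv$-independent linear map $\fX_\ro^a$. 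For each fixed $\sIR$ the inputs obey the hypotheses of Theorem~\ref{thm:taylor_bounds} with the common constant $C=c_0\,R^{1+3i-m}[\uv]^{(\varepsilon-\sigma)r}[\sIR]^{\varrho_\varepsilon(i,m)}$ (factoring $[\sIR]^{\varrho_\varepsilon(i,m)+|b|}=[\sIR]^{\varrho_\varepsilon(i,m)}[\sIR]^{|b|}$), so that theorem yields $\|K^{\ast(6\oo+\ro),\otimes(1+m)}_\sIR\ast\partial_\uv^r F^{i,m,a}_{\uv,\sIR}\|_{\sV^m}\le c\,C\,[\sIR]^{|a|}\lesssim R^{1+3i-m}[\uv]^{(\varepsilon-\sigma)r}[\sIR]^{\varrho_\varepsilon(i,m)+|a|}$, that is, the desired bound with $\oo$ replaced by $6\oo+\ro$. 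Together with the $s=1$ bound this closes the induction; at each stage the implied constants depend only on $(i,m,a)$ and $\oo$ may be enlarged freely.

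The main obstacle is the $s=0$ bound for the relevant coefficients: the flow equation cannot be integrated from $\sIR=0$ because $\int_0^\sIR[\uIR]^{\varrho_\varepsilon(i,m)+|a|-\sigma}\,\rd\uIR$ diverges when $\varrho_\varepsilon(i,m)+|a|\le0$, so $F^{i,m,a}_{\uv,\sIR}$ must be reconstructed from the lower-order data — the enhanced-noise coefficients $f^{i,m,b}_{\uv,\sIR}$ with $|b|<\ro$, supplied by the assumption, and the irrelevant coefficients $F^{i,m,b}_{\uv,\sIR}$ with $|b|=\ro$, supplied by the previous step — through the Taylor identity, the delicate points being the choice of $\ro$ that makes the relevance split exactly right and the $\sIR$-dependent common constant $C$ in Theorem~\ref{thm:taylor_bounds}. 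A secondary point, already present in Theorem~\ref{thm:deterministic_convergence}, is carrying the power $R^{1+3i-m}$ through the bilinear flow term, which works precisely because $\varrho_\varepsilon(j,1+k)+\varrho_\varepsilon(i-j,m-k)=\varrho_\varepsilon(i,m)-\sigma+\varepsilon$ and the polynomial weights add.
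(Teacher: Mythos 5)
Your proposal is correct and follows essentially the same route as the paper: the same induction on pairs $(i,m)$, the same three-step structure at each stage (first $s=1$ via the flow equation \eqref{eq:flow_deterministic_i_m_a} together with Lemma~\ref{lem:fB1_bound}, Remark~\ref{rem:fB_Ks} and the pointwise bound on $\fP^{2\oo}_\sIR\cX^c\partial^u_\uv\partial_\sIR G_{\uv,\sIR}$; then $s=0$ for irrelevant $(i,m,a)$ by integrating from $\sIR=0$, using that $F^{i,m,a}_\uv=0$ there and $\varrho_\varepsilon(i,m)+|a|>0$ by Remark~\ref{rem:rho}; then $s=0$ for relevant $(i,m,a)$ via the Taylor identity~\eqref{eq:thm_deterministic_relevant} and Theorem~\ref{thm:taylor_bounds}). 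The only cosmetic difference is that the paper separates out $m=0$ (where $F^{i,0,0}_{\uv,\sIR}=f^{i,0,0}_{\uv,\sIR}$ and the assumption applies directly) rather than running it through the degenerate case $\fX^0_\ro=\fL^0\circ\fI=\mathrm{id}$ of the Taylor map, as you do; both are correct, and your bookkeeping of the $R^{1+3i-m}$ power and the choice of the common constant $C$ in Theorem~\ref{thm:taylor_bounds} is clean and accurate.
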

\begin{rem}
Let us remind the reader that the coefficients $F^{i,m,a}_{\uv,\sIR}$ and $f^{i,m,a}_{\uv,\sIR}$ such that $\varrho(i,m)+|a|\leq0$ are called relevant and $F^{i,m,a}_{\uv,\sIR}$ and $f^{i,m,a}_{\uv,\sIR}$ vanish unless $m\leq 3i$. There only finitely many non-zero relevant coefficients $F^{i,m,a}_{\uv,\sIR}$ or $f^{i,m,a}_{\uv,\sIR}$. Recall that the collection of the relevant coefficients $f^{i,m,a}_{\uv,\sIR}$ such that $m\leq 3i$ is called the enhanced noise.
\end{rem}

\begin{rem}
Note that the parameter $\oo\in\bN_0$ in Theorem~\ref{thm:deterministic_taylor} as well as in Theorem~\ref{thm:deterministic_long_range} depends on $i,m\in\bN_+$ and $a\in\frM^m$. We shall use these theorems to bound only the relevant effective force coefficients. Since there are  only finitely many non-zero relevant coefficients one can choose $\oo\in\bN_0$ independent on $i,m\in\bN_+$ and $a\in\frM^m$ such that the bounds stated in the above-mentioned theorems hold true for all relevant coefficients.
\end{rem}

\begin{rem}
Theorem~\ref{thm:deterministic_taylor} together with Theorem~\ref{thm:deterministic_long_range} imply that under the assumption of the former theorem there exists $\oo\in\bN_0$ and a universal constant $c>1$ such that the assumption of Theorem~\ref{thm:deterministic_convergence} is satisfied with $\tilde R=cR$.
\end{rem}

\begin{proof}
{\it The base case:}
We observe that $F^{0,0,0}_{\uv,\sIR}=f^{0,0,0}_{\uv,\sIR}=\xi$. Hence, for $i=0$, $m=0$ the statement follows from the assumption.

{\it The inductive step:}  Fix some $i_\circ\in\bN_+$, $m_\circ\in\bN_0$ and assume that the statement with $s=0$ is true for all $i,m\in\bN_+$ such that either $i<i_\circ$ or $i=i_\circ$ and $m>m_\circ$. We shall prove the statement for $i=i_\circ$, $m=m_\circ$. As in the proof of Theorem~\ref{thm:deterministic_convergence} the induction hypothesis, the flow equation~\eqref{eq:flow_deterministic_i_m_a}, and
\begin{equation}
 \varrho_{3\varepsilon}(i,m)+|a|-\sigma \leq \varrho_{3\varepsilon}(j,1+k)+|b| + |c| + \varrho_{3\varepsilon}(i-j,m-k)+|d| -\varepsilon,
\end{equation}
\begin{equation}
 R^{1+3j-k-1} R^{1+3(i-j)-m+k} = R^{1+3i-m}
\end{equation}
imply the statement for $s=1$. 

In order to prove the statement for $s=0$ let us first assume that $a\in\frM^m$ is such that \mbox{$\varrho(i,m)+|a|>0$}. Then $F^{i,m,a}_{\uv,0}=F^{i,m,a}_\uv=0$. Consequently, the analogs of Eq.~\eqref{eq:determinisitc_intagrate} and the bound~\eqref{eq:determinisitc_bound_u_v} imply that
\begin{equation}
 \|K_{\sIR}^{\ast\oo,\otimes(1+m)}\ast\partial_\uv^r F_{\uv,\sIR}^{i,m,a}\|_{\cV^m}
 \\
 \leq
 \int_{0}^{\sIR}
 \|K_{\uIR}^{\ast\oo,\otimes(1+m)}\ast
 \partial_\uv^r\partial_\uIR F_{\uv,\uIR}^{i,m,a}\|_{\cV^m}\,\rd\uIR.
\end{equation}
The statement follows now from the bound for $\partial_\uv^r\partial_\uIR F_{\uv,\uIR}^{i,m,a}$ and the fact that \mbox{$\varrho_\varepsilon(i,m)+|a|-\varepsilon>0$} if \mbox{$\varrho(i,m)+|a|>0$} by Remark~\ref{rem:rho}.

Now suppose that $a\in\frM^m$ is such that \mbox{$\varrho(i,m)+|a|\leq0$}. Then our bound for $\|K_{\uIR}^{\ast\oo,\otimes(1+m)}\ast \partial_\uv^r\partial_\uIR F_{\uv,\uIR}^{i,m,a}\|_{\cV^m}$ is not integrable at $\uIR=0$. Consequently, the strategy used in the previous paragraph does not work. If $m=0$, then $a=0$ and $\partial_\uv^r F^{i,0,0}_{\uv,\sIR}=\partial_\uv^r f^{i,0,0}_{\uv,\sIR}$. As a result, the statement follows from the assumption. If $m\in\bN_+$, then we use the identity
\begin{equation}\label{eq:thm_deterministic_relevant}
 \partial_\uv^r F^{i,m,a}_{\uv,\sIR}
 =
 \fX^a_\ro(\partial_\uv^r f^{i,m,b}_{\uv,\sIR},
 \partial_\uv^r F^{i,m,b}_{\uv,\sIR}),
\end{equation}
which follows from Theorem~\ref{thm:taylor}. We choose $\ro\in\bN_+$ to be the smallest positive integer such that $\varrho(i,m)+\ro>0$. With this choice of $\ro$ the RHS of the above equality involves only the coefficients $f^{i,m,b}_{\uv,\sIR}$ such that $\varrho(i,m,b)\leq0$, which satisfy the assumed bound, and the coefficients $F^{i,m,b}_{\uv,\sIR}$ such that $\varrho(i,m,b)>0$, for which the statement of the theorem has already been established. To finish the proof of the inductive step we apply Theorem~\ref{thm:taylor_bounds} with $C\lesssim [\uv]^{(\varepsilon-\sigma)r}[\sIR]^{\varrho_{3\varepsilon}(i,m)}$.
\end{proof}

\section{Deterministic bounds for the long-range part of the coefficients}\label{sec:long_range}

\begin{thm}\label{thm:deterministic_long_range}
Fix $R>1$. Assume that for all $i,m\in\bN_0$ there exists $\oo\in\bN_0$ such that
\begin{equation}
 \|K_{\sIR}^{\ast\oo,\otimes(1+m)}\ast \partial_\uv^r\partial_\sIR^s F_{\uv,\sIR}^{i,m}\|_{\cV^m}\lesssim R^{1+3i-m}\,[\uv]^{(\varepsilon-\sigma)r}\, [\sIR]^{\varrho_{3\varepsilon}(i,m)-\sigma s}
\end{equation}
for $r,s\in\{0,1\}$ uniformly in $\uv,\sIR\in(0,1/2]$. Then for all $i,m\in\bN_0$ there exists $\oo\in\bN_0$ such that  the above bound is true for $r,s\in\{0,1\}$ uniformly in $\uv\in(0,1/2]$, $\sIR\in(0,1]$. The constants of proportionality in the above bounds depend only on $i,m\in\bN_+$ and are otherwise universal. 
\end{thm}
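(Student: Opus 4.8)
The plan is to reprise the bootstrap argument of Theorem~\ref{thm:deterministic_convergence}, the only change being that the flow parameter is now integrated up from $\sIR=1/2$ rather than from $\sIR=0$. Since the assumed bounds already cover $\sIR\in(0,1/2]$ for both $r,s\in\{0,1\}$, it suffices to establish them on $\sIR\in(1/2,1]$. I would induct on $(i,m)$ in the order in which the coefficients are constructed (for fixed $i$, decreasing $m$; then increasing $i$), handling $(i_\circ,m_\circ)$ under the hypothesis that the conclusion holds for all earlier pairs. The base case $(0,0)$ is immediate: $F^{0,0}_{\uv,\sIR}=\xi$ is independent of $\uv$ and $\sIR$, so only $r=s=0$ is nontrivial, and for $\sIR\in(1/2,1]$ one writes $K_\sIR^{\ast\oo}=K_{\sIR,1/2}^{\ast\oo}\ast K_{1/2}^{\ast\oo}$ with $\|K_{\sIR,1/2}\|_\cK=1$ (Lemma~\ref{lem:kernel_u_v}, since $\sIR\geq1/2$) to reduce to the assumed bound at $\sIR=1/2$; the pairs $(0,m)$ with $m>0$ are trivial since $F^{0,m}_{\uv,\sIR}=0$ (Remark~\ref{rem:explicit_coefficients}). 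Throughout I would use that enlarging $\oo$ only weakens the estimate (as $K_\sIR^{\ast\oo'}$ has unit mass), so I am free to take $\oo$ to be a single value large enough for the finitely many coefficients entering a given computation.

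First I would prove the $s=1$ bound on $\sIR\in(1/2,1]$. Apply the flow equation~\eqref{eq:flow_deterministic_i_m} to $\partial_\sIR\partial_\uv^r F^{i,m}_{\uv,\sIR}$. Because $F^{0,m'}_{\uv,\sIR}=0$ for $m'>0$ (Remark~\ref{rem:explicit_coefficients}), the only surviving summands involve factors $F^{j,1+k}_{\uv,\sIR}$ and $F^{i-j,m-k}_{\uv,\sIR}$ with $(j,1+k)$ and $(i-j,m-k)$ strictly earlier than $(i,m)$ in the construction order: for $1\le j\le i-1$ both have smaller first index, and $j=i$ forces $k=m$, yielding $F^{i,1+m}_{\uv,\sIR}$ and $F^{0,0}_{\uv,\sIR}=\xi$. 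Exactly as in the proof of Theorem~\ref{thm:deterministic_convergence} (Remark~\ref{rem:fY_pi}, Lemma~\ref{lem:fB1_bound}, Remark~\ref{rem:fB_Ks}), $\|K_\sIR^{\ast\oo,\otimes(1+m)}\ast\partial_\sIR\partial_\uv^r F^{i,m}_{\uv,\sIR}\|_{\sV^m}$ is bounded, up to an $(i,m)$-dependent constant, by a finite sum over $u+v+w=r$ and $j,k$ of products $\|\fP_\sIR^{2\oo}\partial_\uv^u\partial_\sIR G_{\uv,\sIR}\|_\cK\,\|K_\sIR^{\ast\oo,\otimes(2+k)}\ast\partial_\uv^v F^{j,1+k}_{\uv,\sIR}\|_{\sV^{1+k}}\,\|K_\sIR^{\ast\oo,\otimes(1+m-k)}\ast\partial_\uv^w F^{i-j,m-k}_{\uv,\sIR}\|_{\sV^{m-k}}$. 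The decisive point is that Lemma~\ref{lem:kernel_G} bounds the $G$-factor by $[\uv]^{(\varepsilon-\sigma)u}[\sIR]^{-\varepsilon}$ \emph{uniformly in $\sIR\in(0,1]$}, so nothing degenerates at $\sIR=1$; the $F$-factors are controlled by the induction hypothesis, now available on the whole range $\sIR\in(0,1]$. The $\varepsilon$- and $R$-bookkeeping is identical to that of Theorem~\ref{thm:deterministic_convergence}: the identity $\varrho_\varepsilon(i,m)-\sigma=\varrho_\varepsilon(j,1+k)+\varrho_\varepsilon(i-j,m-k)-\varepsilon$ reproduces the exponent of $[\sIR]$ and $R^{1+3j-(1+k)}R^{1+3(i-j)-(m-k)}=R^{1+3i-m}$ reproduces the power of $R$, the finite sum being absorbed into the constant. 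Together with the assumed range this gives the $s=1$ estimate for all $\sIR\in(0,1]$.

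Next I would obtain the $s=0$ bound on $\sIR\in(1/2,1]$ by integrating the identity $F^{i,m}_{\uv,\sIR}=F^{i,m}_{\uv,1/2}+\int_{1/2}^\sIR\partial_\uIR F^{i,m}_{\uv,\uIR}\,\rd\uIR$. Convolving with $K_\sIR^{\ast\oo,\otimes(1+m)}$ and taking the $\sV^m$-norm, I would treat the boundary term by factoring $K_\sIR^{\ast\oo}=K_{\sIR,1/2}^{\ast\oo}\ast K_{1/2}^{\ast\oo}$ ($\|K_{\sIR,1/2}\|_\cK=1$ as $\sIR\geq1/2$) and invoking the assumed bound at $\sIR=1/2$, and the integrand by factoring, for each $\uIR\in[1/2,\sIR]$, $K_\sIR^{\ast\oo}=K_{\sIR,\uIR}^{\ast\oo}\ast K_{\uIR}^{\ast\oo}$ ($\|K_{\sIR,\uIR}\|_\cK=1$ as $\uIR\leq\sIR$) — this is the manipulation~\eqref{eq:determinisitc_bound_u_v} already used in Theorem~\ref{thm:deterministic_convergence} — and invoking the $s=1$ bound at $\uIR$ just proved. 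On the compact range $[1/2,1]$ all the powers $[\sIR]^{\varrho_\varepsilon(i,m)}$ and $[\uIR]^{\varrho_\varepsilon(i,m)-\sigma}$ lie between two positive constants, so the boundary term contributes $\lesssim R^{1+3i-m}[\uv]^{(\varepsilon-\sigma)r}[\sIR]^{\varrho_\varepsilon(i,m)}$, the integral (of length $\le1/2$) contributes the same, and adding them closes the induction.

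I do not expect a substantive obstacle; the content lies entirely in seeing why the argument survives at $\sIR=1$. The one thing to be careful about is conceptual: the proof must call only on the \emph{unweighted} coefficients $F^{i,m}_{\uv,\sIR}$ and the \emph{unweighted} kernel derivatives $\partial_\uv^r\partial_\sIR G_{\uv,\sIR}$ of Lemma~\ref{lem:kernel_G}, never on the polynomial-weighted objects $F^{i,m,a}_{\uv,\sIR}$ or $\cX^a\partial_\sIR G_{\uv,\sIR}$ — for the latter the analogue of Lemma~\ref{lem:kernel_G} genuinely fails near $\sIR=1$ because of the slow decay of $G$ at infinity, which is exactly why Theorem~\ref{thm:deterministic_taylor} is confined to $\sIR\in(0,1/2]$. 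Since the flow equation~\eqref{eq:flow_deterministic_i_m} for the unweighted coefficients is self-contained among unweighted quantities, this difficulty never surfaces, and the proof is a routine reprise of the Theorem~\ref{thm:deterministic_convergence} bootstrap with the integration anchored at $\sIR=1/2$.
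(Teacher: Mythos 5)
Your proposal is correct and follows essentially the same route as the paper's own proof: the same induction on $(i,m)$, the same use of the flow equation~\eqref{eq:flow_deterministic_i_m} together with Lemma~\ref{lem:kernel_G} to obtain the $s=1$ bound uniformly up to $\sIR=1$, and the same integration anchored at $\sIR=1/2$ combined with the unit-mass factorization $K_\sIR=K_{\sIR,\uIR}\ast K_\uIR$ (Lemma~\ref{lem:kernel_u_v}) for the $s=0$ bound. The only cosmetic difference is in the base case, where the paper writes $\|K_\sIR^{\ast\oo}\ast\xi\|_\sV\leq\|K_{\sIR/2}^{\ast\oo}\ast\xi\|_\sV$ to land in the assumed range $(0,1/2]$, while you factor through $K_{1/2}^{\ast\oo}$ for $\sIR\in(1/2,1]$ — both are trivially equivalent. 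Your closing remark about staying with the unweighted coefficients and the unweighted kernel bound of Lemma~\ref{lem:kernel_G} (precisely because the polynomial-weighted analogue fails as $\sIR\to1$ owing to the slow decay of $G$) correctly identifies the reason this theorem, unlike Theorem~\ref{thm:deterministic_taylor}, extends to the full range $\sIR\in(0,1]$.
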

\begin{proof}
{\it The base case:}
We observe that $F^{0,0}_{\uv,\sIR}=f^{0,0,0}_{\uv,\sIR}=\xi$ is independent of $\uv,\sIR$. The assumed bound and Lemma~\ref{lem:kernel_u_v} imply that
\begin{equation}
 \|K_\sIR^{\ast\oo}\ast\xi\|_\cV\leq \|K_{\sIR/2}^{\ast\oo}\ast\xi\|_\cV \lesssim [\sIR]^{\varrho_{3\varepsilon}(0,0)}
\end{equation}
uniformly in $\sIR\in(0,1]$. This proves the statement for $i=0$ and $m=0$. 

{\it The induction step:} Fix some $i_\circ\in\bN_+$, $m_\circ\in\bN_0$ and assume that the statement with $s=0$ is true for all $i,m\in\bN_+$ such that either $i<i_\circ$ or $i=i_\circ$ and $m>m_\circ$. We shall prove the statement for $i=i_\circ$, $m=m_\circ$. The induction hypothesis and the flow equation imply the statement for $s=1$. In order to prove the statement for $s=0$ we use the identity
\begin{equation}
 F^{i,m}_{\uv,\sIR} = F^{i,m}_{\uv,1/2} + \int_{1/2}^\sIR \partial_\uIR F^{i,m}_{\uv,\uIR}\,\rd\uIR,
\end{equation}
which by Lemma~\ref{lem:kernel_u_v} implies that
\begin{multline}
 \|K_{\sIR}^{\ast\oo,\otimes(1+m)}\ast \partial_\uv^r F_{\uv,\sIR}^{i,m}\|_{\cV^m}
 \\
 \leq
 \|K_{1/2}^{\ast\oo,\otimes(1+m)}\ast \partial_\uv^r F_{\uv,1/2}^{i,m}\|_{\cV^m}
 +
 \int_{1/2}^{\sIR}
 \|K_{\uIR}^{\ast\oo,\otimes(1+m)}\ast \partial_\uv^r\partial_\uIR F_{\uv,\uIR}^{i,m}\|_{\cV^m}\rd\uIR
\end{multline}
for $\sIR\in[1/2,1]$. This completes the proof of the inductive step.
\end{proof}

\section{Deterministic construction of the solution}\label{sec:solution}

Assuming the bound~\eqref{eq:bound_deterministic_main} for all $r,s\in\{0,1\}$, $i,m\in\bN_0$, $\uv\in(0,1/2]$, $\sIR\in(0,1]$ with fixed $\oo\in\bN_0$, $\tilde R>1$ we show how to construct a solution of Eq.~\eqref{eq:intro_mild}.

\begin{dfn}
The families of sets $\cV_{\sIR}$, $\cB_{\sIR}$, $\sIR\in(0,1]$, are defined as
\begin{equation}
\begin{aligned}
 \cV_{\sIR}&:=\{K^{\ast\oo}_\sIR\ast\varphi\in\cV\,|\,\varphi\in\cV\},
 \\
 \cB_{\sIR}&:=\{K^{\ast\oo}_\sIR\ast\varphi\in\cV\,|\,\varphi\in\cV,\,\|\varphi\|_\cV< \tilde R^2\,[\sIR]^{-\dim(\varPhi)-9\varepsilon}\}.
\end{aligned} 
\end{equation}
By Lemma~\ref{lem:kernel_u_v}, if $\varphi\in\cB_{\sIR}$, then $\varphi\in\cB_{\uIR}$ for all $\uIR$ in a sufficiently small neighbourhood of $\sIR$. For $\uv\in(0,1]$, $\sIR\in[0,1]$ and $|\lambda|\leq \tilde R^{-6}$ we define the functionals
\begin{equation}
 F_{\uv,\sIR}\,:\, \cB_{\sIR}\to \cD,
 \qquad
 \rD F_{\uv,\sIR}\,:\, \cB_{\sIR}\times \cV_{\sIR} \to \cD
\end{equation}
by the formulas
\begin{equation}\label{eq:deterministic_eff_force_series}
 \langle F_{\uv,\sIR}[\varphi],\psi\rangle
 :=\sum_{i=0}^\infty \sum_{m=0}^\infty \lambda^i\,\langle F^{i,m}_{\uv,\sIR},\psi\otimes\varphi^{\otimes m}\rangle,
\end{equation}
\begin{equation}\label{eq:deterministic_D_eff_force_series}
 \langle \rD F_{\uv,\sIR}[\varphi,\zeta],\psi\rangle
 :=\sum_{i=0}^\infty \sum_{m=0}^\infty \lambda^i\,(1+m)\,
 \langle F^{i,1+m}_{\uv,\sIR},\psi\otimes\zeta\otimes\varphi^{\otimes m}\rangle,
\end{equation}
where $\psi\in \sS(\bM)$, $\varphi\in\cB_{\sIR}$ and $\zeta\in\cV_{\sIR}$.
\end{dfn}

\begin{lem}\label{lem:functionals}
For $|\lambda|\leq \tilde R^{-6}$ the functionals $F_{\uv,\sIR}$, $\rD F_{\uv,\sIR}$ are well defined and satisfy the bounds
\begin{equation}
\begin{gathered}
 \|K_\sIR^{\ast\oo}\ast \partial_\uv^r F_{\uv,\sIR}[\varphi]\|_\cV \leq \tilde R\, [\uv]^{(\varepsilon-\sigma)r}\,[\sIR]^{-\dim(\xi)-3\varepsilon}
 \\
 \|K_\sIR^{\ast\oo}\ast \partial_\uv^r \rD F_{\uv,\sIR}[\varphi,\zeta]\|_\cV \leq \tilde R^{-1}\, [\uv]^{(\varepsilon-\sigma)r}\,[\sIR]^{-\dim(\xi)-3\varepsilon}\,\|\fP^\oo_\sIR\zeta\|_\cV
\end{gathered}
\end{equation}
for $r\in\{0,1\}$ and $\uv\in(0,1/2]$, $\sIR\in(0,1]$ and $\varphi\in\cB_\sIR$, $\zeta\in\cV_\sIR$. The functional $\rD F_{\uv,\sIR}$ is the directional derivative of $F_{\uv,\sIR}$. Moreover, given $\uIR\in(0,1]$ and $\varphi\in\cB_\uIR$ the flow equation~\eqref{eq:intro_flow_eq} holds for all $\sIR$ in a sufficiently small neighbourhood of $\uIR$.
\end{lem}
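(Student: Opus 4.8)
\noindent The plan is to derive everything from the deterministic bound \eqref{eq:bound_deterministic_main}, now available for all $r,s\in\{0,1\}$, $i,m\in\bN_0$, $\uv\in(0,1/2]$, $\sIR\in(0,1]$. First I fix $\varphi\in\sB_\sIR$ and write $\varphi=K^{\ast\oo}_\sIR\ast\varphi'$ with $\varphi':=\fP^\oo_\sIR\varphi\in\sV$ and $\|\varphi'\|_\sV<\tilde R^2\,[\sIR]^{-\dim(\varPhi)-2\varepsilon}$. Since $K^{\ast\oo}_\sIR$ is even and $\fP^\oo_\sIR K^{\ast\oo}_\sIR=\delta_\bM$, moving the kernel from the test function onto the coefficient gives
\begin{equation}
 K^{\ast\oo}_\sIR\ast\langle F^{i,m}_{\uv,\sIR},\Cdot\otimes\varphi^{\otimes m}\rangle
 =\langle K^{\ast\oo,\otimes(1+m)}_\sIR\ast F^{i,m}_{\uv,\sIR},\Cdot\otimes(\varphi')^{\otimes m}\rangle ,
\end{equation}
whose $\sV$-norm is $\le\|K^{\ast\oo,\otimes(1+m)}_\sIR\ast F^{i,m}_{\uv,\sIR}\|_{\sV^m}\|\varphi'\|_\sV^m$ by Definition~\ref{dfn:sV} (the same after $\partial_\uv^r$, which only hits $F^{i,m}_{\uv,\sIR}$). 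Inserting \eqref{eq:bound_deterministic_main} with $s=0$, the bound on $\|\varphi'\|_\sV$, and the identity $\varrho_\varepsilon(i,m)-m(\dim(\varPhi)+2\varepsilon)=-\dim(\xi)-\varepsilon+i(\dim(\lambda)-6\varepsilon)\le-\dim(\xi)-\varepsilon$ (using $\dim(\lambda)-6\varepsilon>0$, $[\sIR]\le1$), the $(i,m)$-term of $K^{\ast\oo}_\sIR\ast\partial_\uv^r F_{\uv,\sIR}[\varphi]$ is $\le|\lambda|^i\tilde R^{1+6i}\big(4(1+i)^2\,4(1+m)^2\big)^{-1}[\uv]^{(\varepsilon-\sigma)r}[\sIR]^{-\dim(\xi)-\varepsilon}$. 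As $|\lambda|^i\tilde R^{6i}\le1$ and $\sum_{i,m\ge0}\big(4(1+i)^2\,4(1+m)^2\big)^{-1}<1$, the Weierstrass test gives $\sV$-convergence of \eqref{eq:deterministic_eff_force_series}, so $F_{\uv,\sIR}[\varphi]\in\cD_\oo\subset\cD$ and the first bound holds.

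For $\rD F_{\uv,\sIR}$ I would repeat this on \eqref{eq:deterministic_D_eff_force_series}, now with $F^{i,1+m}_{\uv,\sIR}$ in place of $F^{i,m}_{\uv,\sIR}$ (so \eqref{eq:bound_deterministic_main} is used at $(i,1+m)$), the extra factor $1+m$, and $\zeta=K^{\ast\oo}_\sIR\ast(\fP^\oo_\sIR\zeta)$ in one slot, contributing a factor $\|\fP^\oo_\sIR\zeta\|_\sV$. Here $\varrho_\varepsilon(i,1+m)-m(\dim(\varPhi)+2\varepsilon)=-\dim(\xi)-\varepsilon+(\dim(\varPhi)+2\varepsilon)+i(\dim(\lambda)-6\varepsilon)\le-\dim(\xi)-\varepsilon$, the power of $\tilde R$ is $\tilde R^{6i-1}$, and $|\lambda|^i\tilde R^{6i-1}\le\tilde R^{-1}$. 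The only new point is the $m$-summation: since $F^{i,1+m}_{\uv,\sIR}=0$ unless $1+m\le3i$ (hence $i\ge1$), the inner sum $\sum_m(1+m)\big(4(2+m)^2\big)^{-1}$ is $\lesssim\log(2+i)$ and $\sum_{i\ge1}\big(4(1+i)^2\big)^{-1}\log(2+i)$ is a finite absolute constant $<1$; as $\tilde R>1$ this yields the second bound and $\rD F_{\uv,\sIR}[\varphi,\zeta]\in\cD$.

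To see that $\rD F_{\uv,\sIR}$ is the directional derivative, I would observe that $\varphi'\mapsto K^{\ast\oo}_\sIR\ast F_{\uv,\sIR}[K^{\ast\oo}_\sIR\ast\varphi']=\sum_{i,m}\lambda^i\langle K^{\ast\oo,\otimes(1+m)}_\sIR\ast F^{i,m}_{\uv,\sIR},\Cdot\otimes(\varphi')^{\otimes m}\rangle$ is, by the estimates above, a power series in $\varphi'\in\sV$ converging absolutely on the ball $\{\|\varphi'\|_\sV<\tilde R^2[\sIR]^{-\dim(\varPhi)-2\varepsilon}\}$, hence analytic there with Fréchet derivative obtained term by term; reindexing $m\mapsto1+m$ identifies the derivative at $\varphi'$ in the direction $\zeta'$ with $K^{\ast\oo}_\sIR\ast\rD F_{\uv,\sIR}[K^{\ast\oo}_\sIR\ast\varphi',K^{\ast\oo}_\sIR\ast\zeta']$, and $\varphi+t\zeta\in\sB_\sIR$ for small $|t|$ because $\sB_\sIR$ is defined by a strict inequality.

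Finally, for the flow equation I fix $\uIR\in(0,1]$, $\varphi\in\sB_\uIR$; by Lemma~\ref{lem:kernel_u_v} (as recorded after the definition of $\sB_\sIR$) there is a neighbourhood $I\subset(0,1]$ of $\uIR$ with $\varphi\in\sB_\sIR$ for $\sIR\in I$. On $I$, \eqref{eq:bound_deterministic_main} with $s=1$ — together with $F^{i,m}_{\uv,\sIR}=0$ for $m>3i$ (making the $m$-sum of $\big(4(1+m)\big)^{-1}$ only logarithmic in $i$) and $[\sIR]$ bounded away from $0$ — makes $\sum_{i,m}\lambda^i\langle\partial_\sIR F^{i,m}_{\uv,\sIR},\psi\otimes\varphi^{\otimes m}\rangle$ uniformly convergent, so $\sIR\mapsto\langle F_{\uv,\sIR}[\varphi],\psi\rangle$ is $C^1$ on $I$ with this series as derivative. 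Substituting the coefficient flow equation \eqref{eq:intro_flow_eq_i_m} into each summand, reindexing the resulting absolutely convergent sum by $(j,k)$ and $(i-j,m-k)$, and using the symmetry of $F^{j,1+k}_{\uv,\sIR}$ in its $1+k$ inputs together with the fact that $\fV\partial_\sIR G_{\uv,\sIR}$ pairs one input slot of the first factor against the output slot of the second, one recognizes the outcome as $-\langle\rD F_{\uv,\sIR}[\varphi,\partial_\sIR G_{\uv,\sIR}\ast F_{\uv,\sIR}[\varphi]],\psi\rangle$, which is \eqref{eq:intro_flow_eq}; here $\partial_\sIR G_{\uv,\sIR}\ast F_{\uv,\sIR}[\varphi]=K^{\ast\oo}_\sIR\ast\big((\fP^{2\oo}_\sIR\partial_\sIR G_{\uv,\sIR})\ast K^{\ast\oo}_\sIR\ast F_{\uv,\sIR}[\varphi]\big)\in\sV_\sIR$, since $\fP^{2\oo}_\sIR\partial_\sIR G_{\uv,\sIR}\in\cK$ by Lemma~\ref{lem:kernel_G} and $K^{\ast\oo}_\sIR\ast F_{\uv,\sIR}[\varphi]\in\sV$. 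The hard part will be exactly this last step — arranging the resummation of \eqref{eq:intro_flow_eq_i_m} so that the combinatorics and the reindexing reproduce precisely \eqref{eq:deterministic_D_eff_force_series}, and justifying the interchange of $\partial_\sIR$ with the infinite sum; the bounds themselves are bookkeeping once the $m$-sums are controlled by the vanishing of high-$m$ coefficients.
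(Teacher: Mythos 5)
Your proof is correct and follows the same route as the paper's terse proof (a single Weierstrass-M display for the $F_{\uv,\sIR}$ bound and a remark that everything else is straightforward); you have simply filled in the details, including the one point the paper suppresses, namely that the $m$-sum for $\rD F_{\uv,\sIR}$ only converges because $F^{i,1+m}_{\uv,\sIR}$ vanishes once $1+m>3i$. The resummation you flag at the end as "the hard part" is in fact routine given the absolute convergence you have already established: substituting Eq.~\eqref{eq:intro_flow_eq_i_m} and setting $j'=j$, $i''=i-j$, $k'=k$, $m''=m-k$, the $(i'',m'')$-sum contracted against $\fV\partial_\sIR G_{\uv,\sIR}\otimes\varphi^{\otimes m''}$ produces $\partial_\sIR G_{\uv,\sIR}\ast F_{\uv,\sIR}[\varphi]$, and the remaining $(j',k')$-sum, by the permutation-symmetry of $F^{j',1+k'}_{\uv,\sIR}$ in its last $1+k'$ slots, is precisely the series~\eqref{eq:deterministic_D_eff_force_series} defining $\rD F_{\uv,\sIR}$ at that argument.
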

\begin{proof}
The proof is straightforward. For example, using Def.~\ref{dfn:sV} of the space $\cV_m$ and the bounds for the effective force coefficients we obtain
\begin{multline}
 \|K_\sIR^{\ast\oo}\ast \partial_\uv^r F_{\uv,\sIR}[K_\sIR^{\ast\oo}\ast\varphi]\|
 \\
 \leq \sum_{i=0}^\infty\sum_{m=0}^\infty 
 \frac{|\lambda|^i \tilde R^{1+2(3i-m)}}{4(1+i)^2\,4(1+m)^{2}}\, [\uv]^{(\varepsilon-\sigma)r}\, [\sIR]^{\varrho_\varepsilon(i,m)} ~\tilde R^{2m}\, [\sIR]^{-m(\dim(\varPhi)+9\varepsilon)}
\end{multline}
for $\varphi\in\cV$ such that $\|\varphi\|_\cV\leq \tilde R^2\,[\sIR]^{-\dim(\varPhi)-9\varepsilon}$, which implies the desired bound. The flow equation~\eqref{eq:intro_flow_eq} follows from the flow equation~\eqref{eq:intro_flow_eq_i_m}.
\end{proof}

\begin{lem}\label{lem:solution}
For $|\lambda|\leq \tilde R^{-6}$, $r\in\{0,1\}$ and $\uv\in(0,1/2]$, $\sIR\in(0,1]$, it holds that:
\begin{itemize}
 \item[(A)] $\|K_\sIR^{\ast\oo}\ast \partial_\uv^r F_{\uv,1}[0]\|_{\cV} \leq 2^r\tilde R\,[\uv]^{r(\varepsilon-\sigma)}[\sIR]^{-\dim(\xi)-3\varepsilon}$,
 \item[(B)] $\|\fP_\sIR^\oo \partial_\uv^r(G_{\uv,\sIR}\ast F_{\uv,1}[0])\|_{\cV} \leq 3^r/3\,\tilde R^2\,[\uv]^{r(\varepsilon-\sigma)}[\sIR]^{-\dim(\varPhi)-9\varepsilon}$,
 \item[(C)] $F_{\uv,1}[0] = F_{\uv,\sIR}[G_{\uv,\sIR}\ast F_{\uv,1}[0]]$,
 \item[(D)] $\partial_\uv F_{\uv,1}[0]
 =
 (\partial_\uv F_{\uv,\sIR})[G_{\uv,\sIR}\ast F_{\uv,1}[0]] 
 + \rD F_{\uv,\sIR}[G_{\uv,\sIR}\ast F_{\uv,1}[0],\partial_\uv (G_{\uv,\sIR}\ast F_{\uv,1}[0])]$.
\end{itemize}
\end{lem}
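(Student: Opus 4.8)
The four assertions are linked: (A) and (B) are the \emph{a priori} bounds that guarantee the arguments appearing in (C) and (D) lie in the admissible sets $\sB_\sIR$ and $\sV_\sIR$ on which $F_{\uv,\sIR}$ and $\rD F_{\uv,\sIR}$ were defined, while (C) is the key stationarity identity and (D) its derivative in the UV cutoff. The plan is to prove (A), then (B), then (C), then (D), in that order, so that each step supplies the hypotheses needed by the next.

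\emph{Step (A).} Since $F_{\uv,1}[0]=\sum_{i=0}^\infty \lambda^i F^{i,0}_{\uv,1}$, this is the $m=0$ case of the bound in Lemma~\ref{lem:functionals} evaluated at $\varphi=0$ (which trivially lies in $\sB_1$). Concretely one sums the assumed bound~\eqref{eq:bound_deterministic_main} with $m=0$, $s=0$: using $\varrho_\varepsilon(i,0)=-\dim(\xi)-\varepsilon+i(\dim(\lambda)-6\varepsilon)\ge -\dim(\xi)-\varepsilon$ for $\sIR\le1$ together with $[\sIR]^{\varrho_\varepsilon(i,0)}\le[\sIR]^{-\dim(\xi)-\varepsilon}$ when $i\ge1$, and $\lambda\le\tilde R^{-6}$ to absorb $\tilde R^{2\cdot3i}$, the geometric series $\sum_i \tilde R/(4(1+i)^2)\cdot(\lambda\tilde R^6)^i\le \tilde R\sum_i 1/(4(1+i)^2)\le\tilde R$ closes.

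\emph{Step (B).} Write $\fP_\sIR^\oo G_{\uv,\sIR}\ast F_{\uv,1}[0] = \fP_\sIR^{2\oo}G_{\uv,\sIR}\ast (K_\sIR^{\ast\oo}\ast F_{\uv,1}[0])$ using $\fP_\sIR^\oo K_\sIR^{\ast\oo}=\delta_\bM$, estimate in $\sV$ by $\|\fP_\sIR^{2\oo}G_{\uv,\sIR}\|_\cK$ times the bound from (A) (with $r=0$), and invoke Remark~\ref{rem:tilde_R}, which gives $\|\fP_\sIR^{2\oo}G_{\uv,\sIR}\|_\cK\le \tfrac16\tilde R^{1/2}[\sIR]^{\dim(\varPhi)}$ (the $r=0$ case there reads $[\sIR]^{-\varepsilon}\cdot$ wait — more precisely one uses $\|\fP_\sIR^{2\oo}\partial_\sIR G_{\uv,\sIR}\|_\cK\le \tfrac16\tilde R^{1/2}[\sIR]^{-\varepsilon}$ and integrates in $\sIR$, since $G_{\uv,1}=0$, to get $\|\fP_\sIR^{2\oo}G_{\uv,\sIR}\|_\cK\lesssim\tilde R^{1/2}[\sIR]^{\sigma-\varepsilon}$, hence $\lesssim \tilde R^{1/2}[\sIR]^{\dim(\varPhi)+2\varepsilon+\sigma-3\varepsilon}$; the exponents match $-\dim(\varPhi)-2\varepsilon$ after multiplying by $[\sIR]^{-\dim(\xi)-\varepsilon}=[\sIR]^{-\sigma-\dim(\varPhi)-\varepsilon}$ using $\dim(\xi)=\dim(\varPhi)+\sigma$, with room to spare because $3\varepsilon<\varepsilon_\diamond\le\sigma$). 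One then checks the constant: $\tfrac16\tilde R^{1/2}\cdot\tilde R\le\tfrac13\tilde R^2$ since $\tilde R>1$. Consequently $G_{\uv,\sIR}\ast F_{\uv,1}[0]\in\sB_\sIR$ (in fact with room $\tfrac13$ rather than $1$), and similarly its $\partial_\uv$-derivative, after using the $r=1$ bound in (A) and the $r=1$ part of Remark~\ref{rem:tilde_R}, lies in $\sV_\sIR$ with the analogous estimate carrying a factor $[\uv]^{\varepsilon-\sigma}$.

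\emph{Step (C).} This is the stationarity identity already sketched in Section~\ref{sec:intro_flow}. Define $g_{\uv,\sIR}:=F_{\uv,1}[0]-F_{\uv,\sIR}[G_{\uv,\sIR}\ast F_{\uv,1}[0]]$, which by Step (B) makes sense for all $\sIR\in(0,1]$ and equals $0$ at $\sIR=1$ because $G_{\uv,1}=0$ and $F_{\uv,1}[0]$ is exactly the argument. Differentiating in $\sIR$ and using the flow equation~\eqref{eq:intro_flow_eq} from Lemma~\ref{lem:functionals} together with the chain rule (the $\partial_\sIR$ acting on the explicit $\sIR$-dependence of $F_{\uv,\sIR}$ cancels against one term, leaving the directional-derivative contribution along $\partial_\sIR G_{\uv,\sIR}\ast F_{\uv,1}[0]$), one finds that $g_{\uv,\sIR}$ solves the homogeneous linear ODE $\partial_\sIR g_{\uv,\sIR}=-\rD F_{\uv,\sIR}[G_{\uv,\sIR}\ast F_{\uv,1}[0],\partial_\sIR G_{\uv,\sIR}\ast g_{\uv,\sIR}]$ with $g_{\uv,1}=0$. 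Since by Lemma~\ref{lem:functionals} the linear map $\zeta\mapsto\rD F_{\uv,\sIR}[\cdot,\zeta]$ is bounded on $\sV_\sIR$ (uniformly in $\sIR$ on compact subintervals of $(0,1]$, the only singularity being the integrable $[\sIR]^{-\dim(\xi)-\varepsilon}$ near $\sIR=0$, but here we run the ODE from $\sIR=1$ downward and just need uniqueness), Grönwall forces $g_{\uv,\sIR}=0$ for all $\sIR\in(0,1]$, and the $\sIR\searrow0$ limit then gives it for $\sIR=0$ as well (where it degenerates to $F_{\uv,1}[0]=F_\uv[G_\uv\ast F_{\uv,1}[0]]$, i.e. Eq.~\eqref{eq:intro_mild}).

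\emph{Step (D).} This is obtained by differentiating the identity in (C) with respect to the UV cutoff $\uv$. The left side is $\partial_\uv F_{\uv,1}[0]$. The right side, $F_{\uv,\sIR}[G_{\uv,\sIR}\ast F_{\uv,1}[0]]$, depends on $\uv$ both through the explicit coefficient dependence — contributing $(\partial_\uv F_{\uv,\sIR})[G_{\uv,\sIR}\ast F_{\uv,1}[0]]$ — and through the argument $G_{\uv,\sIR}\ast F_{\uv,1}[0]$, contributing $\rD F_{\uv,\sIR}[G_{\uv,\sIR}\ast F_{\uv,1}[0],\partial_\uv(G_{\uv,\sIR}\ast F_{\uv,1}[0])]$ by the chain rule, since $\rD F_{\uv,\sIR}$ is the directional derivative of $F_{\uv,\sIR}$ (Lemma~\ref{lem:functionals}). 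One must justify that $\uv\mapsto F_{\uv,1}[0]$ and $\uv\mapsto G_{\uv,\sIR}$ are differentiable with the derivatives represented by the term-by-term differentiated series; this follows from the $r=1$ bounds in~\eqref{eq:bound_deterministic_main} and Lemma~\ref{lem:kernel_G}, which give locally uniform convergence of the differentiated series, and from the fact — guaranteed by the $r=1$ case of Step (B) — that the derivative $\partial_\uv(G_{\uv,\sIR}\ast F_{\uv,1}[0])$ lands in $\sV_\sIR$, the domain of the second slot of $\rD F_{\uv,\sIR}$.

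\emph{Main obstacle.} The routine parts are the geometric-series bookkeeping in (A), the kernel estimate in (B), and the chain rule in (D); the genuinely delicate step is (C), specifically verifying that $g_{\uv,\sIR}$ satisfies the claimed \emph{linear} ODE — this requires that each appearance of $G_{\uv,\sIR}\ast F_{\uv,1}[0]$ and of $G_{\uv,\sIR}\ast g_{\uv,\sIR}$ stays inside the sets $\sB_\sIR$, $\sV_\sIR$ on which the non-linear functional $F_{\uv,\sIR}$ and its derivative are even defined and where the flow equation of Lemma~\ref{lem:functionals} is available, and that the near-$\sIR=0$ singularity of $\rD F_{\uv,\sIR}$ does not spoil the uniqueness argument. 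Working from $\sIR=1$ downward and using the strict factor $\tfrac13$ in (B) to leave room in $\sB_\sIR$ is what makes this go through.
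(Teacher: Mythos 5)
Your overall plan---prove (A), then (B), then (C), then (D) in a strict linear order---does not work, and the gap is in Step~(A). You claim to prove (A) for all $\sIR\in(0,1]$ by summing the assumed bound~\eqref{eq:bound_deterministic_main} with $m=0$, $s=0$, and quoting $[\sIR]^{\varrho_\varepsilon(i,0)}\le[\sIR]^{-\dim(\xi)-\varepsilon}$. But that bound controls $\|K_\sIR^{\ast\oo}\ast F^{i,0}_{\uv,\sIR}\|_\sV$, in which the smoothing scale and the flow parameter are the \emph{same} $\sIR$. What (A) asks you to bound is $\|K_\sIR^{\ast\oo}\ast F^{i,0}_{\uv,1}\|_\sV$, where the coefficient is frozen at flow parameter $1$ and the mollification scale $\sIR$ runs independently over $(0,1]$. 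Summing your bounds over $i$ would give you an estimate on $\|K_\sIR^{\ast\oo}\ast F_{\uv,\sIR}[0]\|_\sV$, a genuinely different object: indeed $F_{\uv,1}[0]=F_{\uv,\sIR}[G_{\uv,\sIR}\ast F_{\uv,1}[0]]$ by (C), \emph{not} $F_{\uv,\sIR}[0]$. Plugging $\sIR=1$ into~\eqref{eq:bound_deterministic_main} only gives (A) at $\sIR=1$; to get it at smaller $\sIR$ via Lemma~\ref{lem:kernel_u_v} you would pay a factor $\|K_{\sIR,1}^{\ast\oo}\|_\cK\approx 2^\oo[\sIR]^{-2\oo}$, which is nowhere near the target exponent $-\dim(\xi)-\varepsilon$ for the $\oo$ the hypothesis produces.

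This means the logical order (A)$\Rightarrow$(B)$\Rightarrow$(C)$\Rightarrow$(D) is actually circular: for $\sIR<1$, (A) requires the stationarity identity (C) together with Lemma~\ref{lem:functionals} applied at scale $\sIR$, which in turn requires (B) (to put $G_{\uv,\sIR}\ast F_{\uv,1}[0]$ in $\sB_\sIR$), which requires (A) at scales above $\sIR$. The paper resolves the circularity with a genuine bootstrap in $\sIR$: prove the full statement at $\sIR=1$ (trivial since $G_{\uv,1}=0$); assume it on $[\nu,1]$; use Lemma~\ref{lem:kernel_u_v} to extend (A) with a degraded constant $2\tilde R$ down to $[\tau\nu,1]$ for $\tau$ close to $1$; deduce (B) there by writing $G_{\uv,\sIR}=-\int_\sIR^1\partial_\uIR G_{\uv,\uIR}\,\rd\uIR$ and estimating each slice with the inductive (A); deduce (C), (D) via the ODE argument you describe; and finally \emph{recover} (A) with the sharp constant $\tilde R$ on $[\tau\nu,1]$ from (C) and Lemma~\ref{lem:functionals}; iterate with $\nu=\tau^n$. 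Your Steps (C) and (D) match the paper's argument, and you correctly identify the role of the safety factor $\tfrac13$ in (B), but because your Step~(A) fails for $\sIR<1$ the whole chain as written does not close; it needs to be replaced by the downward continuation scheme above.
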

\begin{proof}
By Lemma~\ref{lem:functionals} and the equality $G_{\uv,1}=0$ the statement holds true for $\sIR=1$. Chose $\nu\in(0,1]$ and assume that the statement holds true for all $\sIR\in[\nu,1]$. Then by Lemma~\ref{lem:kernel_u_v} we have
\begin{equation}
 \|K_{\uIR}^{\ast\oo}\ast \partial_\uv^r F_{\uv,1}[0]\|_{\cV}
 \leq
 2^{1+r}\, \|K_{\sIR}^{\ast\oo}\ast \partial_\uv^r F_{\uv,1}[0]\|_{\cV}
 \leq 2^{1+r}\tilde R\,[\uv]^{r(\varepsilon-\sigma)}[\sIR]^{-\dim(\xi)-3\varepsilon}
\end{equation}
for all $\uIR\in[\tau\sIR,1]$, $\sIR\in[\nu,1]$ for some $\tau\in(0,1)$ depending only on $\tilde R$. As a result,  
\begin{equation}
 \|K_{\sIR}^{\ast\oo}\ast \partial_\uv^r F_{\uv,1}[0]\|_{\cV} \leq 2^{1+r}\tilde R\,[\uv]^{r(\varepsilon-\sigma)}[\sIR]^{-\dim(\xi)-3\varepsilon}
\end{equation}
for all $\sIR\in[\tau\nu,1]$. This together with Remark~\ref{rem:tilde_R} and the estimate
\begin{equation}
 \|\fP_\sIR^\oo \partial_\uv^r(G_{\uv,\sIR}\ast F_{\uv,1}[0])\|_{\cV} \leq 
 \sum_{u+v=r}\int_\sIR^1 \|\fP_\uIR^{2\oo}\partial_\uv^u\partial_\uIR G_{\uv,\uIR}\|_\cK\, 
 \|K_{\uIR}^{\ast\oo}\ast \partial_\uv^v F_{\uv,1}[0]\|_{\cV}\,\rd\uIR
\end{equation}
implies Part (B) for all $\sIR\in[\tau\nu,1]$ and shows that $G_{\uv,\sIR}\ast F_{\uv,1}[0]\in\cB_\sIR$. Using Lemma~\ref{lem:functionals} and the reasoning presented in Sec.~\ref{sec:intro_flow} below Eq.~\eqref{eq:intro_stationary_relation} we prove that Parts (C) and (D) hold for all $\sIR\in[\tau\uv,1]$. Part (A) for all $\sIR\in[\tau\uv,1]$ follows now from Lemma~\ref{lem:functionals}. To complete the proof of the theorem it is enough to apply the above reasoning recursively with $\nu=\tau^n$, $n\in\bN_0$.
\end{proof}

\begin{thm}\label{thm:solution}
Fix $R>1$. Assume that for all $i,m\in\bN_+$, $a\in\frM^m$ such that $\varrho(i,m)+|a|\leq0$ there exists $\oo\in\bN_0$ such that
\begin{equation}
 \|K^{\ast\oo}_\sIR\ast \partial_\uv^r f^{i,m,a}_{\uv,\sIR}\|_\cV \leq R\,[\uv]^{(\varepsilon-\sigma) r}[\sIR]^{\varrho_{3\varepsilon}(i,m)+|a|}
\end{equation}
for all \mbox{$r\in\{0,1\}$} and $\uv\in(0,1/2]$, $\sIR\in(0,1]$. There exists $\oo\in\bN_0$ and a universal constant $c>1$ such that for $\tilde R=c\,R$, $|\lambda|<\tilde R^{-6}$ and $\uv\in(0,1/2]$ the function $\varPhi_\uv:=G_\uv\ast F_{\uv,\sIR}[0]$ is well defined, solves Eq.~\eqref{eq:intro_mild} and satisfies the bound
\begin{equation}
 \|K_\sIR^{\ast\oo}\ast \partial_\uv^r \varPhi_\uv\|_{\cV} \leq \tilde R^2\, [\uv]^{(\varepsilon-\sigma) r}[\sIR]^{-\dim(\varPhi)-9\varepsilon}
\end{equation}
for all $\uv,\sIR\in(0,1]$.
\end{thm}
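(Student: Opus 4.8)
The strategy is to first promote the hypothesis on the enhanced noise to the full family of bounds~\eqref{eq:bound_deterministic_main}, and then run the deterministic construction of Section~\ref{sec:solution}. For Step~1: the assumption of the theorem is exactly the hypothesis of Theorem~\ref{thm:deterministic_taylor}; applying it and then Theorem~\ref{thm:deterministic_long_range} yields, for every $i,m\in\bN_0$, an $\oo\in\bN_0$ with
\[
 \|K_{\sIR}^{\ast\oo,\otimes(1+m)}\ast\partial_\uv^r\partial_\sIR^s F_{\uv,\sIR}^{i,m}\|_{\sV^m}\lesssim R^{1+3i-m}\,[\uv]^{(\varepsilon-\sigma)r}\,[\sIR]^{\varrho_\varepsilon(i,m)-\sigma s}
\]
for $r,s\in\{0,1\}$, $\uv\in(0,1/2]$, $\sIR\in(0,1]$. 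Since there are only finitely many relevant pairs $(i,m)$, the Remark after Theorem~\ref{thm:deterministic_taylor} provides an $\oo\in\bN_0$ and a universal $c_0>1$ for which $\tilde R=c_0R$ makes the hypothesis of Theorem~\ref{thm:deterministic_convergence} hold; that theorem then yields~\eqref{eq:bound_deterministic_main} for all $r,s\in\{0,1\}$, $i,m\in\bN_0$, $\uv\in(0,1/2]$, $\sIR\in(0,1]$. Enlarging $\oo$ is harmless by the Remark after Theorem~\ref{thm:deterministic_convergence}. Henceforth $\oo$ is fixed and $\tilde R=cR$ with $c\geq c_0$, where $c$ will be increased a finite number of times below by universal factors.

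\emph{Step 2: solution and equation.} With~\eqref{eq:bound_deterministic_main} in hand, Lemma~\ref{lem:functionals} shows that for $\lambda\leq\tilde R^{-6}$ the functionals $F_{\uv,\sIR}$ and $\rD F_{\uv,\sIR}$ are well defined and obey the stated estimates, and Lemma~\ref{lem:solution} supplies the bounds (A)--(B) and the identities (C)--(D) for $F_{\uv,1}[0]$; in particular the series~\eqref{eq:solution_series} defining $\varPhi_\uv=G_\uv\ast F_{\uv,1}[0]$ converges absolutely, so $\varPhi_\uv$ is well defined (and one checks $\varPhi_\uv\in C^\infty(\bR^\rdim)$, using that $G_\uv$ is smooth with all derivatives in $L^1(\bR^\rdim)$ and that every $F^{i,0}_{\uv,1}$ with $i\geq1$ is a smooth function). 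For $\sIR\leq\uv/2$ one has $G_{\uv,\sIR}=G_\uv$ and $F_{\uv,\sIR}[\,\cdot\,]=F_\uv[\,\cdot\,]$ (Remark after Definition~\ref{dfn:kernel_G}; the latter because $\partial_\sIR F^{i,m}_{\uv,\sIR}=0$ on $[0,\uv/2]$, the right-hand side of the flow equation vanishing there), so Lemma~\ref{lem:solution}(C) gives $F_{\uv,1}[0]=F_\uv[\varPhi_\uv]$, and convolving with $G_\uv$ yields $\varPhi_\uv=G_\uv\ast F_\uv[\varPhi_\uv]$, i.e.\ Eq.~\eqref{eq:intro_mild}.

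\emph{Step 3: the bound on $\varPhi_\uv$.} Fix $r\in\{0,1\}$ and $\sIR\in(0,1]$ and write $G_\uv=G_{\uv,\sIR}+G_{\uv\shortparallel\sIR}$. Using $G_{\uv,\sIR}=K_\sIR^{\ast\oo}\ast\fP_\sIR^\oo G_{\uv,\sIR}$, $\|K_\sIR^{\ast\oo}\|_\cK=1$ and Lemma~\ref{lem:solution}(B),
\[
 \|K_\sIR^{\ast\oo}\ast G_{\uv,\sIR}\ast\partial_\uv^r F_{\uv,1}[0]\|_\sV\leq\|\fP_\sIR^\oo G_{\uv,\sIR}\ast\partial_\uv^r F_{\uv,1}[0]\|_\sV\leq\tfrac13\tilde R^2\,[\uv]^{(\varepsilon-\sigma)r}[\sIR]^{-\dim(\varPhi)-2\varepsilon}.
\]
The kernel $G_{\uv\shortparallel\sIR}$ vanishes for $\sIR\leq\uv/2$, and for $\sIR>\uv/2$ it is supported in $\{|x|\lesssim[\sIR]\}$ with $\|G_{\uv\shortparallel\sIR}\|_{L^1(\bM)}\lesssim[\sIR]^\sigma$ (because $|G(x)|\lesssim|x|^{\sigma-\rdim}$ and $\sigma>0$; for $\sIR>1/2$ simply $\|G_{\uv\shortparallel\sIR}\|_{L^1}\leq\|G_\uv\|_{L^1}+\|G_{\uv,\sIR}\|_{L^1}\lesssim1$). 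Commuting the convolutions and invoking Lemma~\ref{lem:solution}(A),
\[
 \|K_\sIR^{\ast\oo}\ast G_{\uv\shortparallel\sIR}\ast\partial_\uv^r F_{\uv,1}[0]\|_\sV=\|G_{\uv\shortparallel\sIR}\ast\bigl(K_\sIR^{\ast\oo}\ast\partial_\uv^r F_{\uv,1}[0]\bigr)\|_\sV\lesssim[\sIR]^\sigma\,\tilde R\,[\uv]^{(\varepsilon-\sigma)r}[\sIR]^{-\dim(\xi)-\varepsilon},
\]
and $[\sIR]^{\sigma-\dim(\xi)-\varepsilon}=[\sIR]^{-\dim(\varPhi)-\varepsilon}\leq[\sIR]^{-\dim(\varPhi)-2\varepsilon}$ since $\dim(\xi)-\sigma=\dim(\varPhi)$ and $[\sIR]\leq1$. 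For $r=1$ one additionally has, by Leibniz, $\partial_\uv\varPhi_\uv=(\partial_\uv G_\uv)\ast F_{\uv,1}[0]+G_\uv\ast\partial_\uv F_{\uv,1}[0]$; the second term is handled by the two estimates above applied to $\partial_\uv F_{\uv,1}[0]$, and for the first term $\partial_\uv G_\uv$ is supported in $\{|x|\lesssim[\uv]\}$ with $\|\fP_\uv^{2\oo}\partial_\uv G_\uv\|_\cK\lesssim[\uv]^\sigma$ (proved as in Lemma~\ref{lem:kernel_G}), so writing $\partial_\uv G_\uv=(\fP_\uv^{2\oo}\partial_\uv G_\uv)\ast K_\uv^{\ast2\oo}$ and using Lemma~\ref{lem:solution}(A) at $\sIR=\uv$,
\[
 \|K_\sIR^{\ast\oo}\ast(\partial_\uv G_\uv)\ast F_{\uv,1}[0]\|_\sV\lesssim[\uv]^\sigma\,\tilde R\,[\uv]^{-\dim(\xi)-\varepsilon}=\tilde R\,[\uv]^{-\dim(\varPhi)-\varepsilon}\leq\tilde R\,[\uv]^{\varepsilon-\sigma}[\sIR]^{-\dim(\varPhi)-2\varepsilon},
\]
where the last inequality uses $[\uv]^{-\dim(\varPhi)-\varepsilon}\leq[\uv]^{\varepsilon-\sigma}$ (valid by the choice of $\varepsilon$, as $2\varepsilon\leq2\sigma-\rdim/2$) and $[\sIR]\leq1$. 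Adding up, $\|K_\sIR^{\ast\oo}\ast\partial_\uv^r\varPhi_\uv\|_\sV\leq(\tfrac13\tilde R^2+C\tilde R)[\uv]^{(\varepsilon-\sigma)r}[\sIR]^{-\dim(\varPhi)-2\varepsilon}$ with $C$ universal, and choosing $c$ large enough that $\tilde R=cR\geq\tfrac32 C$ (and $c\geq c_0$, and large enough for the lemmas above) gives the asserted bound.

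The only substantial point is Step~3: the ``short--range'' kernels $G_{\uv\shortparallel\sIR}$ and $\partial_\uv G_\uv$ must be tested against $F_{\uv,1}[0]$ at the scale ($[\sIR]$, resp.~$[\uv]$) where the bulk of the kernel sits---a crude use of $\|K_\sIR^{\ast\oo}\|_\cK$ would fail when $\uv$ and $\sIR$ differ greatly---and the single convolution with $G$ must be seen to improve the regularity from $-\dim(\xi)$ to $-\dim(\varPhi)$, which is precisely the gain that makes the bound close after a purely universal enlargement of $c$.
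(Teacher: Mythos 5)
Your proposal is correct. Steps 1--2 match the paper: the chain Theorem~\ref{thm:deterministic_taylor} $\to$ Theorem~\ref{thm:deterministic_long_range} $\to$ Theorem~\ref{thm:deterministic_convergence} produces~\eqref{eq:bound_deterministic_main} with $\tilde R = cR$; Lemma~\ref{lem:functionals} gives well-definedness; and your explicit deduction that the equation holds (via Lemma~\ref{lem:solution}(C) at a scale $\sIR\le\uv/2$ where $G_{\uv,\sIR}=G_\uv$ and $F_{\uv,\sIR}=F_\uv$) is exactly the argument sketched in Sec.~\ref{sec:intro_flow}, which the paper's proof of Theorem~\ref{thm:solution} leaves implicit.

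Step~3, however, takes a genuinely different route. The paper writes $G_\uv=-\int_0^1\partial_\uIR G_{\uv,\uIR}\,\rd\uIR$ and estimates
\begin{equation}
 \|K_\sIR^{\ast\oo}\ast\partial_\uv^r\varPhi_\uv\|_\sV\le\sum_{u+v=r}\int_0^1\|\fP_\uIR^\oo\partial_\uv^u\partial_\uIR G_{\uv,\uIR}\|_\cK\,\|K_\uIR^{\ast\oo}\ast K_\sIR^{\ast\oo}\ast\partial_\uv^v F_{\uv,1}[0]\|_\sV\,\rd\uIR,
\end{equation}
matching each kernel scale $\uIR$ with the estimate of Lemma~\ref{lem:solution}(A) at scale $\uIR\vee\sIR$ via Lemma~\ref{lem:kernel_u_v}; this treats $r\in\{0,1\}$ uniformly through the sum over $u+v=r$. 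You instead split $G_\uv=G_{\uv,\sIR}+G_{\uv\shortparallel\sIR}$ at the single output scale $\sIR$, handle the first piece by reusing Lemma~\ref{lem:solution}(B) (which has the scale-by-scale integration over $[\sIR,1]$ already built in), and handle the second piece by the $L^1$ estimate $\|G_{\uv\shortparallel\sIR}\|_{L^1}\lesssim[\sIR]^\sigma$; for $r=1$ you need an extra Leibniz step and the bound $\|\fP_\uv^{2\oo}\partial_\uv G_\uv\|_\cK\lesssim[\uv]^\sigma$, neither of which is needed in the paper. The two approaches are quantitatively equivalent, but they buy different things: the paper's version is terser and unified over $r$, while yours makes the single power of $G$ that lifts $-\dim(\xi)$ to $-\dim(\varPhi)$ visible in one $L^1$ estimate and leans harder on Lemma~\ref{lem:solution}(B) as a black box. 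Your numerology ($\|G_{\uv\shortparallel\sIR}\|_{L^1}\lesssim[\sIR]^\sigma$ via the support $\{|x|\lesssim[\sIR]\}$ and $|G(x)|\lesssim|x|^{\sigma-\rdim}$ for $\sIR\le1/2$, crude estimate for $\sIR>1/2$; the inequality $2\varepsilon\le2\sigma-\rdim/2$ from $\varepsilon<\varepsilon_\diamond/3\le\dim(\lambda)/27$ and $\sigma>\rdim/3$) checks out.
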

\begin{proof}
By Theorems~\ref{thm:deterministic_convergence},~\ref{thm:deterministic_taylor} and~\ref{thm:deterministic_long_range} the assumption implies that there exists $\oo\in\bN_0$ and a universal constant $c>1$ such that the bound~\eqref{eq:bound_deterministic_main} holds true for all $r,s\in\{0,1\}$, $i,m\in\bN_0$ and $\uv\in(0,1/2]$, $\sIR\in(0,1]$ with $\tilde R=c\,R$. The function $\varPhi_\uv$ is well-defined by Lemma~\ref{lem:functionals}. To conclude, we observe that
\begin{equation}
 \|K_\sIR^{\ast\oo}\ast \partial_\uv^r \varPhi_\uv\|_{\cV}
 \leq
 \sum_{u+v=r} \int_0^1 \|\fP^\oo_\uIR \partial_\uv^u \partial_\uIR G_{\uv,\uIR}\|_\cK\, 
 \|K_\uIR^{\ast\oo}\ast K_\sIR^{\ast\oo}\ast \partial_\uv^v F_{\uv,1}[0]\|_{\cV}\,\rd \uIR,
\end{equation}
use Remark~\ref{rem:tilde_R} and apply Lemma~\ref{lem:solution} with $\sIR$ replaced by $\uIR\vee\sIR$.
\end{proof}

\section{Cumulants of the effective force coefficients}\label{sec:cumulants_estimates}

In the remaining part of the paper we show that the assumption of Theorem~\ref{thm:solution} is satisfied for some random $R$ such that $\bE R^{n}<\infty$ for all $n\in\bN_+$. This will follow from the bounds for the joint cumulants of the effective force coefficients $F^{i,m,a}_{\uv,\sIR}$ proved in Sec.~\ref{sec:cumulants_uniform_bounds} and the probabilistic estimates proved in Sec.~\ref{sec:probabilistic}. The bounds for the joint cumulants of $F^{i,m,a}_{\uv,\sIR}$ involve the norm $\|\Cdot\|_{\cV^{\mathsf{m}}}$ introduced in Sec.~\ref{sec:topology_cumulants}. We prove the above-mentioned bounds using a certain flow equation given in Sec.~\ref{sec:flow_equation_cumulants}. We consider joint cumulants of $F^{i,m,a}_{\uv,\sIR}$ instead of $f^{i,m,a}_{\uv,\sIR}$ because there is no flow equation for the joint cumulants of $f^{i,m,a}_{\uv,\sIR}$.

\begin{dfn}
Let $p\in\bN_+$, $I=\{1,\ldots,p\}$ and $\zeta_q$, $q\in I$, be random variables. The joint cumulant~\cite{peccati2011wiener} of the multi-set $(\zeta_q)_{q\in I}=(\zeta_1,\ldots,\zeta_p)$ is defined by
\begin{equation}
 \llangle \zeta_1,\ldots,\zeta_p\rrangle
 \equiv
 \llangle(\zeta_q)_{q\in I}\rrangle
 = 
 (-\ri)^p \partial_{t_1}\ldots\partial_{t_p}\log\bE \exp(\ri t_1 \zeta_1+\ldots+\ri t_p \zeta_p) \big|_{t_1=\ldots=t_p=0}.
\end{equation}
In particular, $\llangle \zeta\rrangle\equiv\bE\zeta$, $\llangle \zeta_1,\zeta_2\rrangle=\llangle \zeta_1\zeta_2\rrangle-\llangle \zeta_1\rrangle\llangle \zeta_2\rrangle$.
\end{dfn}

\begin{lem}\label{lem:cumulants}
Let $p\in\bN_+$, $I=\{1,\ldots,p\}$ and $\zeta_1,\ldots,\zeta_p,\Phi,\Psi$ be random variables. It holds 
\begin{equation}\label{eq:expectation_cumulants}
 \llangle\zeta_1\ldots\zeta_p\rrangle
 =\sum_{r=1}^p
 \sum_{\substack{I_1,\ldots,I_r\subset I,\\I_1\cup\ldots\cup I_r=I\\I_1,\ldots,I_r\neq \emptyset}}
 \llangle(\zeta_q)_{q\in I_1}\rrangle
 \ldots
 \llangle(\zeta_q)_{q\in I_r}\rrangle,
\end{equation} 
\begin{equation}\label{eq:cumulants_product}
 \llangle(\zeta_q)_{q\in I},\Phi\Psi\rrangle
 =
 \llangle(\zeta_q)_{q\in I},\Phi,\Psi\rrangle
 +
 \sum_{\substack{I_1,I_2\subset I\\I_1\cup I_2= I}}
 \llangle(\zeta_q)_{q\in I_1},\Phi\rrangle
 ~
 \llangle(\zeta_q)_{q\in I_2},\Psi\rrangle.
\end{equation}
\end{lem}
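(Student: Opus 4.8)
The plan is to derive both identities directly from the definition of the joint cumulant as a logarithmic derivative of the characteristic function. The only thing to set up first is the evident extension of that definition: for an arbitrary finite family $(\eta_q)_{q\in J}$ of random variables with finite moments one has
\begin{align*}
 \bE\Big[\prod_{q\in J}\eta_q\Big]
 &=(-\ri)^{|J|}\Big(\prod_{q\in J}\partial_{t_q}\Big)\bE\exp\Big(\ri\sum_{q\in J}t_q\eta_q\Big)\Big|_{t=0},\\
 \llangle(\eta_q)_{q\in J}\rrangle
 &=(-\ri)^{|J|}\Big(\prod_{q\in J}\partial_{t_q}\Big)\log\bE\exp\Big(\ri\sum_{q\in J}t_q\eta_q\Big)\Big|_{t=0},
\end{align*}
and, since every random variable occurring here (and everywhere in the paper) is a polynomial in the Gaussian noise, all the characteristic functions involved are smooth near the origin and these differentiations are legitimate.

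To prove \eqref{eq:expectation_cumulants} I would set $M(t):=\bE\exp(\ri\sum_{q\in I}t_q\zeta_q)$ and $C:=\log M$, so that $M=e^{C}$ with $M(0)=1$ and $C(0)=0$. The combinatorial core is the identity
\begin{equation*}
 \Big(\prod_{q\in I}\partial_{t_q}\Big)e^{C}=e^{C}\sum_{\pi}\ \prod_{B\in\pi}\Big(\prod_{q\in B}\partial_{t_q}\Big)C,
\end{equation*}
where $\pi$ ranges over all set partitions of $I$ into nonempty blocks $B$. I would prove it by induction on $|I|$: bringing in one fresh variable $t_\star$, the contribution of $\partial_{t_\star}e^{C}=e^{C}\,\partial_{t_\star}C$ produces exactly the partitions of $I\cup\{\star\}$ in which $\{\star\}$ is a singleton block, while the contributions in which $\partial_{t_\star}$ differentiates one of the factors $(\prod_{q\in B}\partial_{t_q})C$ produce exactly the partitions in which $\star$ is adjoined to an existing block. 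Evaluating at $t=0$ (where $e^{C}=1$) and distributing the prefactor $(-\ri)^{|I|}=\prod_{B\in\pi}(-\ri)^{|B|}$ over the blocks turns the left side into $\bE[\zeta_1\cdots\zeta_p]$ and the factor attached to $B$ into $\llangle(\zeta_q)_{q\in B}\rrangle$; grouping the partitions by their number $r$ of blocks gives exactly \eqref{eq:expectation_cumulants}.

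To prove \eqref{eq:cumulants_product} I would enlarge the generating function to $N(t,s_1,s_2):=\bE\exp(\ri\sum_{q\in I}t_q\zeta_q+\ri s_1\Phi+\ri s_2\Psi)$, set $K:=\log N$, and also introduce $M^{\dagger}(t,u):=\bE\exp(\ri u\,\Phi\Psi+\ri\sum_{q\in I}t_q\zeta_q)$. From $\partial_u M^{\dagger}|_{u=0}=\ri\,\bE[\Phi\Psi\,e^{\ri\sum_q t_q\zeta_q}]=-\ri\,\partial_{s_1}\partial_{s_2}N|_{s_1=s_2=0}$, together with $M^{\dagger}(t,0)=M(t)=N(t,0,0)$ and $\partial_{s_1}\partial_{s_2}K=\partial_{s_1}\partial_{s_2}N/N-(\partial_{s_1}N/N)(\partial_{s_2}N/N)$, one obtains
\begin{equation*}
 \partial_u\log M^{\dagger}\big|_{u=0}=-\ri\,\big(\partial_{s_1}\partial_{s_2}K+(\partial_{s_1}K)(\partial_{s_2}K)\big)\big|_{s_1=s_2=0}.
\end{equation*}
Applying $\prod_{q\in I}\partial_{t_q}$, setting $t=0$, and multiplying by $(-\ri)^{|I|+1}$: the left side becomes $\llangle(\zeta_q)_{q\in I},\Phi\Psi\rrangle$; the $\partial_{s_1}\partial_{s_2}K$ term becomes $\llangle(\zeta_q)_{q\in I},\Phi,\Psi\rrangle$; and for the $(\partial_{s_1}K)(\partial_{s_2}K)$ term the Leibniz rule $\prod_{q\in I}\partial_{t_q}(AB)=\sum(\prod_{q\in I_1}\partial_{t_q}A)(\prod_{q\in I_2}\partial_{t_q}B)$ — the sum running over the complementary pairs $I_1,I_2\subset I$ with $I_1\cup I_2=I$ and $I_1\cap I_2=\emptyset$ — together with $(-\ri)^{|I|+1}=(-\ri)^{|I_1|+1}(-\ri)^{|I_2|+1}$ produces $\sum_{I_1\cup I_2=I}\llangle(\zeta_q)_{q\in I_1},\Phi\rrangle\llangle(\zeta_q)_{q\in I_2},\Psi\rrangle$, which is \eqref{eq:cumulants_product}.

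Nothing here is deep, so there is no genuine obstacle; the part that wants the most care is the combinatorial bookkeeping — the induction establishing the partition identity for $(\prod_q\partial_{t_q})e^{C}$, the identification of set partitions (respectively complementary pairs of index sets) with the summands, and keeping the powers of $\ri$ straight — together with the small but necessary preliminary remark that the moment and cumulant formulas hold for any finite sub-family, which is what licenses differentiating in part of the variables while setting the others to zero.
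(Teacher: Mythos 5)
Your proof is correct. The paper itself gives no argument for Lemma~\ref{lem:cumulants} — it delegates entirely to Proposition~3.2.1 of Peccati--Taqqu — so there is no in-paper proof to compare against; what you have done is supply a self-contained derivation via the cumulant generating function, which is one of the two standard routes (the other, used by Peccati--Taqqu, goes through the lattice of set partitions and M\"obius inversion). Both halves of your argument are sound: the Fa\`a di Bruno/partition identity for $\prod_{q}\partial_{t_q}e^{C}$ with the singleton-block vs.\ adjoin-to-existing-block induction is the standard proof of the moment--cumulant relation, and the manipulation with $N$, $K$, $M^{\dagger}$ and the Leibniz rule cleanly yields the product formula, with the powers of $\ri$ correctly accounted for. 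One reading issue worth flagging explicitly, since the paper's notation is loose: in~\eqref{eq:expectation_cumulants} the inner sum over $I_1,\ldots,I_r$ must be understood as a sum over \emph{unordered} collections of pairwise disjoint nonempty parts (i.e.\ set partitions of $I$ with $r$ blocks), and in~\eqref{eq:cumulants_product} the pair $(I_1,I_2)$ is an ordered pair of disjoint, possibly empty sets with $I_1\cup I_2=I$; you use exactly these readings (as the Leibniz-rule step makes explicit for the second one), and they are the only ones that make the identities true, so the interpretation is correct even though you did not belabor it.
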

\begin{rem}
For the proof of the above lemma see e.g. Proposition~3.2.1 in~\cite{peccati2011wiener}. 
\end{rem}

\begin{dfn}\label{dfn:notation_cumulants_distributions}
Given $n\in\bN_+$, $I=\{1,\ldots,n\}$, $m_1,\ldots,m_n\in\bN_0$ and random distributions \mbox{$\zeta_q\in\sS'(\bM^{1+m_q})$}, $q\in I$, we define the deterministic distribution
$\llangle (\zeta_q)_{q\in I}\rrangle
 \equiv
 \llangle \zeta_1,\ldots,\zeta_n\rrangle
 \in 
 \sS'(\bM^n\times\bM^{m_1+\ldots+m_n})$ by the equality
\begin{equation}
 \langle\llangle \zeta_1,\ldots,\zeta_n\rrangle,\psi_1\otimes\ldots\otimes\psi_n\otimes\varphi_1\otimes\ldots\otimes\varphi_n\rangle
 :=
 \llangle
 \langle\zeta_1,\psi_1\otimes\varphi_1\rangle,\ldots,\langle\zeta_n,\psi_n\otimes\varphi_n\rangle\rrangle,
\end{equation}
where $\psi_q\in \sS(\bM)$, $\varphi_q\in \sS(\bM^{m_q})$, $q\in I$, are arbitrary. 
\end{dfn}

\begin{dfn}\label{dfn:cumulants_eff_force}
A list $(i,m,a,s,r)$, where $i,m\in\bN_0$, $a\in\frM^m$ and $s\in\{0,1\}$, $r\in\{0,1,2\}$ is called an index. Let $n\in\bN_+$ and 
\begin{equation}\label{eq:list_indices}
 \vI\equiv ((i_1,m_1,a_1,s_1,r_1),\ldots,(i_n,m_n,a_n,s_n,r_n))
\end{equation}
be a list of indices. We set $\rn(\vI):=n$, $\rri(\vI):=i_1+\ldots+i_n$, $\mathsf{m}(\vI):=(m_1,\ldots,m_n)$, $\rrm(\vI):=m_1+\ldots+m_n$, $\ra(\vI):=|a_1|+\ldots+|a_n|$, $\rs(\vI):=s_1+\ldots+s_n$ and $\rr(\vI):=r_1+\ldots+r_n$. We use the following notation for the joint cumulants of the effective force coefficients
\begin{equation}
 E^\vI_{\uv,\sIR}:=
 \llangle \partial_\sIR^{s_1}\partial_\uv^{r_1} F^{i_1,m_1,a_1}_{\uv,\sIR} ,\ldots,
 \partial_\sIR^{s_n}\partial_\uv^{r_n} F^{i_n,m_n,a_n}_{\uv,\sIR}\rrangle\in \sS'(\bM^{\rn(\vI)}\times\bM^{\rrm(\vI)}).
\end{equation}
\end{dfn}

\begin{dfn}\label{dfn:varrho_I}
For $\varepsilon\geq0$ and a list of indices $\vI$ of the form~\eqref{eq:list_indices} we define 
\begin{equation}
 \varrho_\varepsilon(\vI) := \varrho_\varepsilon(i_1,m_1)+|a_1| +\ldots +\varrho_\varepsilon(i_n,m_n)+|a_n|\in\bR.
\end{equation}
We also set $\varrho(\vI):=\varrho_0(\vI)$. The cumulants $E^\vI_{\uv,\sIR}$ such that $\varrho(\vI)+(n-1)\rdim\leq 0$ are called relevant. The remaining cumulants are called irrelevant.
\end{dfn}

\begin{rem}
If $\rri(\vI)=0$, then either $E^\vI_{\uv,\sIR}=0$ or $\rn(\vI)=2$, $\rrm(\vI)=0$, $\ra(\vI)=0$, $\rs(\vI)=0$, $\rr(\vI)=0$. In the latter case $E^\vI_{\uv,\sIR}$ is relevant and coincides with the covariance of the white noise. For $\ri(\vI)>0$ the only relevant cumulants are the expectations of the relevant force coefficients.
\end{rem}

\begin{rem}\label{rem:rho2}
For $\varepsilon>0$ and any list of indices $\vI$ such that $\rrm(\vI)\leq 3\rri(\vI)$ it holds that $\varrho_\varepsilon(\vI)<\varrho(\vI)$. Moreover, \mbox{$\varrho_\varepsilon(\vI)+(\rn(\vI)-1)\rdim>0$} for $\varepsilon\in(0,\varepsilon_\diamond)$ and lists of indices $\vI$ such that $\varrho(\vI)+(\rn(\vI)-1)\rdim>0$, where $\varepsilon_\diamond$ was introduced in Remark~\ref{rem:rho}. Recall that $\varepsilon\in(0,\varepsilon_\diamond/3)$ is fixed (see Remark~\ref{rem:rho}).
\end{rem}

\section{Probabilistic analysis}\label{sec:probabilistic}

\begin{thm}\label{thm:probabilistic_bounds}
Fix $n\in2\bN_+$ such that $\rdim/n<\varepsilon$ and $i,m\in\bN_0$, $a\in\frM^m$ such that $\varrho(i,m)+|a|\leq0$. For $s\in\{0,1\}$, $r\in\{0,1,2\}$ we define the list of indices $\vI=\vI(s,r)=((i,m,a,s,r),\ldots,(i,m,a,s,r))$, $\rn(\vI)=n$. Assume that there exists $\oo\in\bN_0$ such that for $s\in\{0,1\}$, $r\in\{0,1,2\}$ the bound
\begin{multline}\label{eq:probabilistic_thm_assumption}
 \int_{\bT^{n-1}} \int_{\bM^{nm}}
 |(K^{\ast\oo,\otimes(n+nm)}_\sIR\ast E^\vI_{\uv,\sIR})(x_1,\ldots,x_n;\ry_1,\ldots,\ry_n)|\, \rd \ry_1\ldots \rd \ry_n\rd x_2\ldots\rd x_n 
 \\
 \lesssim [\uv]^{(\varepsilon-\sigma)\rr(\vI)}
 [\sIR]^{\varrho_\varepsilon(\vI)-\sigma\rs(\vI)+(n-1)\rdim}
\end{multline}
holds uniformly in $x_1\in\bM$, $\uv,\sIR\in(0,1/2]$. Then there exists $\oo\in\bN_0$ and a random variable $R>0$ such that $\bE R^n<\infty$ and for $r\in\{0,1\}$ the bound
\begin{equation}
 \|
 K_\sIR^{\ast\oo}
 \ast \partial_\uv^r f^{i,m,a}_{\uv,\sIR}\|_{\cV} \leq R\,[\uv]^{(\varepsilon-\sigma)r}\,[\sIR]^{\varrho_{3\varepsilon}(i,m)+|a|}.
\end{equation}
holds for all $\uv,\sIR\in(0,1/2]$.
\end{thm}
\begin{rem}
The function
\begin{equation}
 \int_{\bM^{nm}}
 |(K^{\ast\oo,\otimes(n+nm)}_\sIR\ast E^\vI_{\uv,\sIR})(x_1,\ldots,x_n;\ry_1,\ldots,\ry_n)|\, \rd \ry_1\ldots \rd \ry_n
\end{equation}
is $2\pi$ periodic in all variables and the first integral on the LHS of the bound~\eqref{eq:probabilistic_thm_assumption} is an integral over one period.
\end{rem}

\begin{rem}
The assumption of the above theorem is verified in Theorem~\ref{thm:cumulants}. 
\end{rem}

\begin{proof}
By Remark~\ref{rem:periodic} and Lemma~\ref{lem:kernel_simple_fact} (B), which says that \mbox{$\|\fT K^{\ast\rdim}_\sIR\|_{\cV}\lesssim [\sIR]^{-\rdim}$}, the assumption of the theorem implies that for $\ooo=\oo+\rdim$ the bound
\begin{multline}\label{eq:probabilistic_thm_bound}
 \int_{\bM^{nm}}
 |(K^{\ast\ooo,\otimes(n+nm)}_\sIR\ast E^\vI_{\uv,\sIR})(x_1,\ldots,x_n;\ry_1,\ldots,\ry_n)|\, \rd \ry_1\ldots \rd \ry_n
 \\
 \lesssim  [\uv]^{(\varepsilon-\sigma)\rr(\vI)}
 [\sIR]^{\varrho_\varepsilon(\vI)-\sigma\rs(\vI)}
\end{multline}
holds uniformly in $x_1,\ldots,x_n\in\bM$, $\uv,\sIR\in(0,1/2]$. Using the above bound with $x_1=\ldots=x_n=x$, the relation between the expectation of a product of random variables and their joint cumulants given by Eq.~\eqref{eq:expectation_cumulants}, the equalities $\varrho_\varepsilon(\vI)=n(\varrho_\varepsilon(i,m)+|a|)$, $\rs(\vI)=n s$, $\rr(\vI)=nr$ and 
\begin{equation}
 (K^{\ast\ooo}_\sIR \ast \partial_\sIR^s\partial_\uv^r f^{i,m,a}_{\uv,\sIR})(x)=\int_{\bM^m}\!(K^{\ast\ooo,\otimes(1+m)}_\sIR \ast \partial_\sIR^s\partial_\uv^r F^{i,m,a}_{\uv,\sIR})(x;y_1,\ldots,y_m)\,\rd y_1\ldots\rd y_m
\end{equation}
we obtain for $r\in\{0,1,2\}$, $s\in\{0,1\}$ the bound
\begin{equation}
 \sup_{x\in\bM}|\bE (K^{\ast\ooo}_\sIR \ast \partial_\sIR^s\partial_\uv^r f^{i,m,a}_{\uv,\sIR}(x))^n|
 \lesssim [\uv]^{n(\varepsilon-\sigma)r}\,[\sIR]^{n(\varrho_\varepsilon(i,m)+|a|-\sigma s)}
\end{equation}
uniform in $x\in\bM$, $\uv,\sIR\in(0,1/2]$. Lemma~\ref{lem:expectation_sup} and Lemma~\ref{lem:kernel_simple_fact} (C) imply
\begin{equation}
 \bE \|\partial_\sIR^s\partial_\uv^r (K^{\ast\oooo}_\sIR \ast f^{i,m,a}_{\uv,\sIR})\|^n_{\cV}
 \lesssim [\uv]^{n(\varepsilon-\sigma)r}\,[\sIR]^{n(\varrho_\varepsilon(i,m)+|a|-\sigma s-\varepsilon)}
\end{equation}
with $\oooo=\ooo+\rdim$. The theorem follows now from Lemma~\ref{lem:probabilistic_estimate} applied with
\begin{equation}
 \zeta_{2\uv,2\sIR}= K^{\ast\oooo}_\sIR \ast \partial_\uv^r f^{i,m,a}_{\uv,\sIR},\qquad r\in\{0,1\},
\end{equation}
and $\omega=(\varepsilon-\sigma)r$ and $\rho=\varrho_\varepsilon(i,m)+|a|-2\varepsilon \geq \varrho_{3\varepsilon}(i,m)+|a|$, where the last inequality holds for $m\leq3i$ (otherwise $f^{i,m,a}_{\uv,\sIR}$ vanishes identically).
\end{proof}

\begin{rem}
The advantage of the bound of the form~\eqref{eq:probabilistic_thm_assumption} over~\eqref{eq:probabilistic_thm_bound} is that the former bound contains the extra factor $[\sIR]^{(n-1)\rdim}$ on the RHS, and consequently can be more easily proved by induction using the flow equation for cumulants stated in Sec.~\ref{sec:flow_equation_cumulants} and the equality
$E^\vI_{\uv,\sIR}=E^\vI_{\uv,0} + \int_0^\sIR \partial_\uIR E^{\vI}_{\uv,\uIR}\,\rd\uIR$.
\end{rem}

\begin{lem}\label{lem:expectation_sup}
Let $n\in2\bN_+$. There exists a constant $C>0$ such that for all random fields $\zeta\in L^n(\bT)$ and $\sIR\in(0,1/2]$ it holds that
\begin{equation}
 \bE
 \|K_\sIR^{\ast\rdim} \ast \zeta\|^n_{L^\infty(\bT)}
 \leq C\, [\sIR]^{-\rdim}\,
 \bE \|\zeta\|^n_{L^n(\bT)}.
\end{equation}
\end{lem}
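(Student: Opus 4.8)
The plan is to deduce the estimate from a deterministic (realisation-by-realisation) inequality, after which taking the $n$-th power and the expectation is immediate. Concretely, I would first reduce to proving that
\[
 \|K_\sIR^{\ast\rdim}\ast\zeta\|_{L^\infty(\bT)}\le C_0\,[\sIR]^{-\rdim/n}\,\|\zeta\|_{L^n(\bT)}
\]
for a fixed deterministic $\zeta\in L^n(\bT)$ and $\sIR\in(0,1/2]$, with $C_0$ independent of $\zeta$ and of $\sIR$. Raising this to the power $n$ and integrating over the probability space then yields the lemma with $C=C_0^n$ (and if $\bE\|\zeta\|_{L^n(\bT)}^n=\infty$ there is nothing to prove).

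To prove the displayed deterministic bound I would identify functions on $\bT$ with $2\pi$-periodic functions on $\bM$ and use Remark~\ref{rem:periodic} to rewrite $K_\sIR^{\ast\rdim}\ast\zeta=\fT(K_\sIR^{\ast\rdim})\star\zeta$, where $\star$ is the convolution on $\bT$ and $\fT$ is the periodization map of Def.~\ref{dfn:periodization}. Letting $n':=n/(n-1)$ be the conjugate exponent of $n$, Hölder's inequality gives, for every $x$, $|(K_\sIR^{\ast\rdim}\ast\zeta)(x)|\le\|\fT K_\sIR^{\ast\rdim}\|_{L^{n'}(\bT)}\,\|\zeta\|_{L^n(\bT)}$. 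The only quantitative input is Lemma~\ref{lem:kernel_simple_fact}~(B), which yields $\|\fT K_\sIR^{\ast\rdim}\|_{L^{n'}(\bT)}\lesssim[\sIR]^{-\rdim(n'-1)/n'}$ uniformly in $\sIR\in(0,1/2]$; since $(n'-1)/n'=1/n$, this is precisely the factor $[\sIR]^{-\rdim/n}$ that is needed. Taking the supremum over $x$ completes this step, and $C_0$ is the implicit constant of Lemma~\ref{lem:kernel_simple_fact}~(B).

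I do not expect any real obstacle: the argument is essentially a one-line consequence of Hölder's inequality together with the previously established Lemma~\ref{lem:kernel_simple_fact}~(B). The two minor points I would spell out are, first, that $\|K_\sIR^{\ast\rdim}\ast\zeta\|_{L^\infty(\bT)}$ is genuinely a random variable — this holds because $\fT K_\sIR^{\ast\rdim}\in L^{n'}(\bT)$ and $\zeta\in L^n(\bT)$ with $1/n'+1/n=1$ and $1<n',n<\infty$ (as $n\ge2$), so $K_\sIR^{\ast\rdim}\ast\zeta$ is continuous on $\bT$ by Young's inequality; and second, the elementary identity $(n'-1)/n'=1/n$, which is exactly what makes the power of $[\sIR]$ match the one claimed. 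The hypothesis $n\in2\bN_+$ plays no role in this lemma beyond ensuring $n\ge2$, i.e. $n'<\infty$; its evenness matters only for the applications.
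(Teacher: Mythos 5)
Your proof is correct and is essentially the paper's own argument: rewrite $K_\sIR^{\ast\rdim}\ast\zeta$ as $\fT K_\sIR^{\ast\rdim}\star\zeta$ via Remark~\ref{rem:periodic}, apply Young/H\"older with conjugate exponents $(n',n)$ to land in $L^\infty$, and invoke Lemma~\ref{lem:kernel_simple_fact}~(B) at exponent $n'=n/(n-1)$ to obtain the factor $[\sIR]^{-\rdim/n}$, which gives $[\sIR]^{-\rdim}$ after raising to the $n$-th power and taking expectations. The only cosmetic difference is that you phrase the pointwise estimate as H\"older while the paper cites Young's inequality; for the $L^\infty$ target these coincide, and your extra remarks on measurability and on $(n'-1)/n'=1/n$ simply make explicit what the paper leaves implicit.
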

\begin{proof}
Note that $K_\sIR^{\ast\rdim} \ast \zeta = \fT K_\sIR^{\ast\rdim} \star \zeta$, where $\star$ is the convolution in $\bT$ and $\fT K_\sIR^{\ast\rdim}$ is the periodization of $K_\sIR^{\ast\rdim}$ (see Def.~\ref{dfn:periodization}). Using the Young inequality for convolution we obtain
\begin{equation}
 \bE\|K_\sIR^{\ast\rdim}\ast \zeta\|^n_{L^\infty(\bT)}
 \leq
 \|\fT K_\sIR^{\ast\rdim}\|_{L^{n/(n-1)}(\bT)}^n\,
 \bE\|\zeta\|^n_{L^n(\bT)}.
\end{equation} 
The lemma follows now from Lemma~\ref{lem:kernel_simple_fact}~(B).
\end{proof}

\begin{lem}\label{lem:probabilistic_estimate}
Fix $n\in\bN_+$. There exists a universal constant $c>0$ such that if 
\begin{equation}
 \bE \|\partial_\sIR^s\partial_\uv^{u} \zeta_{\uv,\sIR}^{\phantom{l}}\|^n_{L^\infty(\bT)}
 \\
 \leq C\,[\uv]^{n(\omega - \sigma u+\varepsilon u)} [\sIR]^{n(\rho-\sigma s+\varepsilon s)}
\end{equation}
for some differentiable random function $\zeta:(0,1]^2\to L^\infty(\bT)$, some $C>0$, $\omega,\rho\leq0$ and all $u,s\in\{0,1\}$ and $\uv,\sIR\in(0,1]$, then
\begin{equation}
 \bE\Big( \sup_{\uv,\sIR\in(0,1]}
 [\uv]^{-n\omega}[\sIR]^{-n\rho}\,\|\zeta_{\uv,\sIR}\|^n_{L^\infty(\bT)}\Big) \leq c\,C.
\end{equation}
\end{lem}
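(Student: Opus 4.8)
The plan is to convert the hypothesized pointwise-in-$(\uv,\sIR)$ moment bounds into a single moment bound on the supremum over the whole parameter square by a Kolmogorov/chaining argument adapted to the two parameters $\uv$ and $\sIR$, exploiting that we also control the moments of the first-order derivatives $\partial_\uv\zeta$ and $\partial_\sIR\zeta$. Concretely, set $\eta_{\uv,\sIR}:=[\uv]^{\omega}[\sIR]^{\rho}\,\zeta_{\uv,\sIR}$ and compute, using the product rule and the three assumed bounds (for $(u,s)=(0,0),(1,0),(0,1)$), that $\bE\|\eta_{\uv,\sIR}\|_{L^\infty(\bT)}^n\lesssim C$ and $\bE\|\,[\uv]\partial_\uv\eta_{\uv,\sIR}\|_{L^\infty}^n\lesssim C$, $\bE\|\,[\sIR]\partial_\sIR\eta_{\uv,\sIR}\|_{L^\infty}^n\lesssim C$, uniformly over $(\uv,\sIR)\in(0,1]^2$. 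The factors $[\uv]\partial_\uv$ and $[\sIR]\partial_\sIR$ are the scale-invariant derivatives, which is the natural object here because the problem is scale invariant in $\uv$ and $\sIR$ on a logarithmic scale.

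The second step is to pass to logarithmic coordinates $t:=\log(1/[\uv])\in[0,\infty)$, $\tau:=\log(1/[\sIR])\in[0,\infty)$, so that $[\uv]\partial_\uv=-\sigma^{-1}\partial_t$ and similarly for $\sIR$; in these coordinates $\eta$ becomes a random field $\tilde\eta$ on the quadrant with $\bE\|\tilde\eta\|_{L^\infty}^n$ and $\bE\|\partial_t\tilde\eta\|_{L^\infty}^n$, $\bE\|\partial_\tau\tilde\eta\|_{L^\infty}^n$ all bounded by $\lesssim C$, uniformly in $(t,\tau)$. Then $\sup_{\uv,\sIR}[\uv]^{n\omega}[\sIR]^{n\rho}\|\zeta_{\uv,\sIR}\|_{L^\infty}^n=\sup_{t,\tau\geq0}\|\tilde\eta_{t,\tau}\|_{L^\infty}^n$, and it remains to bound the expectation of this supremum. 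I would tile the quadrant by unit squares $Q_{j,k}=[j,j+1]\times[k,k+1]$, $j,k\in\bN_0$; on each $Q_{j,k}$ the fundamental theorem of calculus along a path from the center gives $\sup_{Q_{j,k}}\|\tilde\eta\|_{L^\infty}\leq \|\tilde\eta_{j+1/2,k+1/2}\|_{L^\infty}+\int_{Q_{j,k}}(\|\partial_t\tilde\eta\|_{L^\infty}+\|\partial_\tau\tilde\eta\|_{L^\infty})$, hence, by H\"older in the $(t,\tau)$ integral and then taking expectations, $\bE\sup_{Q_{j,k}}\|\tilde\eta\|_{L^\infty}^n\lesssim C$ with a constant independent of $(j,k)$. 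Summing a geometric-type tail is not available directly, so one instead uses that $\omega,\rho\le 0$: actually the cleanest route is to observe that only finitely many tiles matter after a change of variable, or — since $\omega\leq 0$ and $\rho\leq0$ — that $[\uv]^{\omega},[\sIR]^{\rho}$ are bounded on $(0,1]$ and one may simply take the supremum over $(\uv,\sIR)\in(0,1]^2$ directly in the original variables, covering $(0,1]^2$ by the dyadic boxes $[\uv,2\uv]\times[\sIR,2\sIR]$ and summing the (finitely effective) contributions with the help of the scale-invariant derivative estimates, which produce a bound uniform in the box.

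The main obstacle is the summability of the contributions of the infinitely many boxes/tiles near the corner $(\uv,\sIR)=(0,0)$: a naive union bound over a countable cover diverges, so the argument must be arranged so that the normalization $[\uv]^{\omega}[\sIR]^{\rho}$ (with $\omega,\rho\leq 0$) together with the derivative bounds yields a genuinely summable estimate — concretely, choosing the box sizes to shrink geometrically while the derivative factor $[\uv]\partial_\uv$ compensates exactly, so that $\bE\sup_{\text{box}}\|\eta\|^n_{L^\infty}$ over a box of log-size $2^{-\ell}$ is bounded by $\lesssim C\,2^{-\ell}$ (or any summable sequence), after which $\sum_\ell 2^{-\ell}\lesssim 1$ closes the estimate with a universal constant $c$. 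Everything else — the product rule to get the derivative-of-$\eta$ bounds, the change to logarithmic variables, H\"older on the path integral — is routine.
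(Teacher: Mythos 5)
Your proposal starts from the right raw ingredients (FTC in $(\uv,\sIR)$ plus moment bounds on the mixed derivatives), but the tiling/union-bound implementation has a real gap that you yourself flag and do not close, and two auxiliary claims are simply false.

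First the auxiliary errors. Since $[\uv]=\uv^{1/\sigma}$, the scale-invariant derivative is $\uv\partial_\uv$ (which is $\partial_{\log\uv}$), not $[\uv]\partial_\uv$; with $[\uv]\partial_\uv$ and, say, $\sigma>1$ the quantity $\bE\|[\uv]^{1+\omega}[\sIR]^{\rho}\partial_\uv\zeta\|_{L^\infty}^n$ grows like $[\uv]^{n(1-\sigma+\varepsilon)}\to\infty$ as $\uv\to0$, so the claimed uniform moment bound on $[\uv]\partial_\uv\eta$ is wrong. Second, for $\omega\leq 0$ the factor $[\uv]^{\omega}=\uv^{\omega/\sigma}$ is \emph{not} bounded on $(0,1]$ — it diverges as $\uv\to0$ (that is the whole point of the normalization). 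So the route ``$[\uv]^\omega,[\sIR]^\rho$ are bounded, hence take the sup directly'' does not exist.

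The real gap is the one you call the main obstacle. After passing to log coordinates the quadrant is unbounded, the normalized field $\eta=[\uv]^\omega[\sIR]^\rho\zeta$ has $n$-th moments merely bounded uniformly, and on \emph{every} unit tile $Q_{j,k}$ the estimate $\bE\sup_{Q_{j,k}}\|\eta\|^n\lesssim C$ contains the non-decaying contribution of the reference value $\|\eta_{\text{center}}\|$. A union bound over the infinitely many tiles therefore genuinely diverges; the $\varepsilon$-gain improves only the derivative terms, not the zeroth-order one, and ``choosing box sizes to shrink geometrically'' does not create summability in the $\eta_{\text{center}}$ contribution. What you would need is to never re-introduce the reference value: write a single telescoping chain from the fixed corner $(1,1)$ so that $\|\eta_{1,1}\|$ appears exactly once, and all other terms are derivative increments that do carry the $\varepsilon$-decay.

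That is precisely the paper's proof, and it skips the tiling entirely. Starting from the pointwise FTC identity along a path from $(1,1)$ to $(\uv,\sIR)$, and using $\omega,\rho\leq0$ to dominate $[\uv]^\omega\leq[\nu]^\omega$ and $[\sIR]^\rho\leq[\uIR]^\rho$ inside the integrals (the integration variables $\nu\geq\uv$, $\uIR\geq\sIR$), one gets
\begin{equation}
 [\uv]^{\omega}[\sIR]^{\rho}\|\zeta_{\uv,\sIR}\|_{L^\infty(\bT)}
 \leq
 \|\zeta_{1,1}\|_{L^\infty(\bT)}
 +\int_\sIR^1[\uIR]^\rho\|\partial_\uIR\zeta_{1,\uIR}\|_{L^\infty(\bT)}\,\rd\uIR
 +\int_\sIR^1\!\!\int_\uv^1[\nu]^\omega[\uIR]^\rho\|\partial_\uIR\partial_\nu\zeta_{\nu,\uIR}\|_{L^\infty(\bT)}\,\rd\nu\rd\uIR,
\end{equation}
with the right-hand side manifestly increasing as $\uv,\sIR\searrow0$, so the supremum is obtained by extending the integrals to $(0,1]$. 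Taking $\bE(\cdot)^{1/n}$, Minkowski's integral inequality pulls the $L^n(\Omega)$-norm inside, and the assumed bounds together with the $\varepsilon$-gain make the resulting $\rd\nu\rd\uIR$ integrals of $[\nu]^{\varepsilon-\sigma}[\uIR]^{\varepsilon-\sigma}$ finite. This is both shorter and avoids the summability issue: there is one chain, one reference value, and the integrals converge precisely because of the $\varepsilon$ in the hypothesis — which is the piece of structure your logarithmic reformulation set aside.
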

\begin{proof} 
For all $\uv,\sIR\in(0,1]$ we have
\begin{multline}
 [\uv]^{-\omega} [\sIR]^{-\rho}\,\|\zeta_{\uv,\sIR}\|_{L^\infty(\bT)}
 \leq
 \|\zeta_{1,1}\|_{L^\infty(\bT)}
 \\
 +
 \int_\sIR^1 [\uIR]^{-\rho}\,\|\partial_\uIR\zeta_{1,\uIR}\|_{L^\infty(\bT)}\,\rd\uIR
 +
 \int_\sIR^1 \int_\uv^1 [\nu]^{-\omega}[\uIR]^{-\rho}\,\|\partial_\uIR\partial_\nu \zeta_{\nu,\uIR}\|_{L^\infty(\bT)} \,\rd\nu\rd\uIR
\end{multline}
By the Minkowski inequality we get the bound
\begin{multline}\label{eq:probabilistic_proof_ieq}
 \bE\Big( \sup_{\uv,\sIR\in(0,1]}
 [\uv]^{-n\omega}[\sIR]^{-n\rho}\,\|\zeta_{\uv,\sIR}\|^n_{L^\infty(\bT)}\Big)^{\!\frac{1}{n}}
 \leq
 \bE\Big(\|\zeta_{1,1}\|^n_{L^\infty(\bT)}\Big)^{\!\frac{1}{n}}
 \\
 +
 \int_0^1 [\sIR]^{-\rho} \,\bE\Big( \|\partial_\sIR \zeta_{1,\sIR}\|^n_{L^\infty(\bT)}\Big)^{\!\frac{1}{n}}\rd\sIR
 +
 \int_0^1\int_0^1 [\uv]^{-\omega}[\sIR]^{-\rho}\, \bE\Big(\|\partial_\sIR\partial_\uv \zeta_{\uv,\sIR}\|^n_{L^\infty(\bT)}\Big)^{\!\frac{1}{n}}\rd\uv\rd\sIR,
\end{multline}
which implies the statement.
\end{proof}

\section{Function spaces for cumulants}\label{sec:topology_cumulants}

\begin{dfn}
For $n\in\bN_+$ we say that $V\in C(\bM^n)$ is translationally invariant iff
$V(x_1,\ldots,x_n)=V(x_1+x,\ldots,x_n+x)$ for all $x_1,\ldots,x_n,x\in\bM$.
\end{dfn}

\begin{dfn}\label{dfn:sVm}
Let $n\in\bN_+$, \mbox{$\mathsf{m}=(m_1,\ldots,m_n)\in\bN_0^n$} and $m=m_1+\ldots+m_n$. The vector space $\cV^{\mathsf{m}}_\rt$ consists of translationally invariant $V\in C(\bM^n\times\bM^m)$ such that
\begin{equation}
 \|V\|_{\cV^{\mathsf{m}}}
 :=
 \sup_{x_1\in\bM} \int_{\bT^{n-1}\times\bM^m}
 |V(x_1,\ldots,x_n;y_1,\ldots,y_m)|\,\rd x_2\ldots\rd x_n\,\rd y_1\ldots\rd y_m
\end{equation}
is finite and the function $\fU^{\mathsf{m}} V\in C(\bM^n\times\bM^m)$ defined by
\begin{equation}
 \fU^{\mathsf{m}} V(x_1,\ldots,x_n;y_1,\ldots,y_m):=V(x_1,\ldots,x_n;\mathrm y_1+x_1,\ldots,\mathrm y_n+x_n)
\end{equation} 
is $2\pi$ periodic in variables $x_1,\ldots,x_n$ for every
\begin{equation}
 (y_1,\ldots,y_m)=(\mathrm y_1,\ldots,\mathrm y_n)\in\bM^{m_1}\times\ldots\times\bM^{m_n}=\bM^m,
\end{equation}
where for arbitrary $x\in\bM$ and $\ry=(y_1,\ldots,y_m)\in\bM^m$, $m\in\bN_0$, we use the following notation $\ry+x:=(y_1+x,\ldots,y_n+x)\in\bM^m$. For $\oo\in\bN_0$ the space $\cD_\rt^{\mathsf{m};\oo}$ consists of distributions $V\in \sS'(\bM^n\times\bM^m)$ such that $K^{\ast\oo,\otimes (n+m)}_\sIR\ast V\in\cV^{\mathsf{m}}$. The space $\cD^{\mathsf{m}}_\rt$ is the union of the spaces $\cD_\rt^{\mathsf{m};\oo}$, $\oo\in\bN_0$.
\end{dfn}

\begin{rem}
For $n=1$ and $\mathsf{m}=m$ the norm $\|\Cdot\|_{\cV^{\mathsf{m}}}$ coincides with the norm $\|\Cdot\|_{\cV^m}$ introduced in Def.~\ref{dfn:sV}.
\end{rem}

\begin{rem}\label{rem:translational_permutation}
Using translational invariance of $V\in\cV^{\mathsf{m}}_\rt$ one shows that
\begin{equation}
 \|V\|_{\cV^{\mathsf{m}}}
 =
 \frac{1}{(2\pi)^{\rdim}}\int_{\bT^n\times\bM^m}
 |V(x_1,\ldots,x_n;y_1,\ldots,y_m)|\,\rd x_1\ldots\rd x_n \rd y_1\ldots\rd y_m.
\end{equation} 
\end{rem}

\begin{rem}\label{rem:Vm_K2}
For $V\in\cV^{\mathsf{m}}$ and $K\in\cK^{n+m}$ it holds $\|K\ast V\|_{\cV^{\mathsf{m}}}\leq \|K\|_{\cK^{n+m}} \|V\|_{\cV^{\mathsf{m}}}$.
\end{rem}

\begin{dfn}\label{dfn:map_Ym}
For $V\in\cD^{\mathsf{m}}_\rt$ and a permutation $\pi\in\cP_{m_1}$ we define $\fY_\pi V\in\cD^{\mathsf{m}}_\rt$ by
\begin{equation}
 \big\langle \fY_\pi V,\botimes_{q=1}^n \psi_q\otimes
 \botimes_{q=1}^m \varphi_q\big\rangle
 :=
 \big\langle V,\botimes_{q=1}^n \psi_q\otimes
 \botimes_{q=1}^{m_1} \varphi_{\pi(q)}\otimes
 \botimes_{q=m_1+1}^{m} \varphi_q \big\rangle,
\end{equation}
where $\psi_1,\ldots,\psi_n,\varphi_1,\ldots,\varphi_m\in\ \sS(\bM)$. For $V\in\cD^{\mathsf{m}}_\rt$ and $\omega\in\cP_{n}$ we define $\fY^\omega V\in\cD^{\omega(\mathsf{m})}_\rt$, where $\omega(\mathsf{m}):=(m_{\omega(1)},\ldots,m_{\omega(n)})$, by 
\begin{equation}
 \big\langle \fY^\omega V,\botimes_{q=1}^n \psi_q\otimes
 \botimes_{q=1}^n \varphi_q\big\rangle
 :=
 \big\langle V,\botimes_{q=1}^n \psi_{\omega(q)}\otimes
 \botimes_{q=1}^{n} \varphi_{\omega(q)}\big\rangle,
\end{equation}
where $\psi_q\in\ \sS(\bM)$, $\varphi_q\in \sS(\bM^{m_q})$ for $q\in\{1,\ldots,n\}$.
\end{dfn}
\begin{rem}
The maps $\fY_\pi:\cV^{\mathsf{m}}_\rt\to\cV^{\mathsf{m}}_\rt$, $\fY^\omega:\cV^{\mathsf{m}}_\rt\to\cV^{\omega(\mathsf{m})}_\rt$ are bounded with norm one.
\end{rem}

\begin{dfn}\label{dfn:maps_A_B}
Fix $n\in\bN_+$, $\hat n\in\{1,\ldots,n\}$, $m_1,\ldots,m_{n+1}\in\bN_0$. Let
\begin{equation}
\begin{gathered}
 \mathsf{m}=(m_1+m_{n+1},m_2,\ldots,m_n)\in\bN_0^n,
 \qquad
 \tilde{\mathsf{m}}=(1+m_1,m_2,\ldots,m_{n+1})\in\bN_0^{n+1},
 \\
 \hat{\mathsf{m}}=(1+m_1,m_2,\ldots,m_{\hat n})\in\bN_0^{\hat n},
 \qquad
 \check{\mathsf{m}}=(m_{\hat n+1},\ldots,m_{n+1})\in\bN_0^{n-\hat n+1}.
\end{gathered} 
\end{equation} 
The bilinear map $\fA\,:\,\sS(\bM) \times\cV^{\tilde{\mathsf{m}}}_\rt\to\cV^{\mathsf{m}}_\rt$ is defined by
\begin{multline}\label{eq:fA_dfn}
 \fA(G,V)(x_1,\ldots,x_n;\ry_1,\ry_{n+1},\ry_2,\ldots,\ry_n)
 \\
 :=
 \int_{\bM^2} V(x_1,\ldots,x_{n+1}; y,\ry_1,\ldots,\ry_{n+1})\,G(y-x_{n+1})\,\rd y\rd x_{n+1}.
\end{multline}
The trilinear map $\fB\,:\,\sS(\bM)\times \cV^{\hat{\mathsf{m}}}_\rt\times\cV^{\check{\mathsf{m}}}_\rt\to\cV^{\mathsf{m}}_\rt$ is defined by 
\begin{multline}\label{eq:fB_dfn}
 \fB(G,W,U)(x_1,\ldots,x_{n};\ry_1,\ry_{n+1},\ry_2,\ldots,\ry_{n})
 :=
 \int_{\bM^2} W(x_1,\ldots,x_{\hat n};y,\ry_1,\ldots,\ry_{\hat n})
 \\
 \times\,G(y-x_{n+1})\, 
 U(x_{n+1},x_{\hat n+1},\ldots,x_{n};\ry_{n+1},\ry_{\hat n+1},\ldots,\ry_{n})\,\rd y\rd x_{n+1}.
\end{multline}
In the above equations $\ry_j\in\bM^{m_j}$, $j\in\{1,\ldots,n+1\}$.
\end{dfn}

\begin{rem}
The map $\fB$ is a generalization of the map $\fB$ introduced in Def.~\ref{dfn:map_B}. The maps $\fA$ and $\fB$ appear on the RHS of the flow equation for the cumulants of the  effective force coefficients introduced in Sec.~\ref{sec:flow_equation_cumulants}. 
\end{rem}

\begin{lem}\label{lem:fA_fB_bounds}
The maps $\fA:\sS(\bM) \times\cV_\rt^{\tilde{\mathsf{m}}}\to\cV_\rt^{\mathsf{m}}$,
\mbox{$\fB:\sS(\bM)\times \cV_\rt^{\hat{\mathsf{m}}}\times\cV_\rt^{\check{\mathsf{m}}}\to\cV_\rt^{\mathsf{m}}$} are well defined. It holds that
\begin{equation}\label{eq:fA_ieq}
 \|\fA(G,V)\|_{\cV^{\mathsf{m}}}
 \leq
 \|\fT |G|\|_{\cV}\,
 \|V\|_{\cV^{\tilde{\mathsf{m}}}},
\end{equation}
\begin{equation}\label{eq:fB_ieq}
 \|\fB(G,W,U)\|_{\cV^{\mathsf{m}}}
 \leq
 \|G\|_\cK\,
 \|W\|_{\cV^{\check{\mathsf{m}}}}\, 
 \|U\|_{\cV^{\hat{\mathsf{m}}}},
\end{equation}
where $\fT|G|$ is the periodization of $|G|$ introduced in Def.~\ref{dfn:periodization}, $\|G\|_\cK=\|G\|_{L^1(\bM)}$ and $\|\fT |G|\|_\cV=\|\fT |G|\|_{L^\infty(\bT)}$.
\end{lem}
\begin{proof}
The functions $\fU^{\mathsf{m}}\fA(G,V)$, $\fU^{\mathsf{m}}\fB(G,W,U)$ are translationally invariant and $2\pi$ periodic since
\begin{multline}
 \fU^{\mathsf{m}}\fA(G,V)(x_1,\ldots,x_n;\ry_1,\ry_{n+1},\ry_2,\ldots,\ry_n)
 =
 \int_{\bM^2} G(y-x_{n+1})
 \\\times
 \fU^{\tilde{\mathsf{m}}} V(x_1,\ldots,x_n,x_{n+1}+x_1; y,\ry_1,\ldots,\ry_n,\ry_{n+1}-x_{n+1})\,\rd y\rd x_{n+1},
\end{multline}
\begin{multline}
 \fU^{\mathsf{m}}\fB(G,W,U)(x_1,\ldots,x_{n};\ry_1,\ry_{n+1},\ry_2,\ldots,\ry_{n})
 \\
 =
 \int_{\bM^2} \fU^{\hat{\mathsf{m}}} W(x_1,\ldots,x_{\hat n};y,\ry_1,\ldots,\ry_{\hat n})
 \,G(y-x_{n+1})\, \\
 \times
 \fU^{\check{\mathsf{m}}} U(x_{n+1}+x_1,x_{\hat n+1},\ldots,x_n;\ry_{n+1}-x_{n+1},\ry_{\hat n+1},\ldots,\ry_{n})\,\rd y\rd x_{n+1}.
\end{multline}
To prove the first bound note that
\begin{multline}
 \|\fA(G,V)\|_{\cV^{\mathsf{m}}}
 \leq 
 \sup_{x_1\in\bT}\int_{\bM^2\times\bT^{n-1}\times\bM^m} |G(y+x_1-x_{n+1})|
 \\
 \times |\fU^{\mathsf{m}}V(x_1,\ldots,x_{n+1}; y,\ry_1,\ldots,\ry_{n+1})|\,\rd y\rd x_{n+1}\rd x_2\ldots\rd x_n\rd\ry_1\ldots\rd\ry_{n+1}
\end{multline}
Using periodicity of $\fU^{\mathsf{m}}V$ we arrive at
\begin{multline}
 \|\fA(G,V)\|_{\cV^{\mathsf{m}}}\leq\sup_{x_1\in\bT}\int_{\bM\times\bT^{n}\times\bM^m} (\fT|G|)(y+x_1-x_{n+1})
 \\
 \times |\fU^{\mathsf{m}}V(x_1,\ldots,x_{n+1}; y,\ry_1,\ldots,\ry_{n+1})|\,\rd y\rd x_{n+1}\rd x_2\ldots\rd x_n\rd\ry_1\ldots\rd\ry_{n+1}
\end{multline}
This implies $\|\fA(G,V)\|_{\cV^{\mathsf{m}}}\leq \|\fT |G|\|_{\cV}\, \|\fU^{\mathsf{m}}V\|_{\cV^{\tilde{\mathsf{m}}}}= \|\fT |G|\|_{\cV}\, \|V\|_{\cV^{\tilde{\mathsf{m}}}}$. It holds that
\begin{multline}
 \|\fB(G,W,U)\|_{\cV^{\mathsf{m}}}
 \leq
 \sup_{x_1\in\bT}\int_{\bT^{n-1}\times\bM^{m+2}} |W(x_1,\ldots,x_{\hat n};y,\ry_1,\ldots,\ry_{\hat n})|
 \,|G(y-z)|\,
 \\\times
 |U(z,x_{\hat n+1},\ldots,x_{n};\ry_{n+1},\ry_{\hat n+1},\ldots,\ry_{n})|\,\rd x_2\ldots\rd x_{n}\rd z\rd y\rd\ry_1\ldots\rd\ry_{n+1}
 \\
 \leq 
 \sup_{x_1\in\bT}\int_{\bT^{\hat n-1}\times\bM^{\hat m}} |W(x_1,\ldots,x_{\hat n};y,\ry_1,\ldots,\ry_{\hat n})|
 \,\rd x_2\ldots\rd x_{\hat n}\rd y\rd\ry_1\ldots\rd\ry_{\hat n}
 \\\times
 \sup_{y\in\bM}\int_\bM |G(y-z)|\,\rd z
 ~\|U\|_{\cV^{\check{\mathsf{m}}}},
\end{multline}
where $\hat m:=1+m_1+\ldots+m_{\hat n}$. The second of the stated bounds follows from the above estimate.
\end{proof}

\begin{rem}\label{lem:fA_fB_Ks}
The fact that $\fP_\sIR^\oo K_\sIR^{\ast\oo}=\delta_\bM$ implies that for all $\sIR>0$ it holds that
\begin{equation}
\begin{gathered}
 K_\sIR^{\ast\oo,\otimes(n+m)}\ast\fA(G,V)=
 \fA\big(\fP^{2\oo}_\sIR G,
 K_\sIR^{\ast\oo,\otimes (n+m+2)}\ast V\big),
 \\
 K_\sIR^{\ast\oo,\otimes(n+m)}\ast\fB(G,W,U)=
 \fB\big(\fP^{2\oo}_\sIR G,
 K_\sIR^{\ast\oo,\otimes(\hat n+\hat m+1)}\ast W,
 K_\sIR^{\ast\oo,\otimes(n-\hat n+\check m+1)}\ast U\big),
\end{gathered} 
\end{equation}
where \mbox{$m=m_1+\ldots+m_{n+1}$}, \mbox{$\hat m=m_1+\ldots+m_{\hat n}$} and \mbox{$\check m=m_{\hat n+1}+\ldots+m_{n+1}$}. This allows to define $\fA(G,V)\in\cD^{\mathsf{m}}_\rt$ and $\fB(G,W,U)\in\cD^{\mathsf{m}}_\rt$ for all $G\in\sS(\bM)$, $V\in\cD^{\tilde{\mathsf{m}}}_\rt$, $W\in\cD^{\hat{\mathsf{m}}}_\rt$, $U\in\cD^{\check{\mathsf{m}}}_\rt$. For future reference note that by Lemma~\ref{lem:kernel_simple_fact}~(B)
\begin{equation}
 \|\fT |\fP^{2\oo}_\sIR G|\|_\cV\leq \|\fT K_\sIR^{\ast\rdim}\|_\cV \,\|\fP^{2\oo+\rdim}_\sIR G\|_\cK\lesssim [\sIR]^{-\rdim}\,\|\fP^{2\oo+\rdim}_\sIR G\|_\cK
\end{equation}
uniformly in $\sIR\in(0,1]$ since $K_\sIR^{\ast\rdim}=|K_\sIR^{\ast\rdim}|$. 
\end{rem}

\section{Flow equation for cumulants}\label{sec:flow_equation_cumulants}

\begin{lem}\label{lem:flow_E_general}
Let $n\in\bN_+$, $i_1\in\bN_0$, $m_1,\ldots,m_n\in\bN_0$, $a_1\in\frM^{m_1}$, $r_1\in\{0,1,2\}$ and $I\equiv \{2,\ldots,n\}$. For any random  distributions $\zeta_q\in\cD^{m_q}$, $q\in I$, the cumulant
\begin{equation}
 \llangle 
 \partial_\sIR \partial_\uv^{r_1} F^{i_1,m_1,a_1}_{\uv,\sIR},
 (\zeta_q)_{q\in I}\rrangle \in \cD^{(m_1,\ldots,m_n)}_\rt
\end{equation}
is a linear combination of the expressions
\begin{multline}\label{eq:flow_E_general}
 \sum_{\substack{I_1,I_2\subset I\\I_1\cup I_2= I\\I_1\cap I_2=\emptyset}}
 \fY_\pi\fB\big(\cX^c\partial_\uv^u\partial_\sIR G_{\uv,\sIR},
 \llangle
 \partial_\uv^v F^{j,1+k,b}_{\uv,\sIR},(\zeta_q)_{q\in I_1}\rrangle
 ,\llangle
 \partial_\uv^{w} F^{i_1-j,m_1-k,d}_{\uv,\sIR},(\zeta_q)_{q\in I_2}
 \rrangle\big)
 \\+
 \fY_\pi\fA\big(\cX^c\partial_\uv^u\partial_\sIR G_{\uv,\sIR},
 \llangle
 \partial_\uv^v F^{j,1+k,b}_{\uv,\sIR},
 (\zeta_q)_{q\in I},
 \partial_\uv^w F^{i_1-j,m_1-k,d}_{\uv,\sIR}
 \rrangle\big),
\end{multline}
where $\pi\in\cP_{m_1}$, $j\in\{1,\ldots,i_1\}$, $k\in\{0,\ldots,m_1\}$, $u,v,w\in\bN_0$, $u+v+w=r_1$ and $b,c,d$ are multi-indices satisfying the condition $b+c+d=\pi(a_1)$ whose meaning is explained below Eq.~\eqref{eq:poly_binom}. The coefficients of the above linear combination depend only on $m_1$, $k$, $a,b,c,d$ and $u,v,w$. We used the notation introduced in Def.~\ref{dfn:notation_cumulants_distributions}.
\end{lem}
\begin{proof}
The statement follows immediately from Eqs.~\eqref{eq:flow_deterministic_i_m_a} and \eqref{eq:cumulants_product}.
\end{proof}

\begin{lem}\label{lem:flow_E_form_bound}
Let $\vJ\equiv (\vJ_1,\ldots,\vJ_n)=((i_1,m_1,a_1,s_1,r_1),\ldots,(i_n,m_n,a_n,s_n,r_n))$ be a list of indices such that $s_l=1$ for some $l\in\{1,\ldots,n\}$.
\begin{enumerate}
\item[(A)]
The distribution $E^\vJ_{\uv,\sIR}$ can be expressed as a linear combination of distributions of the form
\begin{equation}
 \fY^\omega\fY_\pi\fA\big(\cX^c\partial_\uv^u\partial_\sIR G_{\uv,\sIR},E^{\vK}_{\uv,\sIR}\big)
 \qquad
 \textrm{or}
 \qquad
 \fY^\omega\fY_\pi\fB\big(\cX^c\partial_\uv^u\partial_\sIR G_{\uv,\sIR},
 E^{\vL}_{\uv,\sIR},
 E^{\vM}_{\uv,\sIR}
 \big),
\end{equation}
where $u\in\{0,\ldots,r_l\}$, $c\in\frM$ is some multi-index, $\vK$, $\vL$, $\vM$ are some lists of indices and $\omega\in\cP_n$, $\pi\in\cP_{m_l}$ are some permutations.

\item[(B)] The lists of indices $\vK$, $\vL$, $\vM$ satisfy the conditions
\begin{equation} 
\begin{split}
&\rn(\vK)=\rn(\vJ)+1,\\
&\rri(\vK)=\rri(\vJ),\\
&\rrm(\vK)=\rrm(\vJ)+1,\\
&\ra(\vK)+|c|=\ra(\vJ),\\
&\rs(\vK)=\rs(\vJ)-1,\\
&\rr(\vK)=\rr(\vJ)-u,\\
&\varrho_\varepsilon(\vJ) -\sigma = \varrho_\varepsilon(\vK)-\rdim -2\varepsilon,
\end{split}
~\textrm{or}\qquad
\begin{split}
&\rn(\vL)+\rn(\vM)=\rn(\vJ)+1,\\
&\rri(\vL)+\rri(\vM)=\rri(\vJ),\\
&\rrm(\vL)+\rrm(\vM)=\rrm(\vJ)+1,\\ 
& \ra(\vL)+\ra(\vM)+|c|=\ra(\vJ),\\
&\rs(\vL)+\rs(\vM)=\rs(\vJ)-1,\\
&\rr(\vL)+\rr(\vM)=\rr(\vJ)-u,\\
&\varrho_\varepsilon(\vJ) -\sigma = \varrho_\varepsilon(\vL)+\varrho_\varepsilon(\vM)-2\varepsilon.
\end{split}
\end{equation}

\item[(C)] Fix some $\oo\in\bN_0$. Suppose that the bound
\begin{equation}
 \|K^{\ast\oo,\otimes(\rn(\vI)+\rrm(\vI))}_\sIR\ast E^\vI_{\uv,\sIR}\|_{\cV^{\mathsf{m}(\vI)}}
 \\
 \lesssim [\uv]^{(\varepsilon-\sigma)\rr(\vI)}
 [\sIR]^{\varrho_\varepsilon(\vI)-\sigma\rs(\vI)+(\rn(\vI)-1)\rdim}
\end{equation}
holds uniformly in $\uv,\sIR\in(0,1/2]$ for all lists of indices \mbox{$\vI\in\{\vK,\vL,\vM\}$}, where $\vK,\vL,\vM$ are arbitrary lists of indices satisfying the conditions specified in Part~(B) given the list of indices $\vJ$. Then the above bound holds uniformly in $\uv,\sIR\in(0,1/2]$ for $\vI=\vJ$.
\end{enumerate}
\end{lem}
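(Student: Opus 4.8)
The plan is to obtain Part~(A) directly from the cumulant flow equation, to read off Part~(B) as bookkeeping, and to concentrate the actual work in Part~(C), which is a term-by-term estimate. For Part~(A) I would fix an index $l$ with $s_l=1$. Since the scalar joint cumulant is invariant under reordering of its arguments, there is a permutation $\omega\in\cP_n$ with $E^\vJ_{\uv,\sIR}=\fY^\omega E^{\vJ'}_{\uv,\sIR}$, where $\vJ'$ is $\vJ$ with the $l$-th index moved to the first slot. Writing $\partial_\sIR^{s_l}\partial_\uv^{r_l}F^{i_l,m_l,a_l}_{\uv,\sIR}=\partial_\uv^{r_l}\partial_\sIR F^{i_l,m_l,a_l}_{\uv,\sIR}$ and setting $\zeta_q:=\partial_\sIR^{s_q}\partial_\uv^{r_q}F^{i_q,m_q,a_q}_{\uv,\sIR}\in\cD^{m_q}$ for $q\neq l$, the distribution $E^{\vJ'}_{\uv,\sIR}$ is exactly the left-hand side of~\eqref{eq:flow_E_general}, so Lemma~\ref{lem:flow_E_general} expresses it as a finite linear combination of terms $\fY_\pi\fA(\cX^c\partial_\uv^u\partial_\sIR G_{\uv,\sIR},E^\vK_{\uv,\sIR})$ and $\fY_\pi\fB(\cX^c\partial_\uv^u\partial_\sIR G_{\uv,\sIR},E^\vL_{\uv,\sIR},E^\vM_{\uv,\sIR})$. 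By Def.~\ref{dfn:notation_cumulants_distributions} the cumulants of derivatives of force coefficients appearing on the right of~\eqref{eq:flow_E_general} are themselves $E$-distributions: in the $\fA$-case $\vK$ is obtained by prepending $(j,1+k,b,0,v)$ and appending $(i_l-j,m_l-k,d,0,w)$ to the spectator indices, and in the $\fB$-case $\vL,\vM$ are the two halves determined by the splitting $I_1\cup I_2=I$. Composing with $\fY^\omega$, which merges into $\fY^\omega\fY_\pi$ after relabelling, gives the asserted form.

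Part~(B) is then pure bookkeeping for this substitution, using $u+v+w=r_l$ and $b+c+d=\pi(a_l)$: the distinguished entry is replaced by two force-coefficient entries, so $\rn$ goes up by one; $\rri$ is conserved since $j+(i_l-j)=i_l$; $\rrm$ goes up by one since $(1+k)+(m_l-k)=m_l+1$; $\ra$ drops by $|c|$, the multi-index $c$ migrating onto the kernel; $\rs$ drops by one, the $\partial_\sIR$ being consumed; and $\rr$ drops by $u$. The identity for $\varrho_{3\varepsilon}$ is the cumulant analogue of the arithmetic identity used in the proof of Theorem~\ref{thm:deterministic_taylor}, with $\varepsilon$ replaced by $3\varepsilon$, namely $\varrho_{3\varepsilon}(i,m)+|a|-\sigma=\varrho_{3\varepsilon}(j,1+k)+|b|+|c|+\varrho_{3\varepsilon}(i-j,m-k)+|d|-3\varepsilon$ (which holds because $\dim(\varPhi)=\rdim/2-\sigma$), summed over the spectator entries.

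For Part~(C) I would apply $K^{\ast\oo,\otimes(\rn(\vJ)+\rrm(\vJ))}_\sIR\ast$ to the decomposition of Part~(A); the convolution commutes with $\fY^\omega$ and $\fY_\pi$ and, by Remark~\ref{lem:fA_fB_Ks}, can be pushed into $\fA$ and $\fB$, replacing the kernel by $\fP^{2\oo}_\sIR\cX^c\partial_\uv^u\partial_\sIR G_{\uv,\sIR}$ and raising the smoothing exponents inside the cumulant arguments (harmless, since the assumed bounds for $E^\vK,E^\vL,E^\vM$ hold for some $\oo$ and hence for all larger ones). Since the periodic white noise is stationary, every cumulant in sight is translationally invariant, so by Remark~\ref{rem:translational_permutation} together with the norm-one property of $\fY_\pi$ these maps do not increase the $\sV^{\mathsf{m}}$-norm. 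Each term is then estimated by Lemma~\ref{lem:fA_fB_bounds}: a $\fB$-term by $\|\fB(G',W,U)\|_{\sV^{\mathsf{m}}}\le\|G'\|_\cK\,\|W\|\,\|U\|$ together with $\|\fP^{2\oo}_\sIR\cX^c\partial_\uv^u\partial_\sIR G_{\uv,\sIR}\|_\cK\lesssim[\uv]^{(\varepsilon-\sigma)u}[\sIR]^{|c|-\varepsilon u}$ (the kernel estimate stated before Theorem~\ref{thm:deterministic_taylor}) and the assumed bounds for $E^\vL,E^\vM$; an $\fA$-term by $\|\fA(G',V)\|_{\sV^{\mathsf{m}}}\le\|\fT G'\|_\sV\,\|V\|$ with $\|\fT\fP^{2\oo}_\sIR\cX^c\partial_\uv^u\partial_\sIR G_{\uv,\sIR}\|_\sV\lesssim[\sIR]^{-\rdim}[\uv]^{(\varepsilon-\sigma)u}[\sIR]^{|c|-\varepsilon u}$ (Remark~\ref{lem:fA_fB_Ks} and Lemma~\ref{lem:kernel_simple_fact}(B)) and the assumed bound for $E^\vK$. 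Collecting the powers of $[\uv]$ and $[\sIR]$ via Part~(B), and using $\sIR\le1/2$, yields the claim.

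I expect the exponent accounting in Part~(C) to be the main obstacle, where two cancellations must be verified. In the $\fA$-case the factor $[\sIR]^{-\rdim}$ produced by periodizing the kernel is compensated exactly by the extra $[\sIR]^{\rdim}$ that the bound for $E^\vK$ carries because $\rn(\vK)=\rn(\vJ)+1$. And the $\varepsilon$-budget must close: the kernel loses only $[\sIR]^{-\varepsilon u}$ with $u\le 2<3$, while the bounds are stated with the larger exponent $\varrho_{3\varepsilon}$ rather than $\varrho_{\varepsilon}$, so that after combining one is left with a surplus $[\sIR]^{(3-u)\varepsilon+|c|}$ whose exponent is non-negative; since $\sIR\le1/2<1$, such a surplus only strengthens the estimate and the argument goes through.
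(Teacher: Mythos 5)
Your proof follows the paper's own argument essentially step by step. For Part~(A), the paper fixes the notation by assuming without loss of generality that $l=1$, so that $\omega$ is trivial; you instead leave $l$ arbitrary and move it to the first slot with an explicit $\fY^\omega$. These are equivalent. For Part~(C), the paper's proof is terse (``use Parts~(A), (B), Lemma~\ref{lem:fA_fB_bounds}, Remark~\ref{lem:fA_fB_Ks} and Lemma~\ref{lem:kernel_G}''); you spell out the exponent accounting, including the crucial cancellation of $[\sIR]^{-\rdim}$ from $\|\fT\fP_\sIR^{2\oo}\Cdot\|_\sV$ against the extra $[\sIR]^{\rdim}$ coming from $\rn(\vK)=\rn(\vJ)+1$, and the use of $3\varepsilon$ versus $\varepsilon$ in the kernel bound. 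You also correctly identify the kernel estimate as the unnamed lemma preceding Theorem~\ref{thm:deterministic_taylor} (the one including $\cX^a$), which is the one actually needed here.

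One small slip in the final accounting: after combining the factor $[\sIR]^{|c|-\varepsilon u}$ from the kernel bound with the arithmetic identity
$\varrho_{3\varepsilon}(i,m)+|a|-\sigma=\varrho_{3\varepsilon}(j,1+k)+|b|+|c|+\varrho_{3\varepsilon}(i-j,m-k)+|d|-3\varepsilon$,
the $|c|$ from the kernel cancels against the $-|c|$ hidden in $\varrho_{3\varepsilon}(\vK)-\varrho_{3\varepsilon}(\vJ)$ (equivalently, in $\varrho_{3\varepsilon}(\vL)+\varrho_{3\varepsilon}(\vM)-\varrho_{3\varepsilon}(\vJ)$), so the surplus in $[\sIR]$ is $(3-u)\varepsilon$, not $(3-u)\varepsilon+|c|$. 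Since both exponents are non-negative for $u\leq 2$ this does not affect the conclusion.
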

\begin{proof}
Without loss of generality we can assume that $l=1$. Part (A) of the lemma follows immediately from Lemma~\ref{lem:flow_E_general} applied with
\begin{equation}
 \zeta_q\equiv \partial_\sIR^{s_q}\partial_\uv^{r_q} F^{i_q,m_q,a_q}_{\uv,\sIR}, 
 \qquad q\in\{2,\ldots,n\}.
\end{equation}
The multi-index $c\in\frM$, $u\in\{0,\ldots,r_1\}$ and the permutation $\pi\in\cP_{m_1}$ in the statement coincide with the respective objects in Eq.~\eqref{eq:flow_E_general}. The permutation $\omega\in\cP_n$ is trivial because of the assumption $l=1$. Moreover, it holds that
\begin{equation}
\begin{gathered}
 \vK=((j,k+1,b,0,v),\,\vJ_2,\ldots,\vJ_n,\,(i_1-j,m_1-k,d,0,w)),
 \\
 \vL= (j,k+1,b,0,v)\sqcup (\vJ_q)_{q\in I_1},
 \qquad
 \vM =(i_1-j,m_1-k,d,0,w) \sqcup (\vJ_q)_{q\in I_2},
\end{gathered} 
\end{equation}
where $\sqcup$ denotes the concatenation of lists, $I_1\cup I_2=I=\{2,\ldots,n\}$, $I_1\cap I_2=\emptyset$ and $j\in\{1,\ldots,i_1\}$, $k\in\{0,\ldots,m_1\}$, $b\in\frM^{1+k}$, $d\in\frM^{m-k}$, $v,w\in\{0,\ldots,r_1\}$ coincide with the respective objects in Eq.~\eqref{eq:flow_E_general}. This implies that the lists $\vK$, $\vL$, $\vM$ satisfy the conditions given in Part (B). The last condition follows from Def.~\ref{dfn:varrho_I} and $\dim(\xi)+\dim(\Phi)=\rdim-\sigma$. To prove Part (C) we use Parts (A), (B), Lemma~\ref{lem:fA_fB_bounds}, Remark~\ref{lem:fA_fB_Ks} and Lemma~\ref{lem:kernel_G} applied with $r\in\{0,1,2\}$.
\end{proof}

\section{Uniform bounds for cumulants}\label{sec:cumulants_uniform_bounds}

\begin{thm}\label{thm:cumulants}
There exists a unique choice of the counterterms $c^{[i]}_\uv$ in Eq.~\eqref{eq:force} such that:
$\bE f^{1,3,0}_{\uv,1/2}=1$, $\bE f^{i,1,0}_{\uv,1/2}=\mathfrak{f}^{[i]}$, $i\in\{1,\ldots,i_\sharp\}$, and $\bE f^{i,m,a}_{\uv,1/2} = 0$ for all other $i\in\bN_+$, $m\in\bN_0$, $a\in\frM^m$ such that $\varrho(i,m)+|a|\leq 0$. Fix arbitrary \mbox{$\mathfrak{f}^{[i]}\in\bR$}, $i\in\{1,\ldots,i_\sharp\}$. For all list of indices
$\vI$ there exists \mbox{$\oo\in\bN_0$} such that the bound
\begin{equation}\label{eq:thm_cumulants}
 \|K^{\ast\oo,\otimes(n+m)}_{\sIR}\ast E^\vI_{\uv,\sIR}\|_{\cV^{\mathsf{m}}}
 \lesssim 
 [\uv]^{(\varepsilon-\sigma)\rr(\vI)}
 [\sIR]^{\varrho_\varepsilon(\vI)-\sigma\rs(\vI)+(n-1)\rdim}
\end{equation}
holds uniformly in $\uv,\sIR\in(0,1/2]$, where $n=\rn(\vI)$, $\mathsf{m}=\mathsf{m}(\vI)$, \mbox{$m=\rrm(\vI)$}. Moreover, the following condition is satisfied
\begin{equation}
 E^\vI_{\uv,\sIR}(x_1,\ldots,x_n;\ry_1,\ldots,\ry_n) = (-1)^{\ra(\vI)}
 E^\vI_{\uv,\sIR}(-x_1,\ldots,-x_n;-\ry_1,\ldots,-\ry_n),
\end{equation}
where $x_j\in\bM$, $\ry_j\in\bM^{m_j}$ for $j\in\{1,\ldots,n\}$ and $E^\vI_{\uv,\sIR}=0$ unless $m+n\in2\bN_0$.
\end{thm}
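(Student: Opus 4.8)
The plan is to fix the counterterms, prove the bound~\eqref{eq:thm_cumulants}, and establish the two symmetry relations, with the engine for the bound being the cumulant flow equation of Lemma~\ref{lem:flow_E_general} in the packaged form of Lemma~\ref{lem:flow_E_form_bound}. The parity statement is logically first, since it is needed to see that the renormalization conditions are consistent. Both symmetry relations follow by induction on the flow equation~\eqref{eq:flow_deterministic_i_m_a}, using that the law of white noise is invariant under $\xi\mapsto-\xi$ and under $\xi\mapsto\xi(-\Cdot)$, that $G$ is even, that $\chi_\uv,\chi_\sIR$ are radial, and that the counterterms are deterministic. One checks that $F^{i,m,a}_{\uv,\sIR}[-\xi]=(-1)^{m+1}F^{i,m,a}_{\uv,\sIR}[\xi]$: it holds for the boundary coefficients $F^{0,0}_\uv=\xi$, $F^{1,3}_\uv$, $F^{i,1}_\uv=c^{[i]}_\uv\delta_\bM$, and is propagated by~\eqref{eq:flow_deterministic_i_m_a} because $\fB$ does not change the degree in $\xi$ and $((1+k)+1)+((m-k)+1)\equiv m+1\pmod2$; similarly $F^{i,m,a}_{\uv,\sIR}[\xi(-\Cdot)](-x;-\ry)=(-1)^{|a|}F^{i,m,a}_{\uv,\sIR}[\xi](x;\ry)$, the factor $\cX^c$ in~\eqref{eq:flow_deterministic_i_m_a} contributing $(-1)^{|c|}$ with $|b|+|c|+|d|=|a|$. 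Taking joint cumulants and invoking the distributional invariances yields $E^\vI_{\uv,\sIR}=(-1)^{m+n}E^\vI_{\uv,\sIR}$ and $E^\vI_{\uv,\sIR}(-\Cdot)=(-1)^{\ra(\vI)}E^\vI_{\uv,\sIR}(\Cdot)$. In particular $\bE f^{i,m,a}_{\uv,1/2}=0$ automatically whenever $m$ is even.

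\textbf{The counterterms.} By~\eqref{eq:flow_deterministic_i_m_a} the coefficient $F^{i,m}_{\uv,\sIR}$ depends only on the $F^{j,\Cdot}_{\uv,\sIR}$ with $j<i$ and on the $F^{i,m'}_{\uv,\sIR}$ with $m'>m$, so $c^{[i]}_\uv$, which enters only through the boundary term $F^{i,1}_\uv$, appears in $\bE f^{i,1,0}_{\uv,1/2}$ affinely, with unit leading coefficient and constant term depending only on $c^{[1]}_\uv,\ldots,c^{[i-1]}_\uv$. Hence, solving recursively over $i\in\{1,\ldots,i_\sharp\}$, there is a unique choice of the $c^{[i]}_\uv$ with $\bE f^{i,1,0}_{\uv,1/2}=\mathfrak f^{[i]}$; by the parity above, the remaining relevant conditions (those with $m$ even, and $\bE f^{1,3,0}_{\uv,1/2}=1$) then hold for free, so the counterterms are pinned. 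With this choice all boundary values $\bE f^{i,m,a}_{\uv,1/2}$ of relevant coefficients are $\uv$-independent constants (namely $1$, $\mathfrak f^{[i]}$, or $0$).

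\textbf{The bound~\eqref{eq:thm_cumulants}.} I would argue by induction on $\rri(\vI)$ and, for fixed $\rri(\vI)=\rri_\circ$, by downward induction on $\rrm(\vI)$ (well founded because $E^\vI_{\uv,\sIR}=0$ once $\rrm(\vI)>3\rri(\vI)$). For $\rri_\circ=0$ everything is explicit: $E^\vI_{\uv,\sIR}$ is a joint cumulant of copies of $\xi$, hence vanishes unless $\rn(\vI)\le2$ with no derivatives, and the white-noise covariance satisfies the bound directly, its smoothed $\sV^{\mathsf m}$-norm being $\le\|K^{\ast2\oo}_\sIR\|_\cK=1\lesssim[\sIR]^{-6\varepsilon}$. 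For $\rri_\circ\ge1$, inside the step $(\rri_\circ,\rrm_\circ)$ I would treat in order: \emph{(1)} all $\vI$ with $\rs(\vI)\ge1$, by applying Lemma~\ref{lem:flow_E_form_bound}(C) — every auxiliary list $\vK,\vL,\vM$ it produces has $\rri<\rri_\circ$, or $\rri=\rri_\circ$ and $\rrm=\rrm_\circ+1$, or is a pure-$\xi$ list, hence is covered by the induction hypothesis (terms with a factor $F^{0,m'}_{\uv,\sIR}$, $m'>0$, vanish); \emph{(2)} all $\vI$ with $\rs(\vI)=0$ that are irrelevant, where $E^\vI_{\uv,0}=0$ (the bare coefficients are deterministic apart from $\xi$, and the only nonzero cumulant built from them is the relevant white-noise covariance), so $E^\vI_{\uv,\sIR}=\int_0^\sIR\partial_\uIR E^\vI_{\uv,\uIR}\,\rd\uIR$; since $\uIR\le\sIR$ gives $\|K^{\ast\oo,\otimes(\rn+m)}_\sIR\ast\Cdot\|_{\sV^{\mathsf m}}\le\|K^{\ast\oo,\otimes(\rn+m)}_\uIR\ast\Cdot\|_{\sV^{\mathsf m}}$ by Lemma~\ref{lem:kernel_u_v}, the integrand (which has $\rs=1$) is controlled by case~(1), and since $\varrho_{3\varepsilon}(\vI)+(\rn(\vI)-1)\rdim>0$ by Remark~\ref{rem:rho2} the integral converges and produces exactly $[\sIR]^{\varrho_{3\varepsilon}(\vI)+(\rn(\vI)-1)\rdim}$; \emph{(3)} all $\vI$ with $\rs(\vI)=0$ that are relevant, when $\rri_\circ\ge1$ forces $\rn(\vI)=1$ and $E^\vI_{\uv,\sIR}=\bE\partial_\uv^r F^{\rri_\circ,\rrm_\circ,a}_{\uv,\sIR}$. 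The key point is that $\bE\partial_\uv^r f^{\rri_\circ,\rrm_\circ,b}_{\uv,\sIR}$ is, by translation invariance, a constant function of $x$, so its $K^{\ast\oo}_\sIR$-smoothing has $\sIR$-independent $\sV$-norm; applying $\fI$ to case~(1) bounds $\partial_\uIR\bE\partial_\uv^r f^{\rri_\circ,\rrm_\circ,b}_{\uv,\uIR}$, and integration from the constant boundary value $\bE\partial_\uv^r f^{\rri_\circ,\rrm_\circ,b}_{\uv,1/2}$ gives $\|K^{\ast\oo}_\sIR\ast\bE\partial_\uv^r f^{\rri_\circ,\rrm_\circ,b}_{\uv,\sIR}\|_\sV\lesssim[\uv]^{(\varepsilon-\sigma)r}[\sIR]^{\varrho_{3\varepsilon}(\rri_\circ,\rrm_\circ)+|b|}$ for all $b$ (the integral converges since $\varrho_{3\varepsilon}(\rri_\circ,\rrm_\circ)+|b|<0$ strictly by Remark~\ref{rem:rho2}). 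If $\rrm_\circ=0$ this is the claim; if $\rrm_\circ\ge1$ I would use the Taylor identity $F^{\rri_\circ,\rrm_\circ,a}_{\uv,\sIR}=\fX^a_\ro(f^{\rri_\circ,\rrm_\circ,b}_{\uv,\sIR},F^{\rri_\circ,\rrm_\circ,b}_{\uv,\sIR})$ of Theorem~\ref{thm:taylor} with $\ro$ minimal such that $\varrho(\rri_\circ,\rrm_\circ)+\ro>0$: the terms $F^{\rri_\circ,\rrm_\circ,b}_{\uv,\sIR}$ with $|b|=\ro$ are irrelevant, hence controlled in case~(2) of the same step, and Theorem~\ref{thm:taylor_bounds} applied with $C\lesssim[\uv]^{(\varepsilon-\sigma)r}[\sIR]^{\varrho_{3\varepsilon}(\rri_\circ,\rrm_\circ)}$ gives the required bound. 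Each step enlarges $\oo$ by a bounded amount, so a finite $\oo=\oo(\vI)$ always works.

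\textbf{Main obstacle.} The delicate part is not any individual estimate but the scheduling of the induction: one must order the cases so that every list produced by Lemma~\ref{lem:flow_E_form_bound} and by the Taylor identity has already been treated, and one must reconcile tuning the relevant coefficients only at $\sIR=1/2$ with the need to bound them at all $\sIR\in(0,1/2]$. The resolution rests on two features: the relevant $\rn=1$ cumulants of the $f$-coefficients are constant functions, so their $K^{\ast\oo}_\sIR$-norm does not see $\sIR$ and backward integration from $\sIR=1/2$ is harmless, while the $F$-coefficient versions with external legs are reduced to them through the Taylor remainder; and the strict negativity of $\varrho_{3\varepsilon}$ — in contrast to $\varrho$, which can vanish — guarantees convergence of every scale integral and absorbs the logarithms that appear in the borderline case $\sigma=\rdim/2$.
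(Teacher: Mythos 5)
Your proposal is correct and follows essentially the same route as the paper: induction on $\rri(\vI)$ with a downward induction on $\rrm(\vI)$, the split into the cases $\rs(\vI)\ge1$ (flow equation via Lemma~\ref{lem:flow_E_form_bound}), $\rs(\vI)=0$ irrelevant (forward integration from $\sIR=0$, using $E^\vI_{\uv,0}=0$), and $\rs(\vI)=0$ relevant (backward integration of $\fI(E^\vI)$ from $\sIR=1/2$ followed by the Taylor identity of Theorem~\ref{thm:taylor} and Theorem~\ref{thm:taylor_bounds}), and the observation that the strict inequality $\varrho_{3\varepsilon}<\varrho$ makes all scale integrals convergent. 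The only genuine difference is one of presentation: the paper relegates the parity argument and the recursive, affine determination of $c^{[i]}_\uv$ from $\mathfrak f^{[i]}$ to the remark following the theorem (stating the explicit formula $c^{[i]}_\uv=\mathfrak f^{[i]}-\int_0^{1/2}\bE\,\partial_\sIR f^{i,1,0}_{\uv,\sIR}\,\rd\sIR$), whereas you spell them out inside the proof; your symmetry argument by induction on the flow equation is a slightly more explicit version of the paper's appeal to the invariances $\varPhi\mapsto-\varPhi$, $\xi\mapsto-\xi$ and $x\mapsto-x$.
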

\begin{rem}
By stationarity $\bE f^{i,m,a}_{\uv,\sIR}(x)$ is a constant. Since $\partial_\sIR F^{1,3}_{\uv,\sIR}=0$ it holds that $\bE f^{1,3,0}_{\uv,1/2}=f^{1,3,0}_{\uv,0}=1$. There is no distinguished value of $\bE f^{i,1,0}_{\uv,1/2}=\mathfrak{f}^{[i]}$, $i\in\{1,\ldots,i_\sharp\}$, ultimately, because there is no distinguished function $\chi$ in Def.~\ref{dfn:kernel_G} of the cutoff propagator $G_\uv$. The vanishing of $\bE f^{i,m,a}_{\uv,1/2}$ for all other $i\in\bN_+$, $m\in\bN_0$, $a\in\frM^m$ such that $\varrho(i,m)+|a|\leq 0$ is enforced by the properties of the cumulants given in the last sentence of the theorem (here we use the assumption $\rdim\in\{1,\ldots,6\}$). Observe that these properties are consequences of the following symmetries of Eq.~\eqref{eq:intro_mild}: $\varPhi(x)\mapsto-\varPhi(x)$, $\xi(x)\mapsto-\xi(x)$ and $\varPhi(x)\mapsto\varPhi(-x)$, $\xi(x)\mapsto\xi(-x)$, which in particular preserve the law of $\xi$. The counterterms $c^{[i]}_\uv$ are related to the renormalization parameters $\mathfrak{f}^{[i]}$ by the formula
\begin{equation}
 c^{[i]}_\uv:=f^{i,1,0}_{\uv}=f^{i,1,0}_{\uv,0} = \mathfrak{f}^{[i]} -\int_0^{1/2} \bE\,\partial_\sIR f^{i,1,0}_{\uv,\sIR}\,\rd\sIR,\qquad i\in\{1,\ldots,i_\sharp\}.
\end{equation}
The constants $\mathfrak{f}^{[i]}\in\bR$, $i\in\{1,\ldots,i_\sharp\}$ parametrize the class of solutions of Eq.~\eqref{eq:intro_mild} constructed in the paper (generically this is an over-parametrization).
\end{rem}

\begin{proof}
We first note that the theorem is trivially true for all list of indices $\vI$ such that $\rrm(\vI)>3\rri(\vI)$ since then $E^\vI_{\uv,\sIR}=0$. The rest of the proof is by induction.

\textit{The base case:} 
Consider a list of indices $\vI$ such that \mbox{$\rri(\vI)=0$}. In this case the cumulants $E^\vI_{\uv,\sIR}$ coincide with the cumulants of the white noise $\xi$. The only non-vanishing cumulant is the covariance corresponding to $\rn(\vI)=2$, $\mathsf{m}(\vI)=(0,0)$, $\rrm(\vI)=0$, $\ra(\vI)=0$, $\rr(\vI)=0$ and $\rs(\vI)=0$. It holds that
\begin{equation}
 \|\llangle K_\sIR^{\ast\oo}\ast\xi,K_\sIR^{\ast\oo}\ast\xi\rrangle\|_{\cV^{\mathsf{m}}} \leq \sup_{x_1\in\bT}\int_{\bT}|\bE(\xi(x_1)\xi(\rd x_2))| = 1.
\end{equation}
This finishes the proof of the base case.

\textit{Induction step:}
Fix $i\in\bN_+$ and $m\in\bN_0$. Assume that the theorem is true for all lists of indices $\vI$ such that either $\rri(\vI)<i$, or $\rri(\vI)=i$ and $\rrm(\vI)>m$. We shall prove the theorem for all $\vI$ such that $\rri(\vI)=i$ and $\rrm(\vI)=m$. 

Consider the case $\rs(\vI)> 0$. In this case by Lemma~\ref{lem:flow_E_form_bound}~(A) and (B) the cumulants $E^\vI_{\uv,\sIR}$ can be expressed in terms of the cumulants for which the statement of the theorem has already been established. As a result, the bound~\eqref{eq:thm_cumulants} with $\rs(\vI)> 0$ follows from the inductive assumption and Lemma~\ref{lem:flow_E_form_bound}~(C).

Now consider $\vI = ((i_1,m_1,a_1,0,r_1),\ldots,(i_n,m_n,a_n,0,r_n))$, $\rs(\vI)=0$. It follows from Def.~\ref{dfn:cumulants_eff_force} of the cumulants $E^\vI_{\uv,\sIR}$ that 
\begin{equation}\label{eq:thm_cumulants_ind_step}
 E^\vI_{\uv,\sIR}
 = 
 E^\vI_{\uv,0} + \sum_{q=1}^n \int_0^\sIR E^{\vI_q}_{\uv,\uIR}\,\rd\uIR,
 \quad
 E^\vI_{\uv,\sIR}
 = 
 E^\vI_{\uv,1/2} - \sum_{q=1}^n \int_\sIR^{1/2} E^{\vI_q}_{\uv,\uIR}\,\rd\uIR,
\end{equation}
where $\vI_q = ((i_1,m_1,a_1,0,r_1),\ldots, (i_q,m_q,a_q,1,r_q),\ldots,(i_n,m_n,a_n,0,r_n))$. Note that $\rs(\vI_q)=1$, hence the bound~\eqref{eq:thm_cumulants} has already been established for $E^{\vI_q}_{\uv,\uIR}$. We will use the first of Eqs.~\eqref{eq:thm_cumulants_ind_step} to bound the irrelevant cumulants $E^\vI_{\uv,\sIR}$, i.e. those with $\vI$ such that $\varrho(\vI)+(\rn(\vI)-1)\rdim>0$. The second equality will be used to bound certain contributions to the relevant cumulants $E^\vI_{\uv,\sIR}$, i.e. those with $\vI$ such that $\varrho(\vI)+(\rn(\vI)-1)\rdim\leq0$.

First, let us analyze the irrelevant contributions. If $\rn(\vI)>1$, then $E^\vI_{\uv,0}$ is a joint cumulant of a list of at least two random distributions. Since $\rri(\vI)=i>0$ one of the elements of this list is a deterministic distribution of the form $\partial^r_\uv F^{i,m,a}_{\uv,0}$. Hence, the cumulant vanishes. If $\rn(\vI)=1$ and $\varrho(\vI)>0$, then $E^\vI_{\uv,0}$ coincides with $\partial^r_\uv F^{i,m,a}_{\uv,0}=\partial^r_\uv F^{i,m,a}_{\uv\phantom{0}}$ for some $i,m,a$ such that $\varrho(i,m)+|a|>0$ and consequently vanishes. To prove the bound for $E^\vI_{\uv,\mu}$ we use the first of Eqs.~\eqref{eq:thm_cumulants_ind_step}. As we argued above, the first term on the RHS of this equation vanishes. The claim of the theorem is a consequence of the bound
\begin{equation}\label{eq:K_bound_s_u}
 \|K_\sIR^{\ast\oo,\otimes(n+m)}\ast E^\vI_{\uv,\sIR}\|_{\cV^{\mathsf{m}}}\leq 
 \sum_{q=1}^n\int_0^\sIR \|K_\uIR^{\ast\oo,\otimes(n+m)}\ast E^{\vI_q}_{\uv,\uIR}\|_{\cV^{\mathsf{m}}}\,\rd\uIR.
\end{equation}
We used the fact that $\|K_\sIR^{\ast\oo,\otimes(n+m)}\ast E^{\vI_q}_{\uv,\uIR}\|_{\cV^{\mathsf{m}}}\leq \|K_\uIR^{\ast\oo,\otimes(n+m)}\ast E^{\vI_q}_{\uv,\uIR}\|_{\cV^{\mathsf{m}}}$ for $\uIR\leq\sIR$ which follows from  Lemma~\ref{lem:kernel_u_v} and Remark~\ref{rem:Vm_K2}.

Next, let us analyze the relevant contributions. We note that the inequality $\varrho(\vI)+(\rn(\vI)-1)\rdim\leq0$ implies \mbox{$\rn(\vI)=1$}. Consequently, $\vI=(i,m,a,0,r)$ for some $r\in\{0,1,2\}$ and $a\in\frM$ such that $\varrho(i,m)+|a|\leq0$. Hence, $E^\vI_{\uv,\sIR}=\bE\,\partial^r_\uv F^{i,m,a}_{\uv,\sIR}$. We first study
\begin{equation}
 \bE\, \partial_\uv^r f^{i,m,a}_{\uv,\sIR}
 =\fI( \bE\,\partial_\uv^r F^{i,m,a}_{\uv,\sIR})=
 \fI(K_\sIR^{\ast\oo,\otimes(1+m)}\ast E^\vI_{\uv,\sIR})\in\bR,
\end{equation}
where the map $\fI$ was introduced in Def.~\ref{dfn:map_I}. Note that by the translational invariance $\bE\, f^{i,m,a}_{\uv,\sIR}$ is a constant function. The application of the map $\fI$ to both sides of the second equation in~\eqref{eq:thm_cumulants_ind_step} yields
\begin{equation}\label{eq:thm_cumulants_f_ren}
 \bE\, \partial_\uv^r f^{i,m,a}_{\uv,\sIR}
 =
 \bE\, \partial_\uv^r f^{i,m,a}_{\uv,1/2}
 -
 \int_\sIR^{1/2} \fI(E^{\vI_1}_{\uv,\uIR} )\,\rd\uIR,
\end{equation}
where $E^{\vI_1}_{\uv,\uIR} = \bE\, \partial_\uIR\partial_\uv^r F^{i,m,a}_{\uv,\uIR}$.  Recalling that $\cV^m\equiv\cV^{\mathsf{m}}$ for $n=1$ and using Lemma~\ref{lem:map_I} we arrive at
\begin{equation}\label{eq:thm_cumulants_f_ren_bound}
 |\bE\, \partial_\uv^r f^{i,m,a}_{\uv,\sIR}|
 \leq
 |\bE\, \partial_\uv^r f^{i,m,a}_{\uv,1/2}|
 +
 \int_\sIR^{1/2} \|K^{\ast\oo,\otimes(1+m)}_{\uIR}\ast E^{\vI_1}_{\uv,\uIR}\|_{\cV^m}\,\rd\uIR.
\end{equation}
By assumption, $\bE\, f^{i,m,a}_{\uv,1/2}$ is independent of $\uv$ and $\bE\, \partial_\uv f^{i,m,a}_{\uv,1/2}=0$. Hence, using the bound~\eqref{eq:thm_cumulants_f_ren_bound} and the bound~\eqref{eq:thm_cumulants} applied to $E^{\vI_1}_{\uv,\sIR}$ we obtain
\begin{multline}\label{eq:thm_cumulants_relevant}
 |\bE\, \partial_\uv^r f^{i,m,a}_{\uv,\sIR}|=|\fI(\bE\,\partial_\uv^r F^{i,m,a}_{\uv,\sIR})|
 \lesssim 1+\int_\sIR^{1/2} [\uv]^{(\varepsilon-\sigma)r}
 [\uIR]^{\varrho_\varepsilon(i,m)+|a|-\sigma}\,\rd\uIR
 \\
 \lesssim 
 [\uv]^{(\varepsilon-\sigma)r}
 [\sIR]^{\varrho_\varepsilon(i,m)+|a|}.
\end{multline}
If $m=0$, then $a=0$ and $E^\vI_{\uv,\sIR} = \bE\, \partial^r_\uv F^{i,0,0}_{\uv,\sIR}=\bE\,\partial_\uv^r f^{i,0,0}_{\uv,\sIR}$. Hence, in this case the statement of the theorem follows from the bound~\eqref{eq:thm_cumulants_relevant}. To prove the case $m>0$ we first recall that we have already proved the bounds
\begin{equation}\label{eq:cumulants_proof_rel1}
 \|K_\sIR^{\ast\oo,\otimes(1+m)}\ast \bE\, \partial_\uv^r F^{i,m,b}_{\uv,\sIR}\|_{\cV^m} \lesssim [\uv]^{(\varepsilon-\sigma)r}
 [\sIR]^{\varrho_\varepsilon(i,m)+|b|}
\end{equation}
for all irrelevant coefficients $F^{i,m,b}_{\uv,\sIR}$ and the bounds
\begin{equation}\label{eq:cumulants_proof_rel2}
 |\fI(\bE\, \partial_\uv^r F^{i,m,b}_{\uv,\sIR})|\lesssim [\uv]^{(\varepsilon-\sigma)r}
 [\sIR]^{\varrho_\varepsilon(i,m)+|b|}
\end{equation}
for all relevant coefficients $F^{i,m,b}_{\uv,\sIR}$. Let $\ro\in\bN_+$ be the smallest positive integer such that $\varrho(i,m)+\ro>0$ and note that by Theorem~\ref{thm:taylor} it holds that
\begin{equation}
 \bE\, \partial_\uv^r F^{i,m,a}_{\uv,\sIR}
 =
 \fX^a_\ro(\fI(\bE\, \partial_\uv^r F^{i,m,b}_{\uv,\sIR}),
 \bE\, \partial_\uv^r F^{i,m,b}_{\uv,\sIR}).
\end{equation}
The arguments of the map $\fX^a_\ro$ above satisfy the bounds~\eqref{eq:cumulants_proof_rel2} and~\eqref{eq:cumulants_proof_rel1}. This together with Theorem~\ref{thm:taylor_bounds} applied with $C\lesssim [\uv]^{(\varepsilon-\sigma)r}[\sIR]^{\varrho_\varepsilon(i,m)}$ implies the statement of the theorem.
\end{proof}

\section*{Acknowledgments}

I wish to thank Markus Tempelmayr and Pavlos Tsatsoulis for discussions and useful comments. The main part of this work was carried out when the author was affiliated with the Max-Planck Institute for Mathematics in the Sciences in Leipzig. The financial support of the Max-Planck Society, grant Proj. Bez. M.FE.A.MATN0003 and partial support of the National Science Centre, Poland, grant `Sonata Bis' 2019/34/E/ST1/00053 are gratefully acknowledged.


\begin{thebibliography}{99}


\bibitem{bruned2021renormalising}
Y. Bruned, A. Chandra, I. Chevyrev, M. Hairer, 
``Renormalising SPDEs in regularity structures,'' 
J. Eur. Math. Soc. {\bf 23}(3), 869--947 (2021)
[arXiv:1711.10239]

\bibitem{bruned2019algebraic}
Y. Bruned, M. Hairer, L. Zambotti, 
``Algebraic renormalisation of regularity structures,''
Invent. Math. {\bf 215}(3), 1039--1156 (2019)
[arXiv:1610.08468]

\bibitem{chandra2016bphz}
A. Chandra, M. Hairer, 
``An analytic BPHZ theorem for regularity structures,''
[arXiv:1612.08138]

\bibitem{duch}
P. Duch,
``Flow equation approach to singular stochastic PDEs,''
[arXiv:2109.11380]


\bibitem{hairer2014structures} 
M. Hairer, 
``A theory of regularity structures,'' 
Invent. Math. {\bf 198}(2), 269--504 (2014)
[arXiv:1303.5113]

\bibitem{kupiainen2016rg}
A. Kupiainen, ``Renormalization group and stochastic PDEs,'' 
Ann. Henri Poincaré {\bf 17}(3), 497--535 (2016)
[arXiv:1410.3094]

\bibitem{kupiainen2017kpz}
A. Kupiainen, M. Marcozzi, 
``Renormalization of generalized KPZ equation,''
J. Stat. Phys. {\bf 166}, 876--902 (2017) 
[arXiv:1604.08712]

\bibitem{muller2003}
V. Müller, 
``Perturbative renormalization by flow equations,''
Rev. Math. Phys. {\bf 15}(05), 491--558 (2003)
[arXiv:hep-th/0208211]

\bibitem{peccati2011wiener}
G. Peccati, M. Taqqu,
``Wiener Chaos: Moments, Cumulants and Diagrams: A survey with computer implementation,''
(Springer, 2011)

\bibitem{polchinski1984}
J. Polchinski, 
``Renormalization and effective lagrangians,'' 
Nuclear Physics B, {\bf 231}(2), 269--295, (1984)

\bibitem{wilson1971}
K. Wilson,
``Renormalization Group and Critical Phenomena. I. Renormalization Group and the Kadanoff Scaling Picture,''
Phys. Rev. B, {\bf 4} 3174--3183, (1971)



\end{thebibliography}
\end{document}